\documentclass[11pt,letter]{article}
\usepackage[latin1]{inputenc}
\usepackage{fancyhdr}
\usepackage{indentfirst}
\usepackage{graphicx}
\usepackage{epstopdf}
\usepackage{color}
\usepackage{newlfont}
\usepackage{amssymb}
\usepackage{amsmath}
\usepackage{latexsym}
\usepackage{amsthm}
\usepackage{amscd}
\usepackage{fullpage}
\usepackage{multirow}
\addtolength{\oddsidemargin}{.00in}
\addtolength{\evensidemargin}{.00in}
\addtolength{\textwidth}{0in}
\addtolength{\topmargin}{0in}
\addtolength{\textheight}{0in}
\usepackage{hyperref}
\hypersetup{
  linkcolor=red,          
  citecolor=cyan,        
}

\theoremstyle{plain}                    
\newtheorem{theorem}{Theorem}[section]     
\newtheorem{lem}[theorem]{Lemma}            
\theoremstyle{definition}

\newtheorem{ese}[theorem]{Example}           
\newtheorem{remark}[theorem]{Remark}
\newtheorem{prop}[theorem]{Proposition}
\newtheorem{cor}[theorem]{Corollary}
\newtheorem*{maintheorem}{Main Theorem}

\newcommand{\de}{\mathrm{d}}
\newcommand{\virg}[1]{``#1''}
\newcommand{\N}{\mathbb{N}}
\newcommand{\Z}{\mathbb{Z}}
\newcommand{\Q}{\mathbb{Q}}

\newcommand{\bianco}{\textcolor{white}{.}}

\newcommand{\les}{\left[\left[}
\newcommand{\res}{\right]\right]}
\newcommand{\X}{(0,1]\smallsetminus\mathbb{Q}}
\input xy
\xyoption{all}
\linespread{1.1}
\hyphenation{}

\begin{document}
\clearpage{\pagestyle{empty}\cleardoublepage}
\title{Renewal-type Limit Theorem for Continued Fractions\\ with Even Partial Quotients}
\author{Francesco Cellarosi\footnote{Mathematics Department, Princeton University. fcellaro@math.princeton.edu}}
\date{August 5, 2008}
\maketitle
\begin{abstract}
We prove the existence of the limiting distribution for the sequence of denominators generated by continued fraction expansions with even partial quotients, which were introduced by F. Schweiger \cite{Schweiger82} \cite{Schweiger84} and studied also by C. Kraaikamp and A. Lopes \cite{Kraaikamp-Lopes96}. Our main result is proven following the strategy used by Ya. Sinai and C. Ulcigrai \cite{Sinai-Ulcigrai07} in their proof of a similar renewal-type theorem for Euclidean continued fraction expansions and the Gauss map. 
The main steps in our proof are the construction of a natural extension of a Gauss-like map and the proof of mixing of a related special flow.
\end{abstract}

\section*{Introduction}
Continued fractions with even partial quotients (ECF) were originally introduced by F. Schweiger \cite{Schweiger82} \cite{Schweiger84} and appeared to be qualitatively different from the usual
Euclidean continued fractions. In order to study ECF-expansions from a dynamical point of view, one should define a Gauss-like map $T$ and its associated jump transformation $R$, which is used to overcome the intermittent behavior of $T$. Let $\{q_n\}_{n\in\N}$ be the sequence of the denominators of the ECF-convergents. For $L>0$ define the renewal time $n_L=\min\{n\in\N:\: q_n>L\}$. Our main result is a renewal-type limit theorem which establishes the existence of a limiting distribution for $\frac{q_{n_L}}{L}$, jointly with any finite number of entries preceeding and following the renewal time $n_L$ when $L\rightarrow\infty$. 

With the help of the map $R$, we construct a subsequence $\{\hat q_n\}_{n\in\N}\subset\{q_n\}_{n\in\N}$ and define the renewal time $\hat n_L=\min\{n\in\N:\:\hat q_n>L\}$. The main result 
follows from another renewal-type theorem, which shows the existence of a limiting distribution for $\frac{\hat q_{\hat n_L}}{L}$. 
The proof of this theorem exploits the mixing properties of a suitably defined special flow over the natural extension of $R$, following
the strategy used by Ya. Sinai and C. Ulcigrai \cite{Sinai-Ulcigrai07}.

The paper is organized as follows: in Section \ref{section: preliminaries and main result} we introduce ECF-expansions comparing them with Euclidean continued fraction expansions. 
After introducing the maps $T$ and $R$ and the sequences of denominators $\{\hat q_n\}_{n\in\N}\subset\{q_n\}_{n\in\N}$, we 
formulate our two renewal-type theorems (concerning $\{q_n\}_{n\in\N}$ and $\{\hat q_n\}$ respectively) 
and present the main tools (natural extensions and special flows). 
Section \ref{section: recurrence relations and denominator estimates} is devoted to estimating the growth of the sequence $\{\hat q_n\}_{n\in\N}$.
In Section \ref{section: reduction to a special flow} we define a special flow over the natural extension of $R$ and 
show how it is 
used to approximate the logarithm of the $R$-denominators $\hat q_n$. 
The mixing property of the flow is shown in Appendix A. Further lemmata are proven in Section \ref{section: reduction to a special flow}, allowing us to ``localize'' the problem to sufficiently small cylinders. Section \ref{section: existence of limiting distribution} contains the proof of the renewal-type theorem for $\{\hat q_n\}_{n\in\N}$, which requires all previous lemmata and, as its corollary, we prove our Main Theorem. As the anonymous referee pointed out, the technique of the present paper could be also suitably adapted for proving the renewal theorem for denominators of the \emph{best approximations of the first kind} which are associated with the Farey map.

\section{Preliminaries and Main Result}\label{section: preliminaries and main result}
\subsection{Euclidean Continued Fractions and the Gauss map}
Given $\alpha\in(0,1]\smallsetminus\mathbb{Q}$, its continued fraction expansion is denoted by $$\alpha=\frac{1}{a_1+\frac{1}{a_2+\frac{1}{a_3+\frac{1}{\ddots}}}}=[a_1,a_2,a_3,\ldots,a_n,\ldots],$$
where $a_n\in\N=\{1,2,3,\ldots\}$. The convergents of $\alpha$ are denoted by $\left\{P_n/Q_n\right\}_{n\in\N}$ and defined by $\frac{P_n}{Q_n}=[a_1,a_2,\ldots,a_n]$, with $\mathrm{GCD}(P_n,Q_n)=1$. This kind of continued fractions is called ``Euclidean'' because of its connection with the Euclidean algorithm used to determine the $\mathrm{GCD}$ of two positive integers: given $a,b\in\N$, $a<b$, the partial quotients given by the Euclidean algorithm are the entries of the continued fraction expansion of $\alpha=\frac{a}{b}$.

Let $G$ be the Gauss map, i.e. $G:(0,1]\rightarrow(0,1]$, $G(\alpha)=\left\{\frac{1}{\alpha}\right\}=\frac{1}{\alpha}-\left\lfloor\frac{1}{a}\right\rfloor$, where $\{\cdot\}$ and $\lfloor\cdot\rfloor$ denote the fractional and the integer part respectively. The sequence $\{a_n\}_{n\in\N}$ represents a symbolic coding of the orbit $\{G^n(\alpha)\}_{n\in\N}$, namely $a_n=\left\lfloor\frac{1}{G^{n-1}(\alpha)}\right\rfloor$. A point $\alpha\in(0,1]\smallsetminus\mathbb{Q}$ is thus identified with the sequence $\{a_n\}_{n\in\N}\in\N^\N$. 
There is a natural Markov partition of $(0,1]$ for the Gauss map corresponding to this symbolic representation. $G$ has countably many branches; each branch is surjective and defined on $A(k)=\left(\frac{1}{k+1},\frac{1}{k}\right]$, by $G|_{A(k)}(\alpha)=\frac{1}{\alpha}-k$. 
The map $G$ has an invariant measure $\mu_G$ which is absolutely continuous w.r.t. the Lebesgue measure on $(0,1]$ and it is given by the density $\frac{\de\mu_G}{\de\alpha}=\frac{1}{\log 2}\frac{1}{1+\alpha}$. As a general reference, see \cite{Khinchin35}.

\subsection{Continued Fractions with Even Entries and the corresponding map $T$}
We shall consider a modification of the Euclidean continued fraction expansion of $\alpha\in(0,1]\smallsetminus\mathbb{Q}$, namely an expression of the form 
\begin{equation}\nonumber
\alpha=\frac{1}{2k_1+\frac{\xi_1}{2k_2+\frac{\xi_2}{2k_3+\frac{\xi_3}{\ddots}}}}=\left[\left[(k_1,\xi_1),(k_2,\xi_2),(k_3,\xi_3),\ldots,(k_n,\xi_n),\ldots\right]\right],
\end{equation}
where $(k_n,\xi_n)\in\N\times\{\pm1\}=:\Omega$. The corresponding convergents of $\alpha$ are denoted by $\{p_n/q_n\}_{n\in\N}$ and defined by $$\frac{p_n}{q_n}=\les(k_1,\xi_1),(k_2,\xi_2),\ldots,(k_n,*)\res=\frac{1}{2k_1+\frac{\xi_1}{2k_2+\frac{\xi_2}{2k_3+\ldots+\frac{\xi_{n-2}}{2k_{n-1}+\frac{\xi_{n-1}}{2k_n}}}}},$$
where $\mathrm{GCD}(p_n,q_n)=1$ and $``*"$ denotes an arbitrary element of $\{\pm1\}$. The algorithm which corresponds to this kind of continued fractions is similar to the Euclidean one, but at each step it is of the form $v=c\,u+\xi\,r$, with $c\in2\N$, $\xi=\pm1$ and $0\leq r<u$. Given $a,b\in\N$, $a<b$, as input, the sequence of pairs $\{(c_n,\xi_n)\}_{n\in\N}$ generated by the algorithm corresponds to the sequence $\{(k_n,\xi_n)\}_{n\in\N}$ relative to expansion of $\alpha=\frac{a}{b}$, with $k_n=\frac{c_n}{2}$. In this context, the role of the Gauss map is played by the map $T$, defined as follows.
Let us consider the partition of the interval $(0,1]$ given by $\{B(k,\xi)\}_{(k,\xi)\in\Omega}$, where 
\begin{equation}\nonumber
B(k,-1)=\left(\frac{1}{2k},\frac{1}{2k-1}\right],\hspace{.6cm}B(k,+1)=\left(\frac{1}{2k+1},\frac{1}{2k}\right],\hspace{.5cm}k\in\mathbb{N}
\end{equation}
and let $T$ be the map on $(0,1]$
given by
\begin{equation}\nonumber
T(x)=\xi\cdot\left(\frac{1}{x}-2k\right),\hspace{1cm}x\in B(k,\xi).
\end{equation}
Notice that for $x\in B(k,\xi)$ we have the identity $x=\frac{1}{2k+\xi\, T(x)}$. Therefore  
\begin{equation}\nonumber
x=\les(k_1,\xi_1), (k_2,\xi_2), (k_3,\xi_3), \ldots\res\hspace{.5cm}\mbox{implies}\hspace{.5cm}T^n(x)=\les(k_{n+1},\xi_{n+1}),(k_{n+2},\xi_{n+2}),\ldots\res,
\end{equation}
i.e $T$ acts as a shift over the space $\Omega^\N$.
Continued fraction expansions with even partial quotients and their connection with the map $T$ were initially discussed by F. Schweiger \cite{Schweiger82, Schweiger84} and a deep insight was provided by C. Kraaikamp and A. Lopes \cite{Kraaikamp-Lopes96}, in relation with closed geodesics for the theta group (the subgroup of $\mathrm{SL}(2,\Z)$ generated by $z\mapsto-\frac{1}{z}$ and $z\mapsto z+2$). A detailed analysis of the Euclidean-like algorithm associated to ECF-expansions (and several other expansions) is presented by B. Vall\'ee \cite{Vallee06}.
 
Our interest for such continued fraction expansions and the map $T$ comes from the study of quadratic trigonometric sums of the form $\sum_{n=0}^{N-1}\exp(\pi\,i\,\alpha\, n^2)$, whose renormalization properties are described (see e.g. \cite{Berry-Goldberg-1988, Coutsias-Kazarinoff-1998, Fedotov-Klopp}) by the transformation on $[-1,1]\smallsetminus\{0\}$ defined by $x\mapsto-\frac{1}{x}\,(\mathrm{mod}\,2)$. The restriction of this map to $(0,1]$ is, in modulus, equal to the map $T$.
We shall not dwell on the renormalization of such sums, which will be the subject of our future work.
\begin{figure}
\begin{center}
\includegraphics[width=6cm, angle=0]{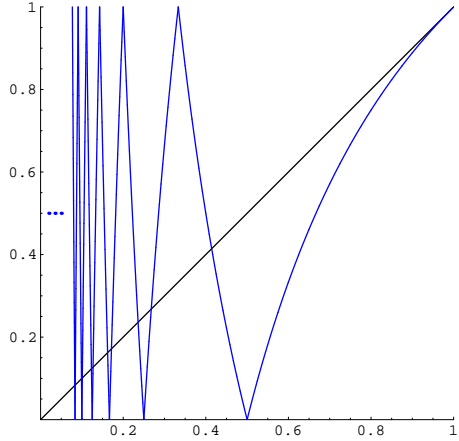}
\caption{\small{Graph of $T$.}} \label{fig: graph of T}
\end{center}
\end{figure}

In comparison with the Gauss map $G$, the map $T$ has a different qualitative behavior. Indeed, it is not uniformly expanding since it has an indifferent fixed point\footnote{A fixed point $x_0$ is called \emph{indifferent} if the map has derivative equal to 1 in modulus at $x_0$. This feature can be found in several maps, which are called \emph{intermittent}. A celebrated example is the Farey map \cite{Feigenbaum-88}, which has an indifferent fixed point at 0. For an overview on intermittent systems see, e.g., \cite{Schuster}.} 
at $x=1$.  Because of this, the map T does not have a finite invariant measure absolutely continuous w.r.t. the
Lebesgue measure. Indeed the expectation of the time spent by a random trajectory in a small neighborhood of $x=1$ is infinite and therefore any invariant measure should give infinite measure to any neighborhood of $x=1$. The following Theorem was proven by F. Schweiger
\cite{Schweiger82}:
\begin{theorem}
The map $T:(0,1]\rightarrow(0,1]$ has a
$\sigma$-finite invariant measure $\nu$ with infinite mass. $\nu$
has density $h(\alpha)=\frac{1}{\alpha+1}-\frac{1}{\alpha-1}$. Moreover $T$ is ergodic, i.e. if $A$ is a $T$-invariant set, then either $\nu(A)=0$ or $\nu(A^c)=0$.
\end{theorem}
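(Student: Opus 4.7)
The plan is to establish the three assertions (invariance, infinite mass with $\sigma$-finiteness, and ergodicity) in that order.

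\textbf{Step 1 (invariance).} First I would rewrite the density as $h(\alpha)=\frac{2}{1-\alpha^2}$, which is equivalent to the stated form. Invariance of $\nu=h\,\de\alpha$ is equivalent to $\mathcal{L}_T h = h$, where $\mathcal{L}_T$ is the transfer (Perron--Frobenius) operator. Since $T|_{B(k,\xi)}(x)=\xi(1/x-2k)$ has inverse branches $\psi_{k,\xi}(y)=\frac{1}{2k+\xi y}$ for $(k,\xi)\in\Omega$ and $|T'(\psi_{k,\xi}(y))|^{-1}=\psi_{k,\xi}(y)^2$, I would compute
\begin{equation*}
(\mathcal{L}_T h)(y)=\sum_{k\geq 1}\sum_{\xi=\pm 1}\frac{2\,\psi_{k,\xi}(y)^2}{1-\psi_{k,\xi}(y)^2}=\sum_{k\geq 1}\sum_{\xi=\pm 1}\frac{2}{(2k+\xi y)^2-1}.
\end{equation*}
The key computational observation is that
\begin{equation*}
\frac{2}{(2k+\xi y)^2-1}=\frac{1}{2k+\xi y-1}-\frac{1}{2k+\xi y+1},
\end{equation*}
so each of the two sums (over $\xi=+1$ and $\xi=-1$) telescopes, producing $\frac{1}{1+y}$ and $\frac{1}{1-y}$ respectively. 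Adding them gives exactly $h(y)$, which proves $T$-invariance.

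\textbf{Step 2 ($\sigma$-finiteness and infinite mass).} A direct integration of $h$ gives $\int_0^1 h(\alpha)\,\de\alpha=\int_0^1\bigl(\frac{1}{1+\alpha}+\frac{1}{1-\alpha}\bigr)\de\alpha$, and the second summand diverges logarithmically at $\alpha=1$, so $\nu$ has infinite total mass. The divergence is entirely concentrated at the indifferent fixed point $x=1$, as forewarned in the paragraph preceding the statement. On the other hand, the exhaustion $(0,1]=\bigcup_{n\geq 2}(0,1-1/n]$ consists of sets on which $h$ is bounded, hence of finite $\nu$-measure; this gives $\sigma$-finiteness.

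\textbf{Step 3 (ergodicity).} This is the genuine difficulty: because $\nu$ is infinite, one cannot apply standard Birkhoff-based arguments directly, and $T$ is not uniformly expanding. The strategy I would use is to pass to an induced (first-return) map on a set bounded away from the indifferent fixed point, for instance $Y=\bigcup_{k\geq 2,\,\xi=\pm 1}B(k,\xi)=(0,1/3]$; this is the same idea that motivates the construction of the jump transformation $R$ of the introduction. The induced map $T_Y$ is a piecewise smooth Markov map with full branches, uniform expansion, and bounded distortion (Renyi's condition), so by the folklore theorem it admits a unique absolutely continuous invariant probability measure $\nu_Y$ with respect to which it is exact, and in particular ergodic. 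Any $T$-invariant set $A\subset(0,1]$ intersects $Y$ in a $T_Y$-invariant set, which must be either null or co-null for $\nu_Y$; since the $T$-orbit of almost every $x\in(0,1]$ visits $Y$ (the indifferent fixed point has Lebesgue measure zero and its basin does not trap orbits), this transfers to $\nu(A)=0$ or $\nu(A^c)=0$ for $T$. This Kaku\-tani--Kac type passage from the induced map to the original map is the main obstacle to keep rigorous; the distortion estimates required for the folklore theorem are standard but need to be verified in the specific ECF setting, and I would cite Schweiger \cite{Schweiger82,Schweiger84} for the precise form we need here.
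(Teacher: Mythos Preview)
The paper does not actually contain a proof of this theorem: it is quoted verbatim from Schweiger \cite{Schweiger82} and simply attributed to him, so there is no in-paper argument to compare against. Your Steps~1 and~2 are correct and complete; the transfer-operator computation telescopes exactly as you describe, and the divergence of $\int_0^1\frac{1}{1-\alpha}\,\de\alpha$ at the indifferent fixed point gives infinite mass while the obvious exhaustion gives $\sigma$-finiteness.

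Regarding Step~3, your inducing strategy is the right idea and is indeed close in spirit to what the paper later carries out explicitly with the jump transformation $R$ (first passage to $(0,\tfrac12]=B(1,-1)^c$, not your $(0,\tfrac13]$). Two remarks are worth making. First, the jump transformation $R$ of the paper is not literally the first-return map to a subset: it is $T^{\tau+1}$ where $\tau$ is the first \emph{hitting} time of $(0,\tfrac12]$, so $R$ maps all of $(0,1]$ onto itself with full surjective branches, which is what makes the Folklore Theorem apply cleanly (the paper checks the expansion and distortion bounds just after defining $R$). Your first-return map to $(0,\tfrac13]$ would work too, but you would need to verify separately that its branches are full and that Renyi's condition holds, which you only assert. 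Second, the passage from ergodicity of the induced/jump map back to ergodicity of $(T,\nu)$ requires the conservativity of $T$ (so that almost every orbit does enter the inducing set); this is true here because the only point that never enters $(0,\tfrac12]$ is the fixed point $x=1$, but you should state it rather than leave it implicit. With those two points addressed, your argument is a correct reconstruction of Schweiger's result.
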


\begin{remark}\label{remark: conversion}
It is easy to convert Euclidean continued fraction expansions into ECF-expansions, using the following identity:
\begin{equation}\label{eq: conversion}
a_1+\frac{1}{a_2+\frac{1}{a_3+\gamma}}=(a_1+1)-\frac{1}{2-\frac{1}{2-\ldots-\frac{1}{2-\frac{1}{(a_3+1)+\gamma}}}},
\end{equation}
where ``2'' appears $a_2-1$ times in the right hand side.
Given $\alpha=[a_1,a_2,a_3,\ldots]$,
we can recursively apply the previous identity, moving from the left to the right, to each triplet $(a_{j-1},a_j,a_{j+1})$ such that $a_{j-1}$ is odd and obtain a new sequence $\{(c_n,\xi_n)\}_{n\in\N}$ with $c_n\in2\N$ and $\xi_n\in\{\pm1\}$. Setting $k_n=\frac{c_n}{2}$, we get the sequence $\{(k_n,\xi_n)\}_{n\in\N}$ such that $\alpha=\les(k_1,\xi_1),(k_2,\xi_2),(k_3,\xi_3),\ldots\res$. 
It is easy to see that $1=[[(1,-1),(1,-1),\ldots]]$ and in particular (\ref{eq: conversion}) shows that the ECF-expansion of any rational number is either finite or eventually periodic with $\overline{(1,-1)}$-tail. In our discussion we shall deal only with irrational $\alpha$ for which the ECF-expansion is infinite with no $\overline{(1,-1)}$-tail. Let us denote the set of such sequences with $\dot\Omega^\N$.
\end{remark}

\subsection{The Jump Transformation $R$}
Let $\alpha\in\X=\dot\Omega^\mathbb{N}$, which is endowed with the Borel $\sigma$-algebra $\frak B$. We use the notion of jump
transformation, due to Schweiger (see \cite{Schweiger_ET_Fibred_Systems}, chapter 19), to construct an uniformly expanding map
$R:\X\longrightarrow\X$. Define
\begin{eqnarray}
\tau(\alpha)&:=&\min\left\{j\geq 0\hspace{.2cm}\mbox{s.t.}\hspace{.2cm}
T^j(\alpha)\in
B(1,-1)^c=\left(0,\frac{1}{2}\right]\right\}\hspace{.3cm}\mbox{and}\nonumber\\
R(\alpha)&:=&T^{\tau(\alpha)+1}(\alpha).\nonumber
\end{eqnarray}
$R$ is said to be the \emph{jump transformation\footnote{Some
authors refer to this map as the \emph{induced map} (or the \emph{first passage map}) \emph{w.r.t $\left(0,\frac{1}{2}\right]$}.} associated to $T$ w.r.t. $\left(0,\frac{1}{2}\right]$.}
\begin{figure}[!ht]
\begin{center}
\hspace{0cm}
\parbox{6.0cm}{\includegraphics[width=6.1cm]{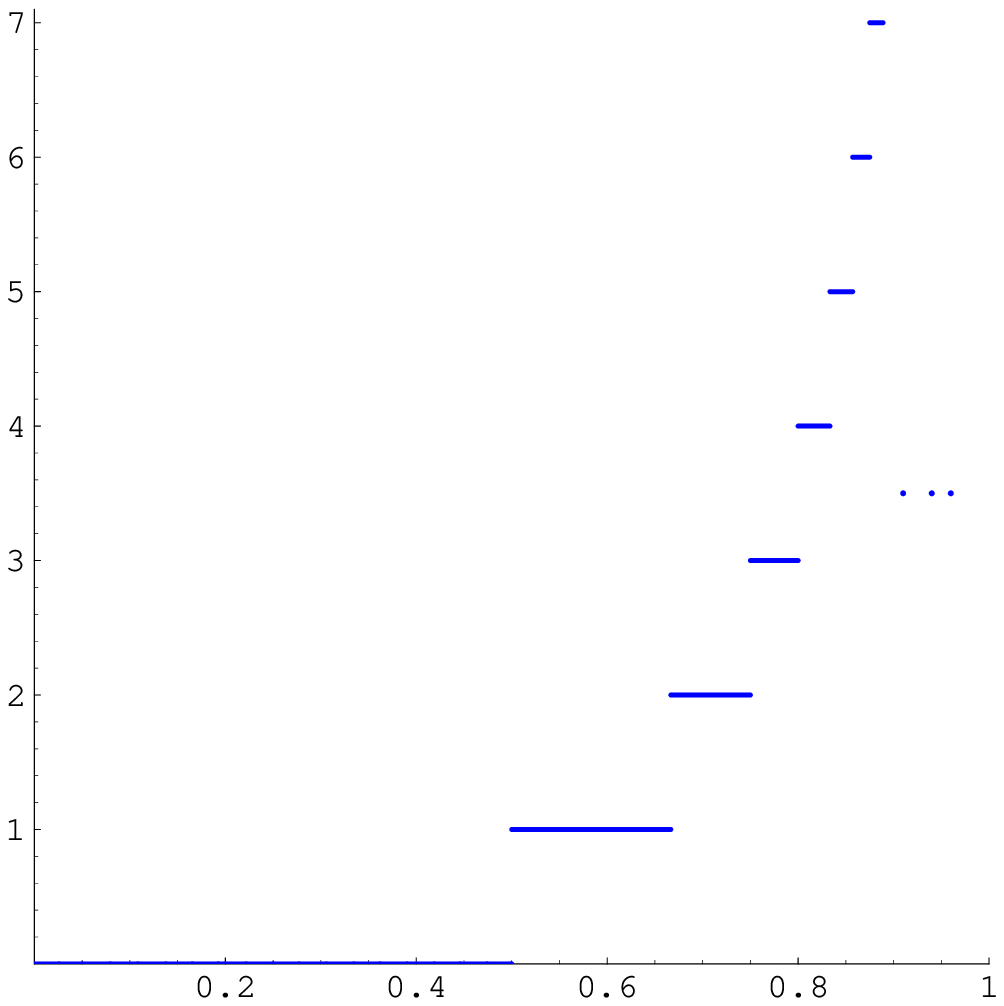}\caption{\small{Graph of $\tau$.}}\label{graph_of_tau}}
\hspace{1cm}
\parbox{6.0cm}{\includegraphics[width=6.1cm]{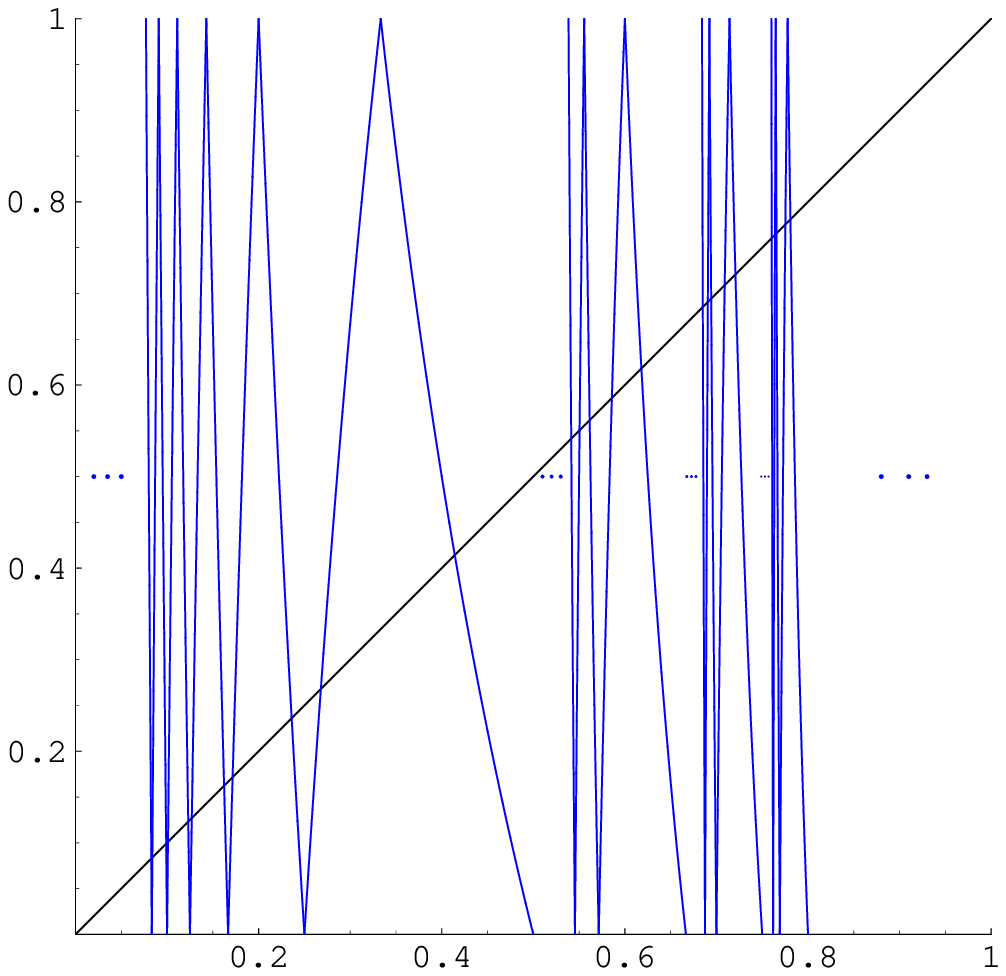}\caption{\small{Graph of $R$.}}\label{graph_of_R}}
\end{center}
\end{figure}
It can be checked that $R$ is uniformly expanding and has bounded distortion, more precisely $\inf_{\alpha\in\X}\left|R'(\alpha)\right|\geq4$ and $\sup_{\alpha\in\X}\left|{R''(\alpha)}/{(R'(\alpha))^2}\right|\leq2$. Therefore, since its branches are surjective, we can deduce by the ``Folklore Theorem'' of Adler \cite{Adler75, Bowen1979} that it has an invariant probability
measure $\mu$ which is absolutely continuous w.r.t. the Lebesgue
measure. Its density $f=\frac{\de\mu}{\de \alpha}$, can be computed explicitly (e.g. using transfer operator
identities, \cite{Vallee06}):
\begin{equation}\nonumber
f(\alpha)=\frac{1}{\log 3}\left(\frac{1}{3-\alpha}+\frac{1}{1+\alpha}\right).
\end{equation}
Let us construct the symbolic representation of the map $R$.
Denote $\overline\omega=(1,-1)\in\Omega$ and
$\Omega^*:=\Omega\smallsetminus\{\overline\omega\}$. Given
$\alpha=\les\omega_1,\omega_2,\omega_3,\ldots\res\in\dot\Omega^\N$ we have that
$\tau=\tau(\alpha)=\min\{j\geq0\hspace{.2cm}\mbox{s.t.}\hspace{.2cm}\omega_{j+1}\neq\overline\omega\}$
and
$R(\alpha)=\les\omega_{\tau+2},\omega_{\tau+3},\omega_{\tau+4},\ldots\res$.
Equivalently,
\begin{eqnarray}
&\les\omega_1,\omega_2,\omega_3,\ldots\res\stackrel{R}{\longmapsto}\les\omega_2,\omega_3,\omega_4,\ldots\res\hspace{.3cm}\mbox{if
$\omega_1\in\Omega^*$;}\nonumber\\
&[[\,\underbrace{\overline\omega,\ldots,\overline\omega}_{\mbox{\tiny{$k$
times}}}\,,\omega_{k+1},\omega_{k+2},\omega_{k+3},\ldots]]\stackrel{R}{\longmapsto}\les\omega_{k+2},\omega_{k+3},\ldots\res\hspace{.3cm}\mbox{if
$\omega_{k+1}\in\Omega^*$}.\nonumber
\end{eqnarray}
Let us set $\Sigma=\N_0\times\Omega^*$, $\N_0=\N\cup\{0\}$,
and denote by $(h,\omega)\in\Sigma$ the word
$(\,\underbrace{\overline\omega,\ldots\overline\omega}_{\mbox{\tiny{$h$
times}}},\omega)$ of length $h+1$ where $\omega\in\Omega^*$. 
In this way we
code each element of
$\dot\Omega^\mathbb{N}$ 
by an element of $\Sigma^\N$ and this coding is clearly invertible. After we identify $\X$ with
$\Sigma^\N$, the map $R:\X\rightarrow\X$ 
becomes a shift on the space $\Sigma^\N$.

For brevity, we denote $m^\pm=0\cdot m^\pm=(0,(m,\pm 1))$ and $h\cdot m^\pm=(h,(m,\pm 1))$. 
By construction, $h\cdot 1^-$ is not allowed for any $h\in\N_0$. 
In the following, we will use both the codings $\X=\dot\Omega^\N$ and $\X=\Sigma^\N$, denoting the elements of $\Omega$ by $\omega$ and the ones of $\Sigma$ by $\sigma$.
\begin{ese}
\begin{eqnarray}
\alpha&=&\frac{1}{6+\frac{1}{14-\frac{1}{2-\frac{1}{2-\frac{1}{2-\frac{1}{4-\frac{1}{10+\frac{1}{\ddots}}}}}}}}=\nonumber\\
&=&\left((3,+1),(7,-1),(1,-1),(1,-1),(1,-1),(2,-1),(5,+1),\ldots\right)\in\dot\Omega^\mathbb{N}\nonumber\\
&=&\left((0,(3,+1)),(0,(7,-1)),(3,(2,-1)),(0,(5,+1)),\ldots\right)\in\Sigma^\N\nonumber\\ 
&=&(0\cdot3^+, 0\cdot7^-,3\cdot2^-,0\cdot5^+,\ldots)=(3^+,7^-,3\cdot 2^-,5^+,\ldots).\nonumber
\end{eqnarray}
\end{ese}

\subsection{Natural Extension of $R$}
The notion of \emph{natural extension} of was introduced by V. Rokhlin \cite{Rokhlin-1961} and since then it became a powerful tool in the study of metric invariants and statistical properties of endomorphisms of measure spaces. Since $R$ is a shift over $\Sigma^\N$, it is easy to construct its natural extension $\hat R$, which will act on the space $D(\hat
R):=\Sigma^\Z$: 
\begin{eqnarray}
(\ldots,\sigma_{-2},\sigma_{-1},\sigma_{0};\sigma_{1},\sigma_{2},\ldots)\stackrel{\hat
R}{\longmapsto}(\ldots,\sigma'_{-2},\sigma'_{-1},\sigma'_{0};\sigma'_{1},\sigma'_{2},\ldots)\nonumber
\end{eqnarray}
where $\sigma'_i=\sigma_{i+1}$. The map $\hat R$ is
clearly invertible and we define the $\sigma$-algebra $\hat{\frak
B}$ on $D(\hat R)$ as the smallest $\sigma$-algebra containing the
preimages $T^{-1}(C)$,
$C\in\frak B$.\\
Given any $\hat\omega\in D(\hat R)$,
$\hat\omega^-=(\sigma_0,\sigma_{-1},\sigma_{-2},\ldots)$ 
and $\hat\omega^+=(\sigma_1,\sigma_2,\sigma_3,\ldots)$ 
will denote the two components of $\hat\omega$, with 
$\sigma_i\in\Sigma$, $i\in\mathbb{Z}$.
Moreover, $\hat R$ has an invariant measure $\hat \mu$ which is
obtained by setting
\begin{eqnarray}
\hspace{.2cm}\hat\mu\left(\left\{\hat\omega\in D(\hat R):\:\sigma_{i_1}\in
C_1,\ldots,\sigma_{i_r}\in C_r\right\}\right)
:=\mu\left(\left\{\hat\omega^+\in
\Sigma^\N:\:\sigma_{i_1+n}\in
C_1,\ldots,\sigma_{i_r+n}\in C_r\right\}\right),\nonumber
\end{eqnarray}
for any $r\in\N$, $i_1,\ldots,i_r\in\mathbb{Z}$ and
$C_1,\ldots,C_r\in\frak B$, where $n\geq 0$ is chosen such that
$i_k+n>0$ for $k=1,\ldots,r$.
\begin{remark}
Coding $(\ldots,\sigma_{-1},\sigma_0;\sigma_1,\sigma_2,\ldots)$ into $(\ldots,\omega_{-1},\omega_{0};\omega_1,\omega_2,\ldots)$ we get  
$\alpha=\hat\omega^+=[[\omega_1,\omega_2,\omega_3\ldots]]\in\X$. On the other hand, we can 
identify $\hat\omega^-$ with a point in $\left(-\frac{1}{3},1\right]\smallsetminus\mathbb{Q}$ by setting $$\hat\omega^-=(\omega_0,\omega_{-1},\omega_{-2},\ldots)=\les(0,\xi_0);(k_0,\xi_{-1}),(k_{-1},\xi_{-2}),\ldots\res=\frac{\xi_0}{2k_0+\frac{\xi_{-1}}{2k_{-1}+\frac{\xi_{-2}}{\ddots}}}.$$
Therefore $D(\hat R)$ is identified with the \virg{rectangle} 
$(0,1]\times\left(-\frac{1}{3},1\right]\smallsetminus\mathbb{Q}^2$.
\end{remark}
The natural extension of the map $T$, denoted by $\hat T$, is defined analogously as an invertible shift over $\Omega^\mathbb{Z}$, the space of bi-sided sequences over the alphabet $\Omega$. Schweiger \cite{Schweiger84} proved that $\hat T$ has an absolutely continuous invariant measure over $(0,1]\times(-1,1]\smallsetminus\mathbb{Q}^2$ which is $\sigma$-finite with infinite mass.

\subsection{$R$-Convergents} 
For $\hat\omega\in D(\hat R)$ 
we set $p_n(\hat\omega)=p_n(\hat\omega^+)$, $q_n(\hat\omega)=q_n(\hat\omega^+)$ and we call $\frac{p_n}{q_n}$ \emph{the $n$-th $T$-convergent} of $\hat\omega$. We are interested in a particular subsequence of $\left\{p_n/q_n\right\}_{n\in\N}$, corresponding to the map $R$, defined as follows.

Set $\theta_0=1$ and
$\theta_i=\theta_i(\hat\omega)=\theta_i(\hat\omega^+)=1+\tau(R^{i-1}(\hat\omega^+))$ for
$i\geq 1$ and define
$\nu_n=\nu_n(\hat\omega)=\nu_n(\hat\omega^+)=\sum_{k=0}^{n}\theta_i$, $n\geq0$. 
The sequence $\{\nu_n\}_n$ gives us the index in $\hat\omega$ of the first $\Omega$-coordinate of $\hat R^n(\hat\omega)$, i.e. $(\hat R^n(\ldots,\omega_0;\omega_1,\omega_2,\ldots))^+=(\omega_{\nu_n},\omega_{\nu_n+1},\ldots)$, $\omega_i\in\Omega$. Furthermore, we have the useful recurrent relation: 
\begin{equation}\nonumber
\nu_n(\hat\omega)=\nu_{n-1}(\hat\omega)+\tau(R^{n-1}(\hat\omega^+))+1.
\end{equation} 
The fraction $\frac{p_{\nu_n}}{q_{\nu_n}}$ is called \emph{the $n$-th $R$-convergent} of $\hat\omega$. We are mainly concerned about the sequence of denominators of the $R$-convergents of $\hat\omega$, i.e. $\hat q_n=\hat q_n(\hat\omega)=\hat q_n(\hat\omega^+):=q_{\nu_n}$, $n\in\N$. The $0$-th denominator is defined in a different way, namely $\hat q_0:=1$. Notice that $\nu_0(\hat\omega)=1$ for each $\hat\omega\in D(\hat R)$ and $1=\hat q_0\neq q_{\nu_0}=2k_1\geq2$.
It is possible to define $\nu_{-n}=\nu_{-n}(\hat\omega)=\nu_{-n}(\hat\omega^-)$ for $n\in\N$ as well, namely $\nu_{-n}=-\sum_{k=1}^{n}\tau\big((\hat R^{-k}(\hat\omega))^+\big)-n+1$.

\begin{ese}
Let $\alpha=\hat\omega^+=\pi-3$.
\begin{eqnarray}
\alpha=\hat\omega^+&\hspace{-.2cm}=\hspace{-.2cm}&(4^-,\underbrace{1^-,\ldots,1^-}_{14\:\mbox{\tiny times}},1^+,146^+,1^-,1^+,1^+,1^-,1^-,1^-,1^+,7^+,2^+,\ldots)=\nonumber\\
&\hspace{-.2cm}=\hspace{-.2cm}&(4^-,14\cdot
1^+,146^+,1\cdot1^+,1^+,3\cdot1^+,7^+,2^+,\ldots),\nonumber
\end{eqnarray}
We have
$\{\theta_k\}_{k=0}^8=\{1,1,15,1,2,1,4,1,1\}$
and hence
$\{\nu_n\}_{n=0}^8=\{1,2,17,18,20,21,25,26,27\}$.
\end{ese}

\subsection{Main Result: Renewal-type Theorems for $\{q_n\}$ and $\{\hat q_n\}$}
Given $L>0$, consider the smallest index $n$ for which $q_n$ exceeds $L$, namely
\begin{equation}\nonumber
n_L=n_L(\hat\omega)=n_L(\hat\omega^+):=\min\left\{n\in\N:\:q_n>L\right\},
\end{equation}
which is referred to as \emph{renewal time} or \emph{waiting time}.
The following Theorem is the main result of this paper.
\begin{maintheorem}
Fix $N_1,N_2\in\N$. The ratio $\frac{q_{n_L}}{L}$ and the entries $\omega_{n_L+j}$ for $-N_1<j\leq N_2$ have a joint limiting probability distribution, as $L\rightarrow\infty$, with respect to the measure $\mu$.
In other words: for each $N_1,N_2\in\N$ there exists a probability measure $\mathrm{P}_{N_1,N_2}$ on $(1,+\infty)\times\Omega^{N_1+N_2}$ such that for all $a,b>1$, $d_j\in\Omega$, $-N_1<j\leq N_2$,
\begin{eqnarray}
\mu\left(\left\{\alpha:\:a<\frac{q_{n_L}}{L}<b,\:\omega_{n_L+j}=d_j,\:-N_1< j\leq N_2\right\}\right)\stackrel{L\rightarrow\infty}{-\hspace{-.1cm}-\hspace{-.2cm}\longrightarrow}\label{eq: main theorem}\\
\mathrm{P}_{N_1,N_2}\big((a,b)\times\{d_{-N_1+1}\}\times\cdots\times\{d_0\}\times\cdots\times\{d_{N_2}\}\big).\nonumber
\end{eqnarray}
\end{maintheorem}
A similar statement was proven by Ya. Sinai and C. Ulcigrai \cite{Sinai-Ulcigrai07},
for the Gauss map and the denominators $\{Q_n\}_{n\in\N}$. They used a special flow and its mixing property to prove the existence of the limiting distribution. Their strategy cannot be applied directly to the sequence $\{q_n\}$ because the map $T$ does not have a finite invariant measure. However, it can be applied to the subsequence $\{\hat q_n\}\subset\{q_n\}$ and 
the Gauss-like map $R$. Thus we first prove the renewal-type Theorem \ref{theorem: renewal for hat q} (see below) for the subsequence $\{\hat q_n\}$ and derive our Main Theorem from it.

Let us define the renewal time for the sequence $\{\hat q_n\}$: given $L>0$, set 
\begin{equation}\nonumber
\hat n_L=\hat n_L(\hat\omega)=\hat n_L(\hat\omega^+):=\min\left\{n\in\N:\:\hat q_n>L\right\}.
\end{equation}
\begin{theorem}\label{theorem: renewal for hat q}
For each $N_1,N_2\in\N$ there exists a probability measure $\mathrm{P}_{N_1,N_2}'$ on $(1,+\infty)\times\Sigma^{N_1+N_2}$ such that for all $a,b>1$, $c_j\in\Sigma$, $-N_1<j\leq N_2$,
\begin{eqnarray}
\mu\left(\left\{\alpha:\:a<\frac{\hat q_{\hat n_L}}{L}<b,\:\sigma_{\hat n_L+j}=c_j,\:-N_1< j\leq N_2\right\}\right)\stackrel{L\rightarrow\infty}{-\hspace{-.1cm}-\hspace{-.2cm}\longrightarrow}\label{eq: main theorem}\\
\mathrm{P}_{N_1,N_2}'\big((a,b)\times\{c_{-N_1+1}\}\times\cdots\times\{c_0\}\times\cdots\times\{c_{N_2}\}\big).\nonumber
\end{eqnarray}
\end{theorem}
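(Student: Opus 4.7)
I would follow the Sinai--Ulcigrai strategy, adapted to the jump transformation $R$ and its natural extension $\hat R$. The key idea is to find a roof function $r : D(\hat R) \to (0,\infty)$ such that $\log \hat q_n(\hat\omega)$ is well approximated by the Birkhoff sum $S_n r(\hat\omega) = \sum_{k=0}^{n-1} r(\hat R^k \hat\omega)$, with an error that can be made uniformly small by conditioning on sufficiently fine cylinders of $\hat R$. A natural candidate is $r(\hat\omega) = -\log|R'(\hat\omega^+)|$ (possibly with a boundary correction depending on $\hat\omega^-$), since by bounded distortion of $R$ one has $\log \hat q_n = -\log|(R^n)'(\hat\omega^+)| + O(1)$. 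With such an $r$, the defining condition $\hat q_{\hat n_L} > L$ becomes, up to controlled error, $S_{\hat n_L} r(\hat\omega) > \log L$, and the ratio $\hat q_{\hat n_L}/L$ is encoded by the overshoot $S_{\hat n_L} r - \log L$.

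Next, I would build the special flow $\Phi^t$ over $(\hat R, \hat\mu)$ with roof $r$ on the tower $\hat X^r = \{(\hat\omega, s) : 0 \le s < r(\hat\omega)\}$, normalized by $\hat\mu \otimes ds / \int r \, d\hat\mu$. The crucial observation is that $\Phi^{\log L}(\hat\omega, 0)$ lands at the $(\hat n_L - 1)$-th floor of the tower, at height $\log L - S_{\hat n_L - 1} r(\hat\omega)$. Consequently, requiring $a < \hat q_{\hat n_L}/L < b$ together with prescribed values $\sigma_{\hat n_L + j} = c_j$ for $-N_1 < j \le N_2$ translates, modulo the localization error, into the event $\Phi^{\log L}(\hat\omega, 0) \in A$ for a specific set $A = A(a,b,c_{-N_1+1},\ldots,c_{N_2}) \subset \hat X^r$ determined by a height constraint and a cylinder condition on the base point.

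The probability to be estimated then takes the form of a correlation
\begin{equation*}
\int \mathbf{1}_A\bigl(\Phi^{\log L}(\hat\omega, 0)\bigr)\, d\hat\mu(\hat\omega),
\end{equation*}
i.e.\ a correlation between $\mathbf{1}_A$ and a ``delta at height $0$''. Smoothing the second factor in the $s$-variable over a small interval $[0,\varepsilon)$ produces a genuine $L^2$ correlation to which the mixing of $\Phi^t$ (proved in Appendix A) applies, forcing convergence as $L \to \infty$ to the flow-invariant measure of $A$ times the smoothing weight. Letting $\varepsilon \to 0$ and normalizing yields the claimed limit, which defines $\mathrm{P}_{N_1,N_2}'$ on $(1,+\infty)\times\Sigma^{N_1+N_2}$. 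Since the convergents depend only on $\hat\omega^+$ and $\mu$ is the marginal of $\hat\mu$ on the positive coordinates, the result transfers back to $\mu$ on $\X$.

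The hardest part will be controlling the passage from the exact renewal condition on $\hat q_n$ to the Birkhoff-sum condition on $S_n r$ uniformly in $L$. The roof $r$ is unbounded because $\tau$ is unbounded (long $\overline\omega$-runs give arbitrarily large derivatives), so both the mixing step and the tail of the overshoot require the denominator growth estimates of Section \ref{section: recurrence relations and denominator estimates}. Moreover, to pull out a joint limit for several entries around $\hat n_L$, one must first restrict $\hat\omega$ to cylinders small enough that both the tower height and the finitely many future and past $\hat R$-symbols are essentially frozen, and then patch these local estimates uniformly in $L$; providing this uniform localization is exactly the role of the preparatory lemmata announced in Section \ref{section: reduction to a special flow}.
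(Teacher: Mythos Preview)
Your outline is essentially the paper's own strategy: natural extension $\hat R$, special flow under a roof, approximation of $\log\hat q_n$ by Birkhoff sums, thickening of the zero-section by $[0,\delta)$, and the mixing of Proposition~\ref{prop: the flow is mixing} to pass to the limit, all combined with the cylinder-localization lemmata of Section~\ref{section: reduction to a special flow}.

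One point deserves sharpening. Your candidate roof $r(\hat\omega)=-\log|R'(\hat\omega^+)|$ gives only $\log\hat q_n=S_n r+O(1)$, and that bounded error does \emph{not} become small on fine cylinders: it depends on $R^n(\hat\omega^+)$, which is not frozen by fixing finitely many initial $\Sigma$-symbols. The paper instead takes the roof $\psi$ of (\ref{def: roof function psi}), built from the \emph{backward} ECF of $\hat\omega$, and proves in Lemma~\ref{lemma: approximation by Birkhoff sums} that $g_n:=\log\hat q_n-S_n\psi$ converges \emph{exponentially} to a limit $g(\hat\omega)$; Lemma~\ref{lem: exp estimate on a cylinder} then shows $g$ oscillates by at most $C\,3^{-n/3}$ on any cylinder in $\frak C_{n,n}$. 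This is exactly the ``boundary correction depending on $\hat\omega^-$'' you allude to, but it is not optional: it is what makes the renewal time $\hat n_L$ coincide with the flow-renewal time $r(\hat\omega,T)$ for the shifted threshold $T=\log L-g_{\mathcal C}$ on a set of large measure (Lemma~\ref{lem: n_L = r(omega,T) on a set of large measure}). With that in place, your thickening-and-mixing argument is precisely what the paper carries out, yielding the limit $\mathrm{P}'_{N_1,N_2}$ in (\ref{eq: P_N= tilde mu(...)}).
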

The proofs of Theorem \ref{theorem: renewal for hat q} and Main Theorem are given in Section \ref{section: existence of limiting distribution}. 

\subsection{Cylinders}
For $c_i\in\Sigma$, 
$i=1,\ldots,n$, define the cylinder of
length $n$
$$\mathcal{C}[c_1,\ldots,c_n]=\left\{\hat\omega^+=\{\sigma_j\}_{j\in\N}\in\Sigma^\N:\:\sigma_i=c_i,\:1\leq i\leq n\right\}$$
and denote by $\frak C_n^{\,+}$ the set of all cylinders of length
$n$. If $\pi:D(\hat R)=\Sigma^\mathbb{Z}\rightarrow\Sigma^\N=\X$ is
the natural projection and $\mathcal C\in\frak C_n^{\,+}$, then we
shall denote by $\hat{\mathcal{C}}$ the set
$\pi^{-1}\mathcal{C}\subseteq D(\hat R)$. More generally, given
$c_i\in\Sigma$, $-n_1\leq i\leq n_2$, $n_{1,2}\in\N$, set 
$$\mathcal{C}[c_{-n_1},\ldots,c_0;c_1,\ldots,c_{n_2}]=\left\{
\{\sigma_j\}_{j\in\mathbb{Z}}\in D(\hat R):\:\sigma_i=c_i,\:-n_1\leq i\leq n_2\right\}$$
and denote by $\frak C_{n_1,n_2}$ the set of all such bi-sided
cylinders.\\ 
\begin{remark}\label{remark: measure of cylinders}
Notice that $\mathcal{C}[m^\pm]=\mathcal{C}[0\cdot m^\pm]=B(m,\pm 1)$. Moreover, after writing explicitly all cylinders of length one 
and integrating the density $f$, for all $h\cdot m^\pm\in\Sigma$, we get
\begin{eqnarray}
\mu\left(\mathcal{C}[h\cdot m^\pm]\right)&\leq&\frac{3}{\log
3\,(4h^2+8h+3)}\frac{1}{m^2}.\nonumber 
\end{eqnarray}
In particular, the measures of our cylinders of length one are $\mathcal{O}\left(\frac{1}{m^2}\right)$ as $m\rightarrow\infty$, where the constants implied by the $\mathcal{O}$-notation
depend on $h$ as above. 
\end{remark}

\subsection{Special Flows}\label{section: special flows}
Consider a probability space $(D,\mathcal{B},\mu)$, an invertible
$\mu$-preserving map $F:D\rightarrow D$ (the \virg{base}
transformation) and a positive function
$\varphi:D\rightarrow\mathbb{R}^+$ (the \virg{roof} function) such
that $I=\int_D\varphi(\omega)\,\de\mu(\omega)<\infty$. Define
$D_\Phi=\{(x,y)\in D\times\mathbb{R}:\:0\leq
y<\varphi(x)\}\subseteq D\times\mathbb{R}$ and set $\mu_\Phi$ as
the normalized measure obtained by restriction of the
product measure $I^{-1}\,\mu\times\lambda$ to $D_\Phi$, where $\lambda$ is the
Lebesgue measure on $\mathbb{R}$. The \emph{special flow
$\{\Phi_t\}_{t\in\mathbb{R}}$ built over $F$ under the roof
function $\varphi$} is the one-parameter group of
$\mu_\Phi$-preserving transformations on $D_\Phi$ whose action is
defined as follows (see e.g. \cite{CFS}):
\begin{equation}\nonumber
\left\{%
\begin{array}{ll}
    \Phi_t(x,y)=(x,y+t), & \hbox{if $0\leq y+t<\varphi(x)$;} \\
    \Phi_{\varphi(x)}(x,0)=(F(x),0). & \hbox{ } \\
\end{array}%
\right.
\end{equation}
The flow moves a point $(x,y)\in D_\Phi$ vertically upward with unit
speed to the ``roof'' point $(x,\varphi(x))$. After that, the point jumps
to the ``base'' point $(F(x),0)$  and continues moving vertically until the next jump and so on.
We shall denote by
$$S_0(\varphi,F)(x):=0,\hspace{.8cm}S_r(\varphi,F)(x):=\sum_{i=0}^{r-1}\varphi\left(F^i(x)\right),\hspace{.8cm} x\in D,\,r\in\N$$
the \emph{$r$-th (non-normalized) Birkhoff sum of $\varphi$ along
the trajectory of $x$ under $F$}. Given $x\in D$ and
$t\in\mathbb{R}^+$ we define $r(x,t)\in\N$,
$$r(x,t):=\min\{r\in\N\,:\:S_r(\varphi,F)(x)>t\}.$$
The non-negative integer $r(x,t)-1$ is the number of discrete
iterations of $F$ which the point $(x,0)\in D_\Phi$ undergoes
before time $t$. The flow $\Phi_t$ defined above acts therefore
for $t>0$ as
\begin{equation}\label{eq: action special flow Phi_t}
\Phi_t(x,0)=\left(F^{r(x,t)-1}(x),t-S_{r(x,t)-1}(\varphi,T)(x)\right),
\end{equation}
while for $t<0$ the action of the flow is defined using the
inverse map.

\section{Recurrence Relations and Denominator Estimates}\label{section: recurrence relations and denominator estimates}
Consider the Euclidean continued fraction expansion $[a_1,a_2,a_3,\ldots]$ of $\alpha\in\X$. 
We have the recurrence formulae
\begin{equation}\nonumber
P_{n}=a_{n}\,P_{n-1}+P_{n-2},\hspace{.8cm}
Q_{n}=a_{n}\,Q_{n-1}+Q_{n-2},\hspace{.3cm}n\in\N,
\end{equation}
with $Q_{-1}=P_0=0,\:\:P_{-1}=Q_0=1$ and two well-known estimates
on the quality of the approximation and on the growth of the
denominators, \cite{Khinchin35}:
\begin{equation}\label{eq: estimates for Euclidean CF}
\left|x-\frac{P_n}{Q_n}\right|\leq\frac{1}{Q_n^2},\hspace{.9cm}Q_n\geq2^{\frac{n-1}{2}},\hspace{.6cm}n\in\N
\end{equation}
Similar recurrence formulae are valid for ECF-expansions (see
\cite{Kraaikamp-Lopes96}): for $\alpha=\les(k_1,\xi_1),(k_2,\xi_2),\ldots\res$ we have
\begin{equation}\label{eq: recursion p n and q n}
p_{n}=2k_{n}\,p_{n-1}+\xi_{n-1}\,p_{n-2},\hspace{.8cm}
q_{n}=2k_{n}\,q_{n-1}+\xi_{n-1}\,q_{n-2},\hspace{.3cm}n\in\N,
\end{equation}
with $q_{-1}=p_{0}=0,\:\:p_{-1}=q_0=\xi_0={1}$. However, estimates
analogous to (\ref{eq: estimates for Euclidean CF}) are not
available for ECF-expansions. We have indeed the weaker estimates
(see \cite{Kraaikamp-Lopes96} and \cite{Schweiger82})
\begin{equation}\label{eq: estimates for ECF}
\left|x-\frac{p_n}{q_n}\right|\leq\frac{1}{q_n},\hspace{.9cm}q_n\geq
n+1,\hspace{.6cm}n\in\N
\end{equation}
and the latter, in particular, is optimal and cannot be improved.
Nevertheless, we claim that the sequence $\{\hat q_n\}_n$ grows \emph{at
least} exponentially fast. Indeed we prove the following Lemma
which provides an estimate similar to (\ref{eq: estimates for
Euclidean CF}).
\begin{lem}[Growth of $R$-denominators, lower bound]
For any $\hat\omega\in D(\hat R)$, the denominators $\hat q_n=\hat
q_n(\hat\omega)=\hat q_n(\hat\omega^+)$ satisfy the estimate
\begin{equation}\label{eq: growth of hat q n}
\hat q_n\geq3^{\frac{n}{3}},\hspace{.6cm}n\in\N.
\end{equation}
\end{lem}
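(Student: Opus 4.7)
The plan is to prove $\hat q_n\geq 3^{n/3}$ by strong induction, by establishing a two-step lower bound on the auxiliary sequence $X_n:=q_{\nu_n-1}$ and then using $\hat q_n=q_{\nu_n}>q_{\nu_n-1}=X_n$ (the $T$-denominators being strictly increasing).

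Rewriting \eqref{eq: recursion p n and q n} as $(q_{j+1},q_j)^{T}=N_j(q_j,q_{j-1})^{T}$ with $N_j=\begin{pmatrix} 2k_{j+1} & \xi_j \\ 1 & 0 \end{pmatrix}$, the $n$-th $R$-block $h_n\cdot(m_n,\xi_n)=(\overline\omega,\ldots,\overline\omega,(m_n,\xi_n))$, consisting of $h_n$ copies of $\overline\omega=(1,-1)$ followed by $(m_n,\xi_n)\in\Omega^*$, corresponds to a product of $h_n+1$ consecutive matrices $N_j$ that brings us from position $\nu_{n-1}-1$ to position $\nu_n-1$. Setting $A:=\begin{pmatrix} 2 & -1 \\ 1 & 0 \end{pmatrix}$ (so that $A^h=\begin{pmatrix} h+1 & -h \\ h & 1-h\end{pmatrix}$), the computation yields
\[\binom{X_n}{Y_n}=M_n\binom{X_{n-1}}{Y_{n-1}},\qquad M_n=\begin{pmatrix} 2m_n(h_n+1)-h_n & (2m_nh_n-h_n+1)\xi_{n-1} \\ h_n+1 & h_n\xi_{n-1}\end{pmatrix},\]
where $Y_n:=q_{\nu_n-2}$ and $\xi_{n-1}$ denotes the $\xi$-component of block $n-1$ (with the convention $\xi_0:=+1$), which enters through the $\xi_j$-factor of $N_j$ at the boundary between consecutive blocks.

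Set $r_{n-1}:=Y_{n-1}/X_{n-1}\in(0,1)$. The one-step ratio becomes $X_n/X_{n-1}=(2m_n(h_n+1)-h_n)+(2m_nh_n-h_n+1)\xi_{n-1}r_{n-1}$. If $\xi_{n-1}=+1$ both summands are nonnegative, giving $\geq 2m_n(h_n+1)-h_n\geq 2$; if $\xi_{n-1}=-1$ and $m_n\geq 2$, we get $\geq 2m_n-1\geq 3$. The only \emph{bad} configuration in which the ratio may approach $1$ is $\xi_{n-1}=-1$ together with $(m_n,\xi_n)=(1,+1)$, giving $(h_n+2)-(h_n+1)r_{n-1}$. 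The key observation is that a bad block has $\xi_n=+1$, so step $n+1$ has incoming $\xi_n=+1$ and is not bad; conversely, a bad step $n+1$ forces $\xi_n=-1$ and thus $m_n\geq 2$, so step $n$ is not bad. Bad steps are therefore never consecutive.

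The main technical step is to prove $X_{n+1}\geq 3X_{n-1}$ for every $n\geq 1$. When neither step is bad, $X_{n+1}/X_{n-1}\geq 2\cdot 2=4$; when step $n+1$ is bad but step $n$ is not, $X_{n+1}/X_{n-1}\geq 3\cdot 1=3$. The delicate case, step $n$ bad and step $n+1$ good, is handled by using the explicit update $r_n=\bigl((h_n+1)-h_nr_{n-1}\bigr)/\bigl((h_n+2)-(h_n+1)r_{n-1}\bigr)$ to write
\[\frac{X_{n+1}}{X_{n-1}}=\alpha\,\frac{X_n}{X_{n-1}}+\beta\,\frac{Y_n}{X_{n-1}}=\alpha\bigl((h_n+2)-(h_n+1)r_{n-1}\bigr)+\beta\bigl((h_n+1)-h_nr_{n-1}\bigr),\]
with $\alpha:=2m_{n+1}(h_{n+1}+1)-h_{n+1}$ and $\beta:=2m_{n+1}h_{n+1}-h_{n+1}+1$; this is an affine function of $r_{n-1}$ that is decreasing on $[0,1)$ and has infimum $\alpha+\beta=2m_{n+1}(2h_{n+1}+1)-2h_{n+1}+1\geq 3$. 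Strong induction from $X_0=q_0=1$ then yields $X_n\geq 3^{\lfloor n/2\rfloor}\geq 3^{n/3}$, hence $\hat q_n>X_n\geq 3^{n/3}$.
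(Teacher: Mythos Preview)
Your argument is correct, with one cosmetic slip at the very end: the inequality $3^{\lfloor n/2\rfloor}\geq 3^{n/3}$ fails for $n=1$ (since $\lfloor 1/2\rfloor=0<1/3$). This is harmless, because step~$1$ is never bad ($\xi_0=+1$), so your own one-step bound already gives $X_1\geq 2>3^{1/3}$, and the induction goes through; alternatively one just checks $\hat q_1=q_{\nu_1}\geq q_2\geq 3$ directly. Everything else---the matrix form of $M_n$, the trichotomy of cases, the non-consecutiveness of bad steps, and the two-step bound $X_{n+1}\geq 3X_{n-1}$---is correct as written.

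Your route is genuinely different from the paper's. The paper works with $\hat q_n=q_{\nu_n}$ itself and obtains a \emph{three}-step recursion $\hat q_n\geq 3\hat q_{n-3}$ via the short chain $q_{\nu_n}\geq 3\,q_{\nu_{n-1}-2}\geq 3\,q_{\nu_{n-3}}$, the second inequality being simple monotonicity (using $\nu_{n-1}-\nu_{n-3}\geq 2$). This yields exactly $3^{n/3}$ with almost no computation. You instead pass to the auxiliary $X_n=q_{\nu_n-1}$, carry out an explicit block-matrix analysis, isolate the single ``bad'' configuration, and prove the sharper \emph{two}-step bound $X_{n+1}\geq 3X_{n-1}$, which actually gives $\hat q_n>X_n\geq 3^{\lfloor n/2\rfloor}$, a much stronger growth rate than what the lemma claims. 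The trade-off is clear: the paper's proof is shorter and more conceptual, while yours is more laborious but delivers a better constant in the exponent and makes the mechanism (bad blocks cannot occur back-to-back) completely explicit.
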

\begin{proof}
Recall 
that $\nu_{n}=\nu_{n-1}+1+\tau(R^{n-1}(\hat\omega^+))$ for $n\in\N$. 
Using (\ref{eq: recursion p n and q n})
one can see that $\hat q_n=q_{\nu_n}\geq3\, q_{\nu_{n-1}-2}$.
In particular we get
for any $n\geq4$
\begin{eqnarray}\label{eq: recursive estimate denominator q}
\hat q_n=q_{\nu_n}
\geq 3\,q_{\nu_{n-1}-2}\geq3\,q_{\nu_{n-3}}=3\hat q_{n-3},
\end{eqnarray}
where the second inequality follows from the monotonicity of the
sequence $\{q_n\}$ because $\nu_{n-1}-\nu_{n-3}\geq2$. Now, by the
second inequality of (\ref{eq: estimates for ECF}), we have
$$\hat q_i\geq q_{i+1}\geq i+2
\hspace{.5cm}\mbox{for $i=1,2,3$}$$
and therefore from (\ref{eq: recursive estimate denominator q}) we
get the estimate for $n\in\N$ 
\begin{equation}\nonumber
\hat
q_n=q_{\nu_n}\geq
\left([n-1]_{3}+3\right)\cdot3^{\left(\left\lceil\frac{n}{3}\right\rceil-1\right)}
\geq3^{\left\lceil\frac{n}{3}\right\rceil}\geq3^{\frac{n}{3}},
\end{equation}
where $[p]_3=p\,(\mathrm{mod}\,3)$ and $\lceil p\rceil:=\min\{m\in\N:\:m\geq p\}$. 
This
concludes the proof of the Lemma. 
\end{proof}
\begin{remark}
Our proof actually gives $\hat q_n\geq\max\left\{n+2,\:3^{\frac{n}{3}}\right\}$, $n\in\N$, which can be replaced by (\ref{eq: growth of hat q n}) for $n\geq6$. However, 
(\ref{eq: growth of hat q n}) will be enough for our purposes.
\end{remark}
The following Lemma provides an upper bound for the growth of the $R$-denominators $\hat q_n$, proving that \emph{typically} (i.e. $\mu$-almost surely) they grow \emph{at most} exponentially fast. The proof is analogous to the one given by Khinchin \cite{Khinchin35} for the Euclidean continued fraction expansions.
\begin{lem}[Growth of $R$-denominators, upper bound]\label{lem: growth of hat q n, upper bound}
There exists a constant $C_{1}>0$ such that for $\mu$-almost every $\hat\omega^+\in(0,1)\smallsetminus\mathbb{Q}=\Sigma^{\N}$  the denominators $\hat q_n=\hat q_n(\hat\omega^+)$ satisfy the estimate
\begin{equation}\label{eq: growth of hat q n, upper bound}
\hat q_n\leq e^{C_{1}\, n}
\end{equation}
for all sufficiently large $n$.
\end{lem}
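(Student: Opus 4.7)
The overall strategy is Khinchin's classical one: bound the multiplicative growth of $\hat q_n$ per $R$-step by a $\mu$-integrable function of the current $\Sigma$-symbol, and then apply Birkhoff's ergodic theorem. Since $R$ is uniformly expanding with bounded distortion on surjective branches, the Folklore Theorem of Adler cited before the formula for $f$ provides not only the absolutely continuous invariant probability measure $\mu$ but also its ergodicity, so Birkhoff's theorem is available on $(\X,\mathcal{B},\mu,R)$.

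The key analytic step is a sharp one-block estimate. Write the $i$-th $\Sigma$-symbol as $\sigma_i = h_i\cdot m_i^{\pm}$, so that the $\Omega$-coordinates with indices in $(\nu_{i-1},\nu_i]$ consist of $h_i$ consecutive copies of $(1,-1)$ followed by a single $(m_i,\xi_i)$, and $\nu_i=\nu_{i-1}+h_i+1$. Inside the $(1,-1)$-run the recurrence (\ref{eq: recursion p n and q n}) collapses to $q_n = 2q_{n-1} - q_{n-2}$, so the differences $q_n - q_{n-1}$ are \emph{constant} and $\{q_{\nu_{i-1}+j}\}_{j=0}^{h_i}$ is an arithmetic progression. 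Combined with the generic bound $q_n \leq (2k_n+1)q_{n-1}$ applied at the first step of the run, this yields $q_{\nu_{i-1}+h_i} \leq 3(h_i+1)\,\hat q_{i-1}$. One more application of (\ref{eq: recursion p n and q n}), now with partial quotient $m_i$, gives
\begin{equation*}
\hat q_i \;\leq\; 3\,(h_i+1)(2m_i+1)\,\hat q_{i-1}.
\end{equation*}
Passing to logarithms, $\log \hat q_n(\hat\omega^+) \leq \sum_{i=0}^{n-1} F(R^{i}(\hat\omega^+))$, where $F(\alpha) := \log 3 + \log(h+1) + \log(2m+1)$ and $(h,(m,\pm))=\sigma_1(\alpha)$ is the first $\Sigma$-symbol.

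To apply Birkhoff we check $F \in L^1(\mu)$. By Remark \ref{remark: measure of cylinders} one has $\mu(\mathcal{C}[h\cdot m^\pm]) = O\!\left((1+h)^{-2} m^{-2}\right)$, and both series
\begin{equation*}
\sum_{h\geq 0,\,m\geq 1}\frac{\log(1+h)}{(1+h)^2 m^2},\qquad \sum_{h\geq 0,\,m\geq 1}\frac{\log(2m+1)}{(1+h)^2 m^2}
\end{equation*}
are finite, so $\int_{\X} F\,\de\mu =: C_1' < \infty$. Birkhoff's ergodic theorem applied to $F$ therefore gives $n^{-1}\log\hat q_n \to C_1'$ for $\mu$-a.e.\ $\hat\omega^+$, and choosing any $C_1 > C_1'$ yields $\hat q_n \leq e^{C_1 n}$ for all sufficiently large $n$, as required. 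The step requiring care is the sharp block estimate: the naive iteration $q_n \leq (2k_n+1)q_{n-1}$ over a $(1,-1)$-run would produce a factor $3^{h_i}$ per block, hence the cocycle $h_i\log 3$, and $\int h\,\de\mu = +\infty$ because $\mu(\{\sigma_1 = h\cdot m^\pm\})$ decays only like $h^{-2}$. Exploiting the arithmetic-progression structure along $(1,-1)$-runs---a direct manifestation of the indifferent fixed point of $T$ at $x=1$---replaces $h_i$ by $\log(1+h_i)$ and thereby restores integrability.
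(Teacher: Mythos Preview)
Your approach is correct in spirit and yields the result, but it differs from the paper's and carries two small slips worth noting.

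\textbf{Comparison with the paper.} The paper follows Khinchin's original route literally: it derives the two-sided multiplicative bound $\frac{1}{2^n}\prod_{j=1}^{2n}a_j\leq q_{\nu_n-1}\leq 6^n\prod_{j=1}^{2n}a_j$ (with $a_{2j-1}=m_j$, $a_{2j}=h_j+1$), then uses the combinatorial estimate $|E_n(s)|<\frac{4}{3}\frac{2^{4n}}{s}\sum_{j=0}^{2n-1}\frac{(\log s)^j}{j!}$ for the Lebesgue measure of $\{\prod a_j\geq s\}$, plugs in $s=e^{An}$, and finishes with Borel--Cantelli. This avoids any appeal to ergodicity of $R$. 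Your argument replaces the combinatorial tail estimate by Birkhoff's ergodic theorem applied to the integrable cocycle $F$; this is cleaner and more conceptual, at the price of invoking ergodicity (which, as you say, is indeed part of the Folklore Theorem). Both routes exploit the same structural fact you emphasise: along a $(1,-1)$-run the recursion degenerates to $q_n=2q_{n-1}-q_{n-2}$, so the contribution of a block of length $h$ is $O(h)$ multiplicatively rather than $O(3^h)$, which is precisely what makes $\log(1+h)$ integrable against the $O(h^{-2})$ tail.

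\textbf{Two minor corrections.} First, there is an off-by-one in your block bookkeeping: the $\Omega$-coordinates determined by $\sigma_i$ sit at positions $\nu_{i-1},\ldots,\nu_i-1$ (not $(\nu_{i-1},\nu_i]$), so $\hat q_i=q_{\nu_i}$ already involves $k_{\nu_i}$, which belongs to block $i{+}1$. The clean fix is the paper's: bound $q_{\nu_n-1}$ instead of $\hat q_n$, use $\hat q_n\leq q_{\nu_{n+1}-1}$, and then your Birkhoff sum shifts by one term, which is harmless. Second, Birkhoff gives $\limsup_{n}\,n^{-1}\log\hat q_n\leq \int F\,\de\mu$, not convergence to $C_1'$; of course this inequality is exactly what you need.
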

\begin{proof}
Let $\hat\omega^+=(h_1\cdot m_1^{\pm},h_2\cdot m_2^{\pm},\ldots)\in\Sigma^{\N}$. By the definition of $\nu_n$ we get
\begin{equation}\label{eq: lemma upper bound 1}
\hat q_n= q_{\nu_n}=q_{h_1+\ldots+h_n+n+1}\leq q_{h_1+\ldots+h_{n+1}+n+1}=q_{\nu_{n+1}-1}.
\end{equation}
Now, using (\ref{eq: recursion p n and q n}) one can show that $\frac{1}{2}\,m_n\,(h_n+1)\,q_{\nu_{n-1}-1}\leq q_{\nu_n-1}\leq6\,m_n\,(h_n+1)\,q_{\nu_{n-1}-1}$ and therefore
 $\frac{1}{2^n}\,m_n\,(h_n+1)\cdots m_1\,(h_1+1)\leq q_{\nu_n-1}\leq 6^n\,m_n\,(h_n+1)\cdots m_1\,(h_1+1)
 $. 
Defining $a_{2j-1}:=m_j$ and $a_{2j}:=(h_j+1)$ for $j\in\N$, the previous inequalities become 
\begin{equation}\label{eq: lemma upper bound 2}
\frac{1}{2^n}\,\prod_{j=1}^{2n}a_j\leq q_{\nu_n-1}\leq 6^n\,\prod_{j=1}^{2n}a_j.
\end{equation}
Let us show that the product $\prod_{j=1}^{2n}a_j$ is bounded by $e^{A\,n}$ Lebesgue-almost surely for some $A>0$. For $s\geq1$ define $E_n(s):=\left\{\hat\omega^+\in\Sigma^\N:\:\prod_{j=1}^{2n}a_j\geq s\right\}$. This set can be written as union of intervals of the form $J_n=\left\{\hat\omega^+\in\Sigma^\N:\:\big((\hat\omega^+)_j\big)_{j=1}^n=(h_1\cdot m_1^\pm,\ldots,h_n\cdot m_n^\pm)\right\}$ and each of these intervals has length $|J_n|\leq\frac{4}{3}\,\frac{1}{q_{\nu_n-1}^2}$. Thus, by the first inequality of (\ref{eq: lemma upper bound 2}), $|J_n|\leq\frac{4}{3}\,2^{2n}\,\prod_{j=1}^{2n}\frac{1}{a_j^2}$. 
Reasoning as in \cite{Khinchin35} (\S III.14) we obtain
\begin{eqnarray}
|E_n(s)|<\frac{4}{3}\:2^{2n}\cdot\hspace{-.4cm}\sum_{\tiny{\begin{array}{c}a_1,\ldots,a_{2n}\in\N\\a_1\cdots a_{2n}\geq s\end{array}}}
\prod_{j=1}^{2n}\frac{1}{a_j^2}<\frac{4}{3}\cdot\frac{2^{4n}}{s}\,\sum_{j=0}^{2n-1}\frac{(\log s)^j}{j!}.\nonumber
\end{eqnarray}
In particular for $s=e^{A\,n}$ by Stirling's formula one gets
\begin{eqnarray}
|E_n(e^{A\,n})|<\frac{4}{3}\,e^{n(4\log 2-A)}(2n)\frac{(A\,n)^{2n}}{(2n)!}2^{2n}\leq C_{2}\,\sqrt{n}\,e^{-n(A-2\log A-2\log2-2)},\nonumber
\end{eqnarray}
for some $C_{2}>0$. Choosing $A$ so that $(A-2\log A-2\log2-2)>0$ we give an upper bound for $|E_n(e^{A\,n})|$ by the $n$-th term of a convergent series. Thus we get $\sum_{n=1}^\infty|E_n(e^{A\,n})|<\infty$ and therefore Lebesgue-almost every $\hat\omega^+\in(0,1)$ belongs only to a finite number of $E_n(e^{A\,n})$'s. In other words, for sufficiently large $n$, $\prod_{j=1}^{2n}a_j\leq e^{A\,n}$ Lebesgue-almost surely. Now, by (\ref{eq: lemma upper bound 1}) and (\ref{eq: lemma upper bound 2}) we get
\begin{equation}\nonumber
\hat q_n\leq q_{\nu_{n+1}-1}\leq6^{n+1}\,\prod_{j=1}^{2n+2}a_j\leq6^{n+1}\,e^{A\,(n+1)}\leq e^{C_{1}\,n}
\end{equation}
for some $C_{1}>0$ for Lebesgue-almost every $\hat\omega^+\in\Sigma^\N$ and for all sufficiently large $n$. The assertion of the Lemma follows now from the absolute continuity of $\mu$ w.r.t. the Lebesgue measure on $(0,1]$.
\end{proof}

\section{Reduction to a Special Flow}\label{section: reduction to a special flow}
\subsection{Roof function}
For $\hat\omega=(\ldots,\omega_{-1},\omega_0;\omega_1,\omega_2,\ldots)$, with
$\omega_i=(k_i,\xi_i)\in\Omega$, $i\in\mathbb{Z}$, we define
\begin{eqnarray}
\psi(\hat\omega):=\sum_{i=2}^{\nu_1(\hat\omega)}\log\left(\frac{\xi_i(\hat\omega)}{(\hat
T^i \hat
\omega)^-}\right)=\sum_{i=2}^{\nu_1(\hat\omega)}\log\left(2k_i+\frac{\xi_{i-1}}{2k_{i-1}+\frac{\xi_{i-2}}{2k_{i-2}+\frac{\xi_{i-3}}{\ddots}}}\right)\label{def:
roof function psi}
\end{eqnarray}
as roof function over $\hat R$.
The reason for this definition will be clear from Lemma \ref{lemma: approximation by Birkhoff sums}.
\begin{remark}Recall that $\nu_0(\hat\omega)=1$ for every
$\hat\omega\in D(\hat R)$ and that $\nu_i(\hat
R^j\omega)=\nu_{i+j}(\hat\omega)$ for every 
$i,j\geq0$. Hence we have
\begin{eqnarray}
\psi(\hat R^{j}\hat\omega)=\sum_{i=\nu_0(\hat
R^{j}\hat\omega)+1}^{\nu_1(\hat
R^{j}\hat\omega)}\log\left(\frac{\xi_i(\hat
R^{j}\hat\omega)}{(\hat T^i\hat
R^{j}\hat\omega)^-}\right)=\sum_{i=\nu_{j}(\hat\omega)+1}^{\nu_{j+1}(\hat\omega)}\log\left(\frac{\xi_i(\hat\omega)}{(\hat
T^i\hat\omega)^-}\right).\label{def: roof function
psi(R j-1)}
\end{eqnarray}
\end{remark}
In order to construct the special flow as described in Section \ref{section: special flows}, we have to check that our roof function is integrable. 
\begin{lem}\label{lemma: psi is integrable}
The roof function $\psi:D(\hat R)\rightarrow\mathbb{R}^{+}$ is $\hat\mu$-integrable.
\end{lem}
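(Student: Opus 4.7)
The plan is to give a pointwise upper bound for $\psi$ in terms of the first letters of $\hat\omega^+$, controlled enough to integrate term by term using Remark \ref{remark: measure of cylinders}. Write $\sigma_1(\hat\omega^+) = h\cdot m^\pm$, so that $\omega_1 = \cdots = \omega_h = (1,-1)$, $\omega_{h+1} = (m,\pm 1)$, and $\nu_1 = h+2$. Let $k'$ denote the first $k$-coordinate of $\sigma_2(\hat\omega^+)$ and set $y_j := (\hat T^j\hat\omega)^-$. The identity $\xi_i/y_i = 2k_i + y_{i-1}$ splits $\psi$ into three contributions,
\[
\psi(\hat\omega) \;=\; \sum_{i=2}^{h}\log(2 + y_{i-1}) \;+\; \log(2m + y_h) \;+\; \log(2k' + y_{h+1}),
\]
where the first sum is absent when $h\leq 1$ and the middle term is absent when $h=0$.

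The heart of the argument is to telescope the bulk sum using the presence of the parabolic fixed point at $1$ of $T$. For $j=1,\ldots,h$ the equation $\omega_j=(1,-1)$ gives $y_j = -1/(2 + y_{j-1})$; substituting $y_j = -1 + 1/a_j$ converts this into the linear recursion $a_j = a_{j-1}+1$, so $a_j = a_0 + j$ with $a_0 = 1/(1+y_0)$. Since the natural-extension identification of the previous subsection places $y_0 = \hat\omega^- \in (-\tfrac13,1]$, we have $a_0\in[\tfrac12,\tfrac32)$. Then $2 + y_{i-1} = (a_0+i)/(a_0+i-1)$, and the bulk telescopes to
\[
\sum_{i=2}^{h}\log(2+y_{i-1}) \;=\; \log\frac{a_0+h}{a_0+1} \;\leq\; \log(h+1).
\]
A routine induction based on $|2k_j + y_{j-1}|\geq 2-1 = 1$ shows $|y_j|\leq 1$ for all $j\geq 0$, so the two boundary terms are dominated by $\log(2m+1)$ and $\log(2k'+1)$ respectively, yielding the pointwise estimate
\[
\psi(\hat\omega)\;\leq\; \log(h+1) + \log(2m+1) + \log(2k'+1).
\]

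By Remark \ref{remark: measure of cylinders} we have $\hat\mu(\mathcal{C}[h\cdot m^\pm])\leq C(h+1)^{-2}m^{-2}$, and the first two terms integrate to a finite value because $\sum_{h\geq 0,\,m\geq 1}(\log(h+1)+\log(m+1))/((h+1)^2 m^2)<\infty$. For the last term, the $\hat R$-invariance of $\hat\mu$ and the fact that $k'$ depends only on $\sigma_2(\hat\omega^+)$ let us replace the integrand by $\log(2k_1+1)$, where $k_1$ is the first $k$-coordinate of $\sigma_1$; this equals $\log 3$ on $\{h\geq 1\}$ and $\log(2m+1)$ on $\{h=0\}$, and is again integrable by the same cylinder estimate. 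Hence $\int\psi\,d\hat\mu<\infty$.

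The main obstacle is that the naive bound $\log(2+y_{i-1})\leq\log 3$ per bulk term produces only $\psi=O(h)$, which fails to be integrable because the tail $\hat\mu(\mathcal{C}[h\cdot m^\pm])\asymp(h+1)^{-2}m^{-2}$ makes $\sum h\cdot\hat\mu(\mathcal{C}[h\cdot m^\pm])\asymp\sum 1/h$ diverge. The decisive gain is that along a run of $(1,-1)$-letters the quantities $y_j$ are pushed toward the neutral fixed point $y=-1$ of $y\mapsto -1/(2+y)$, and this convergence (made uniform in the past by the bounded range of $y_0$ coming from the identification of $D(\hat R)$ with $(0,1]\times(-\tfrac13,1]$) is fast enough to telescope the bulk contribution down to $O(\log h)$.
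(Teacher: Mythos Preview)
Your proof is correct and follows essentially the same route as the paper's: decompose $\psi$ into a bulk piece (the run of $(1,-1)$-letters), a term at $i=\tau+1$ carrying the factor $2m$, and a final term at $i=\tau+2$ handled via $\hat R$-invariance; telescope the bulk to an $O(\log h)$ quantity; then integrate each piece against the cylinder bounds of Remark~\ref{remark: measure of cylinders}. The only cosmetic difference is that you obtain the telescoping via the explicit linearization $y_j=-1+1/a_j$, $a_j=a_0+j$, whereas the paper writes down the closed form $\xi_i/(\hat T^i\hat\omega)^-=\big((i+1)-ix\big)/\big(i-(i-1)x\big)$ directly and observes that the resulting sum is telescopic; both yield the same $O(\log h)$ control, and your closing paragraph explaining why the naive $O(h)$ bound would fail is a nice piece of motivation that the paper omits.
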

\begin{proof}
Let us denote $\tau=\tau(\hat\omega)=\tau(\hat\omega^+)$ and recall that $\nu_1(\hat\omega)=\tau+2$. We can write $\psi=\psi_0+\psi_1+\psi_2$, where $\psi_{0,1,2}:D(\hat R)\rightarrow\mathbb{R}_{\geq0}$\,,
\begin{eqnarray}
\psi_0(\hat\omega)&=&\begin{cases}
      \displaystyle{\sum_{i=2}^{\tau}\log\left(\frac{\xi_i(\hat\omega)}{(\hat T^i \hat\omega)^-}\right)}& \text{if $\tau\geq2$}, \\
      \hspace{1cm}0& \text{otherwise}.
\end{cases}\hspace{.3cm}\mbox{and}\nonumber\\
\psi_j(\hat\omega)&=&\begin{cases}
      \displaystyle{\log\left(\frac{\xi_{\tau+j}(\hat\omega)}{(\hat T^{\tau+j} \hat\omega)^-}\right)}& \text{if ($j=1$ and $\tau\geq1$) or $j=2$}, \\
      \hspace{1cm}0& \text{if $j=1$ and $\tau=0$}.
\end{cases}\nonumber 
\end{eqnarray}
Let $-\frac{1}{3}<x<1$ such that $x=\les(0,\xi_0);(k_0,\xi_{-1}),(k_{-1},\xi_{-2}),\ldots\res$.
If $\tau\geq2$, we have, for $i=2,\ldots,\tau$
$$\frac{\xi_i(\hat\omega)}{(\hat T^i \hat\omega)^-}=\frac{(i+1)-i\,x}{i-(i-1)x}\hspace{.4cm}\mbox{and therefore}\hspace{.4cm}\psi_0(\hat\omega)=\log\left(\frac{(\tau+1)-\tau\,x}{2-x}\right)$$
because the sum defining $\psi_0$ is telescopic. Using the fact that $\tau=h$ iff $\hat\omega\in\hat{\mathcal{C}}[h\cdot m^\pm]$, the definition of the measure $\hat\mu$ and the estimates given in Remark \ref{remark: measure of cylinders} we get
\begin{eqnarray}
\int_0^1\psi_0(\hat\omega)\,\mathrm{d}\hat\mu(\hat\omega)&=&\sum_{\tiny{
\left.\begin{array}{c}h\cdot m^{\pm}\in\Sigma\\h\geq2\end{array}\right.}}
\int_{\hat{\mathcal{C}}[h\cdot m^\pm]}
\log\left(\frac{(h+1)-h\,x}{2-x}\right)\mathrm{d}\hat\mu(\hat\omega)\leq\nonumber\\
&\leq&\sum_{h\geq2}\log(2h+1)\hspace{-.1cm}\sum_{m^\pm\in\Omega^*}\mu\left(\mathcal{C}[h\cdot m^\pm]\right)\leq\nonumber\\
&\leq&\frac{6}{\log3}\sum_{h\geq2}\frac{\log(2h+1)}{4h^2+8h+3}\sum_{m\geq1}\frac{1}{m^2}\leq3.\nonumber
\end{eqnarray}
\begin{eqnarray}
\int_0^1\psi_1(\hat\omega)\,\mathrm{d}\hat\mu(\hat\omega)&=&\sum_{\tiny{
\left.\begin{array}{c}h\cdot m^{\pm}\in\Sigma\\h\geq1\end{array}\right.}}
\int_{\hat{\mathcal{C}}[h\cdot m^\pm]}\log\left(2m-\frac{h-(h-1)x}{(h+1)-h\,x}\right)\mathrm{d}\hat\mu(\hat\omega)\leq\nonumber\\
&\leq&\sum_{m^\pm\in\Omega^*}\log(2m)\sum_{h\geq1}\mu\left(\mathcal{C}[h\cdot m^\pm]\right)\leq\nonumber\\
&\leq&\frac{6}{\log3}\sum_{m\geq1}\frac{\log(2m)}{m^2}\sum_{h\geq1}\frac{1}{4h^2+8h+3}\leq2.\nonumber
\end{eqnarray}
Let us estimate the integral of $\psi_2$.
Notice that $k_1(\hat\omega)=m$ for $\hat\omega\in\hat{\mathcal{C}}[m^\pm]$ and  $k_1(\hat\omega)=1$ for $\hat\omega\in\hat{\mathcal{C}}[h\cdot m^\pm]$ with $h\in\N$. Moreover, $B(1,-1)=\left[\frac{1}{2},1\right]$ can be written as the disjoint union of $\mathcal{C}[h\cdot m^\pm]$, $h\in\N$, $m\in\Omega^*$, and its $\mu$-measure is $\mu\left(B(1,-1)\right)=\log\frac{5}{3}$.
Now, using also the $\hat R$-invariance of the measure $\hat\mu$ we get
\begin{eqnarray}
\int_0^1\psi_2(\hat\omega)\,\mathrm{d}\hat\mu(\hat\omega)&\leq&\int_0^1\log(2k_{\nu_1}+1)\,\mathrm{d}\hat\mu(\hat\omega)=\int_0^1\log(2k_1+1)\,\mathrm{d}\hat\mu(\hat\omega)\leq\nonumber\\
&\leq&\frac{6}{\log3}\sum_{m\geq1}\frac{\log(2m+1)}{m^2}+\log3\cdot\mu\left(B(1,-1)\right)\leq15\nonumber
\end{eqnarray}
and this concludes the proof of the Lemma.
\end{proof}
\begin{lem}\label{lemma: delta and M} 
Let $\mathcal{C}\in\frak{C}_{n_1,n_2}$, with $n_1\in\N$ and $n_2\geq2$, be any cylinder. Then there exist $\delta=\delta(\mathcal{C})>0$ and $M=M(\mathcal{C})>0$ such that
$$\inf_{\hat\omega\in\mathcal{C}}\psi(\hat\omega)\geq\delta\hspace{.5cm}\mbox{and}\hspace{.5cm}\sup_{\hat\omega\in\mathcal{C}}\psi(\hat\omega)\leq M.$$
\end{lem}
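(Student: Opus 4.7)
The plan is to rewrite each summand in the definition of $\psi$ using the identity $\xi_i/(\hat T^i\hat\omega)^- = 2k_i + (\hat T^{i-1}\hat\omega)^-$, so that
$\psi(\hat\omega) = \sum_{i=2}^{\nu_1}\log\bigl(2k_i + (\hat T^{i-1}\hat\omega)^-\bigr)$.
On $\mathcal{C}$ the first letter $\sigma_1$ is prescribed, so $\tau = h_1$ and $\nu_1 = h_1+2$ are constants on $\mathcal{C}$; because $n_2 \ge 2$ the letter $\sigma_2$ is also fixed, and consequently every $\omega_i$ with $2 \le i \le \nu_1$ (in particular every $k_i$) is determined by $\mathcal{C}$. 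Since every point of $D(\hat R) = \Sigma^\Z$ corresponds to a bi-sided $\Omega$-sequence with no $\overline{(1,-1)}$-tail on either side, one has $|(\hat T^{j}\hat\omega)^-| < 1$ for every $j \in \Z$. Each log-argument is thus bounded above by $2k_i + 1 \le 2K+1$, where $K := \max_{2 \le i \le \nu_1} k_i$, and summing the $h_1+1$ terms yields the upper bound $\psi(\hat\omega) \le (h_1+1)\log(2K+1) =: M(\mathcal{C})$.

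For the lower bound, the same inequality $|(\hat T^{j}\hat\omega)^-| < 1$ combined with $k_i \ge 1$ gives each log-argument $> 1$, so every summand of $\psi$ is positive and it is enough to bound from below the very last one, $\psi_2(\hat\omega) = \log\bigl(2k_{\nu_1} + (\hat T^{\tau+1}\hat\omega)^-\bigr)$. Using the recursive identity $(\hat T^{\tau+1}\hat\omega)^- = \xi_1/\bigl(2m_1 + (\hat T^{\tau}\hat\omega)^-\bigr)$, where $(m_1,\xi_1) = (k_{\tau+1},\xi_{\tau+1}) \in \Omega^*$ is the ``special'' letter of $\sigma_1$, I split according to the sign of $\xi_1$. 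If $\xi_1 = +1$ then $(\hat T^{\tau+1}\hat\omega)^- > 0$ and the log-argument exceeds $2k_{\nu_1} \ge 2$. If $\xi_1 = -1$ then the admissibility constraint $(1,-1) \notin \Omega^*$ forces $m_1 \ge 2$, hence $2m_1 + (\hat T^{\tau}\hat\omega)^- > 3$ and $|(\hat T^{\tau+1}\hat\omega)^-| < 1/3$, so the log-argument exceeds $2k_{\nu_1} - 1/3 \ge 5/3$. In either case $\psi_2(\hat\omega) > \log(5/3)$, so one may take $\delta(\mathcal{C}) := \log(5/3)$ (which is in fact cylinder-independent).

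The delicate point is that the first two pieces $\psi_0$ and $\psi_1$ of the decomposition introduced in Lemma \ref{lemma: psi is integrable} can both be made arbitrarily small on $\mathcal{C}$: an inspection of the explicit formula $\psi_0 = \log\bigl(((\tau+1)-\tau x)/(2-x)\bigr)$ and a similar computation for $\psi_1$ show that both tend to $0$ as $x = \hat\omega^- \to 1$, and $\hat\omega^-$ can be pushed arbitrarily close to $1$ on $\mathcal{C}$ since only finitely many $\Sigma$-coordinates are fixed. Hence the lower bound must come entirely from $\psi_2$, and it is precisely the rule excluding $(1,-1)$ from $\Omega^*$ that keeps $\psi_2$ bounded below.
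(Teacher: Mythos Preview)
Your proof is correct and follows the same route as the paper's: both observe that fixing $\sigma_1,\sigma_2$ pins down $\nu_1=h_1+2$ and all the integers $k_i$ for $2\le i\le\nu_1$, bound the finitely many summands above using $|(\hat T^{j}\hat\omega)^-|<1$, and extract the lower bound $\log(5/3)$ from the last term $\psi_2$ via the constraint $(1,-1)\notin\Omega^*$; the paper merely organizes this as an explicit four-case split over cylinders in $\frak{C}^+_2$, arriving at the same $\delta=\log(5/3)$. One small inaccuracy worth flagging (though it does not affect the argument): in your closing remark, $\hat\omega^-$ cannot in general be pushed arbitrarily close to $1$ on $\mathcal{C}$, since the fixed past letters $\sigma_0,\dots,\sigma_{-n_1}$ already confine $\hat\omega^-$ to a proper subinterval of $(-\tfrac13,1]$; the heuristic is nonetheless true in spirit, since on \emph{some} cylinders $\psi_0+\psi_1$ has infimum zero, so $\psi_2$ is indeed where the uniform lower bound must come from.
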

\begin{proof}
Using the same notations introduced in the proof of Lemma \ref{lemma: psi is integrable}, the statement of Lemma \ref{lemma: delta and M} follows easily from the following elementary estimates for $\psi_{1,2,3}$ on cylinders in $\frak{C}^+_2$.\\
If $\hat\omega\in\mathcal{C}[0\cdot m_1^\pm,0\cdot m_2^\pm]$, 
then $\psi_0(\hat\omega)=\psi_1(\hat\omega)=0$ and 
$0<\log\frac{9}{5}\leq\log\left(\frac{10m_2-1}{5}\right)\leq\psi_2(\hat\omega)\leq\log\left(2m_2+1\right)$.\\
If $\hat\omega\in\mathcal{C}[0\cdot m_1^\pm,h_2\cdot m_2^\pm]$, $h_2\geq1$, then 
$\psi_0(\hat\omega)=\psi_1(\hat\omega)=0$ and 
$0<\log\frac{9}{5}\leq\psi_2(\hat\omega)\leq\log3$.\\
If $\hat\omega\in\mathcal{C}[h_1\cdot m_1^\pm,0\cdot m_2^\pm]$, $h_1\geq1$,  
then $0\leq\psi_0(\hat\omega)\leq\log\left(2h_1+1\right)$, $0\leq\psi_1(\hat\omega)\leq\log\left(2 m_1\right)$ and 
$0<\log\frac{5}{3}\leq\log\left(\frac{6m_2-1}{3}\right)\leq\psi_2(\hat\omega)\leq\log\left(2m_2+1\right)$.\\
If $\hat\omega\in\mathcal{C}[h_1\cdot m_1^\pm,h_2\cdot m_2^\pm]$, $h_{1,2}\geq1$, 
then $0\leq\psi_0(\hat\omega)\leq\log\left(2h_1+1\right)$, $0\leq\psi_1(\hat\omega)\leq\log\left(2 m_1\right)$ and 
$0<\log\frac{5}{3}\leq\psi_2(\hat\omega)\leq\log3$.
\end{proof}

\subsection{Mixing of the Special Flow}
Let us consider the special flow $\{\Phi_t\}_{t\in\mathbb{R}}$ built over $\hat R$ under the roof function $\psi$. Set $I=\int_{\mathcal{X}}\psi\,\mathrm{d}\hat\mu$ (it is finite by Lemma \ref{lemma: psi is integrable}) and let $\tilde\mu=I^{-1}\hat\mu\times\lambda$ be the $\Phi_t$-invariant probability measure on $D_\Phi$. Recall that $\{\Phi_t\}_{t\in\mathbb{R}}$ is said to be \emph{mixing} if, for all Borel subsets $\mathrm A$ and $\mathrm{B}$ of $D_\Phi$, we have $$\lim_{t\rightarrow\infty}\tilde\mu\left(\Phi_{-t}(\mathrm{A})\cap\mathrm{B}\right)=\tilde\mu(\mathrm{A})\,\tilde\mu(\mathrm{B}).$$
\begin{prop}\label{prop: the flow is mixing}
The flow $\{\Phi_t\}_{t\in\mathbb{R}}$ is mixing.
\end{prop}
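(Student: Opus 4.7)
The plan is to follow the Sinai--Ulcigrai strategy \cite{Sinai-Ulcigrai07}, adapting it to our jump transformation $\hat R$. The base map $\hat R$ is the natural extension of a uniformly expanding map $R$ with bounded distortion and a countable Markov partition; it is thus measure-theoretically isomorphic to a countable Bernoulli shift and, in particular, a $K$-automorphism. The roof $\psi$ is $\hat\mu$-integrable by Lemma \ref{lemma: psi is integrable} and bounded above and bounded away from zero on every cylinder in $\frak{C}_{n_1,n_2}$ with $n_2\geq 2$ by Lemma \ref{lemma: delta and M}. These are the standard hypotheses for Sinai's mixing criterion for special flows over hyperbolic bases.

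By a standard density argument it suffices to prove $\tilde\mu(\Phi_{-t}\mathrm A\cap\mathrm B)\to\tilde\mu(\mathrm A)\,\tilde\mu(\mathrm B)$ for ``rectangles'' $\mathrm A=\hat{\mathcal C}_A\times J_A$ and $\mathrm B=\hat{\mathcal C}_B\times J_B$ with $\hat{\mathcal C}_{A,B}\in\frak{C}_{n_1,n_2}$, $n_2\geq 2$, and $J_{A,B}\subset\mathbb R$ bounded intervals. From (\ref{eq: action special flow Phi_t}) one obtains
\[
\tilde\mu(\Phi_{-t}\mathrm A\cap\mathrm B)=\frac{1}{I}\sum_{n\geq 0}\int_{\hat{\mathcal C}_B\cap \hat R^{-n}\hat{\mathcal C}_A}\lambda\!\left(\{y\in J_B:\,y+t-S_n(\psi,\hat R)(x)\in J_A\}\right)\mathrm d\hat\mu(x).
\]
By Birkhoff's theorem the only values of $n$ that contribute have $n$ in a window of size $o(t)$ around $t/I$. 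The $K$-property of $\hat R$ then allows one to asymptotically decouple the conditions $x\in\hat{\mathcal C}_B$ and $\hat R^n x\in\hat{\mathcal C}_A$, producing the factor $\hat\mu(\hat{\mathcal C}_A)\,\hat\mu(\hat{\mathcal C}_B)$. What remains is to show that the vertical condition contributes the expected Lebesgue factor $\lambda(J_A)\lambda(J_B)/I$ in the limit, which reduces to asymptotic equidistribution of $S_n(\psi,\hat R)(x)$ on compact intervals of $\mathbb R$, uniformly in the conditioning.

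The central step is therefore a local limit theorem for the Birkhoff sums of $\psi$ under $R$. One studies the one-parameter family of Ruelle--Perron--Frobenius operators
\[
\mathcal L_s f(\alpha)=\sum_{R(\beta)=\alpha}\frac{e^{-s\psi(\beta)}}{|R'(\beta)|}\,f(\beta),\qquad s\in\mathbb C,
\]
acting on a Banach space of observables of bounded variation (or H\"older along each cylinder). Uniform expansion and bounded distortion of $R$ yield quasi-compactness with $1$ as a simple leading eigenvalue at $s=0$. The \emph{main obstacle} is to verify the \emph{non-arithmeticity} of $\psi$, i.e.\ that $\psi$ is not cohomologous to a function valued in a discrete subgroup $c\,\mathbb Z\subset\mathbb R$; this excludes peripheral eigenvalues of $\mathcal L_{is}$ for $s\neq 0$. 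I would check this by exhibiting two purely periodic ECF-expansions for which the averaged values of $\psi$ around the corresponding periodic orbits of $\hat R$ are $\mathbb Q$-linearly independent, using the explicit form (\ref{def: roof function psi}) and the connection with the theta group recorded in \cite{Kraaikamp-Lopes96}. Standard analytic perturbation theory then delivers the local limit theorem, whose combination with the $K$-property decoupling and the summation in $n$ yields the product formula $\tilde\mu(\mathrm A)\,\tilde\mu(\mathrm B)$ in the limit, proving mixing.
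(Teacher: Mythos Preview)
Your route is genuinely different from the paper's. The paper does not use transfer operators or a local limit theorem at all; following Dinaburg--Sinai, it proves mixing by showing directly that $\{\Phi_t\}$ is a $K$-flow via the geometry of its invariant foliations. Concretely, it writes down the local unstable leaf (a segment in the $\hat\omega^+$-direction, since $\psi$ depends only on $(\ldots,\sigma_0;\sigma_1,\sigma_2)$) and the local stable leaf through $(\hat\omega^{(0)},y_0)$, the latter given by the explicit curve
\[
y=y_0+\log\frac{2k_1+(\hat\omega^{(0)})^-}{2k_1+\hat\omega^-}+\log\frac{1+(\hat\omega^{(0)})^+\cdot\hat\omega^-}{1+(\hat\omega^{(0)})^+\cdot(\hat\omega^{(0)})^-},
\]
obtained from telescopic cancellations in $S_n(\psi,\hat R)$ together with the identity of Corollary~\ref{cor: backward ecf in terms of forward convergents}. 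A short elementary computation then shows that for a set of positive measure of nearby points $(\hat\omega,y)$ one can solve a two-equation system joining $(\hat\omega^{(0)},y_0)$ to $(\hat\omega,y)$ by a stable/unstable/stable polygonal path, i.e.\ the pair of foliations is non-integrable. This forces the Pinsker partition to be trivial, hence the flow is $K$ and in particular mixing. The whole argument is explicit and uses nothing beyond the algebra of ECF convergents.

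Your spectral approach is a legitimate alternative in principle, but as written it has two concrete gaps you would have to close. First, the roof $\psi$ in (\ref{def: roof function psi}) depends on the \emph{past} through $(\hat T^i\hat\omega)^-$, so it is not a function on the one-sided shift and your operator $\mathcal L_s$ on $R$ is not yet defined; you would need Bowen's trick to replace $\psi$ by a cohomologous roof depending only on $\hat\omega^+$ and then verify that this roof retains enough regularity for quasi-compactness on a countable-alphabet BV or H\"older space (the exponential estimates behind Lemmata~\ref{lemma: approximation by Birkhoff sums}--\ref{lem: exp estimate on a cylinder} suggest this is feasible, but it is not automatic). Second, the non-arithmeticity of $\psi$ is only announced, not checked: you would have to actually exhibit periodic orbits of $\hat R$ and compute the corresponding Birkhoff sums of $\psi$ to see that they do not all lie in a lattice. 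The paper's non-integrability computation is precisely the geometric counterpart of this step, and it is carried out in full there with a couple of lines of algebra; that is what the explicit formulae for the leaves buy.
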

Proposition \ref{prop: the flow is mixing} is proven in Appendix A. 

\subsection{Approximation by Birkhoff sums}
The following Lemma shows that $\log\hat q_n(\hat\omega)$ can be approximated by a Birkhoff sum of the function $\psi$ along the trajectory of $\hat\omega$ under $\hat R$. Moreover this approximation is uniform in $\hat\omega$ and exponentially accurate as $n\rightarrow\infty$. Define
\begin{eqnarray}
g_n(\hat\omega)&:=&\log \hat q_n(\hat\omega)-S_{n}(\psi,\hat
R)(\hat\omega),\hspace{.6cm}n\geq0.\label{def: gn}
\end{eqnarray}
\begin{lem}\label{lemma: approximation by Birkhoff sums}
There exists a function $g$ on $D(\hat R)$ such that $g_n$ converges to $g$ uniformly in $\hat \omega$ and exponentially fast in $n$, i.e.
\begin{equation}\label{eq: statement lemma birkhoff sum denominators}
\log\hat q_n(\hat\omega)=S_{n}(\psi,\hat R)(\hat\omega)+g(\hat\omega)+\varepsilon_n(\hat\omega),\hspace{.5cm}\sup_{\hat\omega\in D(\hat R)}\left|\varepsilon_n(\hat\omega)\right|=\mathcal{O}(3^{-\frac{n}{3}}).
\end{equation}
More precisely: for $n\geq4$ we have $\sup_{\hat\omega\in D(\hat R)}\left|\varepsilon_n(\hat\omega)\right|\leq C_3\,3^{-\frac{n}{3}}$, for some $C_3>0$.
\end{lem}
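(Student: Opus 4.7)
The plan is to build $g$ and the error $\varepsilon_n$ by comparing $\log q_{\nu_n}$ to $S_n(\psi,\hat R)$ term by term, using the three-term recurrence for $q_i$ together with a standard continued-fraction contraction estimate.

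First I would exploit the recurrence $q_i = 2k_i\, q_{i-1} + \xi_{i-1}\, q_{i-2}$. Dividing by $q_{i-1}$ and iterating down to $q_1/q_0 = 2k_1$ gives the finite continued fraction
\[
A_i(\hat\omega) := \frac{q_i}{q_{i-1}} = 2k_i + \cfrac{\xi_{i-1}}{2k_{i-1} + \cfrac{\xi_{i-2}}{\ddots + \cfrac{\xi_1}{2k_1}}}.
\]
Telescoping, $\log \hat q_n = \log q_{\nu_n} = \log(2k_1) + \sum_{i=2}^{\nu_n}\log A_i$. On the other hand, by the expression for $\psi(\hat R^{j}\hat\omega)$ given in (\ref{def: roof function psi(R j-1)}), the Birkhoff sum equals
\[
S_n(\psi,\hat R)(\hat\omega) = \sum_{i=2}^{\nu_n}\log B_i(\hat\omega),\qquad B_i := \frac{\xi_i}{(\hat T^i\hat\omega)^-},
\]
where $B_i$ is the same expression as $A_i$ but with the innermost denominator $2k_1$ replaced by the full infinite tail $2k_1 + \xi_0/(2k_0 + \cdots)$. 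Hence
\[
\log\hat q_n - S_n(\psi,\hat R) = \log(2k_1) + \sum_{i=2}^{\nu_n}\log\frac{A_i}{B_i},
\]
suggesting $g(\hat\omega) = \log(2k_1) + \sum_{i=2}^{\infty}\log(A_i/B_i)$ and $\varepsilon_n(\hat\omega) = -\sum_{i=\nu_n+1}^{\infty}\log(A_i/B_i)$.

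The central analytic step is the uniform pointwise bound $|\log(A_i/B_i)| \le C/q_{i-1}^2$. I would view both $A_i$ and $B_i$ as $\xi_i/F_i(\cdot)$, where $F_i = f_{\omega_i}\circ\cdots\circ f_{\omega_1}$ and $f_{(k,\xi)}(x)=\xi/(2k+x)$, evaluated at $0$ and at $\hat\omega^-$ respectively. Because $\hat\omega^- \in (-1/3,1]$ and every $k_j\ge 1$, the intermediate values $F_{j-1}(x)$ satisfy $|2k_j+F_{j-1}(x)|\ge 5/3$ along the whole segment from $0$ to $\hat\omega^-$. The product rule for derivatives of the LFTs then gives the standard continued fraction contraction $|F_i(\hat\omega^-) - F_i(0)| \le C\,|\hat\omega^-|/q_{i-1}^2$, hence $|A_i - B_i| \le C'/q_{i-1}^2$; combined with the uniform lower bound $B_i \geq 2k_i - 1 \geq 1$, this yields the claimed estimate $|\log(A_i/B_i)|\le C''/q_{i-1}^2$ uniformly in $\hat\omega$.

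Finally I would bound the tail $|\varepsilon_n|$ by breaking the sum over $i>\nu_n$ into the $\hat R$-blocks $(\nu_k,\nu_{k+1}]$ for $k\ge n$. Inside each block, the $\omega_i$ are $\overline\omega=(1,-1)$ up to the last slot, so the recurrence gives $q_i - q_{i-1} = q_{\nu_k}-q_{\nu_k - 1}\geq 1$, i.e. $q_i$ increases at least linearly from $\hat q_k$. Therefore
\[
\sum_{i=\nu_k+1}^{\nu_{k+1}}\frac{1}{q_{i-1}^2} \le \sum_{j=0}^{\infty}\frac{1}{(\hat q_k + j)^2} \le \frac{1}{\hat q_k},
\]
and summing over $k\ge n$, using the uniform lower bound $\hat q_k \ge 3^{k/3}$ from the preceding lemma, gives $\sum_{k\ge n}\hat q_k^{-1} \le C_3\, 3^{-n/3}$. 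This proves both that the series defining $g$ converges absolutely (so $g$ is well-defined and bounded on $D(\hat R)$) and the advertised uniform exponential decay $\sup_{\hat\omega}|\varepsilon_n(\hat\omega)|\le C_3\,3^{-n/3}$. The main obstacle I anticipate is the bookkeeping around the LFT contraction estimate and the block decomposition: the raw bound $q_i\ge i+1$ is too weak to sum exponentially, so it is essential to exploit the lower bound on $\hat q_k$ rather than on the full sequence $q_i$.
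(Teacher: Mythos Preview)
Your overall architecture matches the paper's: write $\log\hat q_n - S_n(\psi,\hat R) = \log(2k_1)+\sum_{i=2}^{\nu_n}\log(A_i/B_i)$, bound the tail block by block, and invoke $\hat q_k\ge 3^{k/3}$ to sum the resulting geometric series. The paper does the same decomposition, phrasing it as $g_{n+1}-g_n=\sum_{i=\nu_n+1}^{\nu_{n+1}}\log(r_i\cdot(\hat T^i\hat\omega)^-/\xi_i)$, which is exactly your $\sum\log(A_i/B_i)$.

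There is, however, a genuine gap in your justification of the pointwise bound. The claim $|2k_j+F_{j-1}(x)|\ge 5/3$ along the whole segment is false: already for $\omega_1=(1,-1)$ and $x=0$ one has $F_1(0)=-1/2$ and hence $2k_2+F_1(0)=3/2<5/3$, and a long string of $(1,-1)$'s drives these factors arbitrarily close to $1$. So bounding the LFT derivative factor by factor yields no useful contraction in the indifferent region. Your conclusion $|A_i-B_i|\le C/q_{i-1}^2$ is nonetheless correct, but it needs the \emph{product} identity rather than a termwise bound: iterating $|A_i-B_i|=|A_{i-1}-B_{i-1}|/(A_{i-1}B_{i-1})$ gives $|A_i-B_i|=|\hat\omega^-|/\big(\prod_{j<i}A_j\cdot\prod_{j<i}B_j\big)$, and the two products telescope to $q_{i-1}$ and $q_{i-1}+p_{i-1}\hat\omega^-\ge\tfrac{2}{3}q_{i-1}$ respectively. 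With this repair your block estimate and tail sum go through as written.

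For comparison, the paper sidesteps this point by never aiming for a $1/q_{i-1}^2$ bound. It uses only the weak ECF estimate $|\beta-\gamma|\le 1/q_j(\gamma)$ applied to the \emph{reversed} sequence $\gamma$, together with $q_j(\gamma)\ge\hat q_{n-1}(\gamma)\ge 3^{(n-1)/3}$; summability over the indifferent block is then recovered by an explicit telescoping computation yielding the factor $1/(4l^2+16l+15)$, and the last two indices of each block are handled separately. Your route, once the product identity is put in place, is cleaner because it treats all indices $i$ uniformly and avoids the case analysis.
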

\begin{proof}
Let $r_n=\frac{q_n}{q_{n-1}}$, $n\in\N$. Since $q_0=1$ we have
$q_n=\prod_{i=1}^n r_i$. From (\ref{eq: recursion p n and q n}) we
get
\begin{eqnarray}
\hspace{.1cm}r_{i}=2k_{i}+\frac{\xi_{i-1}}{r_{i-1}}=2k_{i}+\frac{\xi_{i-1}}{2k_{i-1}+\frac{\xi_{i-2}}{2k_{i-2}+\ldots+\frac{\xi_2}{2k_2+\frac{\xi_1}{2k_1}}}}=\les(k_{i},\xi_{i-1}),(k_{i-1},\xi_{i-2}),\ldots,(k_2,\xi_1),(k_1,*)\res^{-1}\nonumber
\end{eqnarray}
and hence
\begin{equation}\label{eq: log q n}
\log \hat q_n=-\sum_{i=1}^{\nu_n(\hat\omega)}\log
\les(k_{i},\xi_{i-1}),(k_{i-1},\xi_{i-2}),\ldots,(k_2,\xi_1),(k_1,*)\res.
\end{equation}
From (\ref{def: roof function psi(R j-1)}) we get
\begin{eqnarray}
\hspace{-.2cm}S_{n}(\psi,\hat R)(\hat\omega)&\hspace{-.25cm}=&\hspace{-.25cm}\sum_{j=0}^{n-1}\psi(\hat
R^{j}\hat\omega)=
\sum_{j=0}^{n-1}\left(\sum_{i=\nu_j(\hat\omega)+1}^{\nu_{j+1}(\hat\omega)}\log\left(\frac{\xi_i(\hat\omega)}{(\hat
T^i\hat\omega)^-}\right)\right)\hspace{-.1cm}.\label{eq: Sn(varphi,T)(hat
omega)}
\end{eqnarray}
From the definition (\ref{def: gn}) and (\ref{eq: log q
n}-\ref{eq: Sn(varphi,T)(hat omega)}) we obtain
\begin{eqnarray}
(g_{n+1}-g_n)(\hat
\omega)=\sum_{i=\nu_n(\hat\omega)+1}^{\nu_{n+1}(\hat\omega)}\log
r_{i}(\hat\omega)-\psi(\hat
R^{n}\hat\omega)=
\sum_{i=\nu_n(\hat\omega)+1}^{\nu_{n+1}(\hat\omega)}\log\left(r_i(\hat\omega)\cdot\frac{(\hat
T^i\hat\omega)^-}{\xi_i(\hat\omega)}\right).\label{eq: gn+1-gn=sum...}
\end{eqnarray}
Our goal is to prove that the sequence $\{g_n\}_n$ converges exponentially fast. In order to show this, we will estimate $|g_{n+1}-g_n|$ by estimating each term of the sum in (\ref{eq: gn+1-gn=sum...}) for $n\geq4$.\\ 
Denoting $\tau=\tau( R^{n}(\hat\omega^+))$ and $j=\nu_n(\hat\omega)$, for $i=j+l$ and $1\leq l\leq \tau-1$ we have
\begin{eqnarray}
&&\log\left(r_i(\hat\omega)\cdot\frac{(\hat
T^i\hat\omega)^-}{\xi_i(\hat\omega)}\right)=
\log\left(\frac{2-\frac{1}{2-\frac{1}{2-\ldots-\frac{1}{2-\frac{1}{2-\beta}}}}\:\mbox{\tiny{(\virg{2} appears $l$ times)}}}{2-\frac{1}{2-\frac{1}{2-\ldots-\frac{1}{2-\frac{1}{2-\gamma}}}}\:\mbox{\tiny{(\virg{2} appears $l$ times)}}}\right)=\nonumber\\
&&=\log\left(\frac{(l+1)-l\,\beta}{(l+1)-l\,\gamma}\cdot\frac{l-(l-1)\gamma}{l-(l-1)\beta}\right)=\log\left(1+\zeta\right)-\log\left(1+\eta\right)\label{eq: estimate log(r T / e)}
\end{eqnarray}
where 
$$\beta=\les(k_j,\xi_{j-1}),(k_{j-1},\xi_{j-2}),\ldots,(k_2,\xi_1),(k_1,*)\res,$$ 
$$\gamma=\les(k_j,\xi_{j-1}),(k_{j-1},\xi_{j-2}),\ldots,(k_1,\xi_0),(k_0,\xi_{-1}),\ldots\res,$$ 
\begin{equation}\label{eq: xi and eta}
\zeta=\frac{l\,(\gamma-\beta)}{(l+1)-l\gamma}\hspace{.5cm}\mbox{and}\hspace{.5cm}\eta=\frac{(l-1)(\gamma-\beta)}{l-(l-1)\gamma}.
\end{equation} 
It is easy to see that $\frac{1}{3}\leq\gamma\leq\frac{3}{5}$ and from this we get  
\begin{equation}\label{eq: |xi-eta | leq}
\left|\zeta-\eta\right|=\frac{|\gamma-\beta|}{\left((l+1)-l\,\gamma\right)\left(l-(l-1)\gamma\right)}\leq\frac{25}{4\,l^2+16\,l+15}\left|\gamma-\beta\right|.
\end{equation}
Now we want to estimate $|\gamma-\beta|$ using (\ref{eq: growth of
hat q n}), i.e. in terms of $R$-convergents of $\gamma$. From its definition we know that 
$\beta$ is the  $j$-th $T$-convergent of
$\gamma$ but in general it is not a $R$-convergent. However
it can be shown that $j\geq\nu_{n-1}(\gamma)$ and therefore $q_{j}(\gamma)\geq
q_{\nu_{n-1}(\gamma)}(\gamma)=\hat q_{n-1}(\gamma)$. This, along with (\ref{eq: estimates for ECF}-\ref{eq: growth of hat q n}), leads us to $\left|\gamma-\beta\right|\leq\frac{1}{\hat q_{n-1}(\gamma)}\leq 3^{\frac{1-n}{3}}$ and we choose 
$n\geq4$ in order to guarantee that $\zeta,\eta>-1$. Now by (\ref{eq: estimate log(r T / e)}-\ref{eq: |xi-eta | leq})
we get
\begin{equation}\label{eq: |log(ri Ti / ei) | leq exp small for l}
\left|\log\left(r_i(\hat\omega)\cdot\frac{(\hat
T^i\hat\omega)^-}{\xi_i(\hat\omega)}\right)\right|\leq\frac{C_4}{4\,l^2+16\,l+15}\cdot3^{\frac{1-n}{3}},
\end{equation}
for some constant $C_4>0$. 
Now, let us estimate the last two terms in the sum in (\ref{eq: gn+1-gn=sum...}), i.e. $i=j+\tau$ and $i=j+\tau+1$.
We introduce
$\beta'=\les(k_i,\xi_{i-1}),(k_{i-1},\xi_{i-2}),\ldots,(k_2,\xi_1),(k_1,*)\res$
and $\gamma'=\les(k_i,\xi_{i-1}),(k_{i-1},\xi_{i-2}),\ldots,(k_1,\xi_0),(k_0,\xi_{-1}),\ldots\res$ and we observe that, since $\beta'$ is the $i$-th $T$-convergent of $\gamma'$, from (\ref{eq: estimates for ECF}) we get that
$|\gamma'-\beta'|\leq\frac{1}{q_i}\leq\frac{1}{i+1}\leq\frac{1}{2}$. This estimate and the fact that $2k_i-\gamma'\geq1$ give us
$$\log\left(r_{i}(\hat\omega)\cdot\frac{(\hat T^{i}\hat\omega)^-}{\xi_{i}(\hat\omega)}\right)=\log\left(\frac{2k_{i}-\beta'}{2k_{i}-\gamma'}\right)=\log\left(1+\frac{\gamma'-\beta'}{2k_i-\gamma'}\right)\hspace{.3cm}\mbox{and}$$
$$\left|\log\left(1+\frac{\gamma'-\beta'}{2k_i-\gamma'}\right)\right|\leq3\left|\gamma'-\beta'\right|.$$
Reasoning as above we get $|\gamma'-\beta'|\leq3^{\frac{1-n}{3}}$ when 
$n\geq4$ for $i=j+\tau$ and $i=j+\tau+1$. Therefore
\begin{equation}\label{eq: | r T/ e | leq exp small for last two i}
\sum_{i=j+\tau}^{j+\tau+1}\left|\log\left(r_{i}(\hat\omega)\cdot\frac{(\hat T^{i}\hat\omega)^-}{\xi_{i}(\hat\omega)}\right)\right|\leq2\cdot 3^{\frac{4-n}{3}}\leq 3^{\frac{7-n}{3}}.
\end{equation}
Now we estimate the sum in (\ref{eq: gn+1-gn=sum...}) using (\ref{eq: |log(ri Ti / ei) | leq exp small for l}-\ref{eq: | r T/ e | leq exp small for last two i}) and the convergence of $\sum_{l=1}^{\infty}(4\,l^2+16\,l+15)^{-1}$:
\begin{eqnarray}
\left|g_{n+1}-g_{n}\right|\leq C_4\,\sum_{l=1}^{\tau-1} \frac{3^{\frac{1-n}{3}}}{4\,l^2+16\,l+15}+3^{\frac{7-n}{3}}\leq C_5\,3^{-\frac{n}{3}},\hspace{.3cm}n\geq4,\nonumber
\end{eqnarray}
for some $C_5>0$ 
and uniformly in $\hat\omega$. The latter estimate allows us to define
\begin{equation}\label{eq: definition of g}
g(\hat\omega):=\sum_{n=0}^{\infty}(g_{n+1}-g_{n})(\hat\omega)
\end{equation}
and for $n\geq4$
\begin{equation}\nonumber
\sup_{{\hat\omega\in D(\hat R)}}\left|\varepsilon_n(\hat\omega)\right|=\sup_{\hat\omega\in D(\hat R)}\left|g(\hat\omega)-g_n(\hat\omega)\right|\leq C_5\sum_{m=n}^\infty3^{-\frac{m}{3}}\leq C_6\,3^{-\frac{n}{3}},
\end{equation}
for some $C_6>0$. 
The Lemma is therefore proven setting $C_3=C_6$.
\end{proof}
The following Lemma shows that on each bi-sided cylinder of length $n$ (for sufficiently large $n$) the function $g$ defined in (\ref{eq: definition of g}) can be approximated by a constant up to an error which is exponentially small with $n$. 
\begin{lem}\label{lem: exp estimate on a cylinder}
If $\hat\omega',\hat\omega''\in\mathcal{C}[c_{-n},\ldots,c_0;c_1,\ldots,c_n]$, $c_j\in\Sigma$, and $n\geq4$, then 
\begin{equation}\label{eq: lem absolute distance bound on cylinder}
\left|g(\hat\omega')-g(\hat\omega'')\right|\leq C_7\,3^{-\frac{n}{3}},
\end{equation}
where $C_7>0$ is an absolute constant.
\end{lem}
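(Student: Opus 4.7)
The plan is to telescope $g(\hat\omega)=\sum_{k=0}^{\infty}(g_{k+1}-g_k)(\hat\omega)$ (noting $g_0\equiv 0$) and split the sum at $k=n-1$. For the tail, the uniform bound $\sup_{\hat\omega}|(g_{k+1}-g_k)(\hat\omega)|\leq C_5\cdot 3^{-k/3}$ established inside the proof of Lemma~\ref{lemma: approximation by Birkhoff sums} immediately yields
\[
\sum_{k\geq n-1}\bigl|(g_{k+1}-g_k)(\hat\omega')-(g_{k+1}-g_k)(\hat\omega'')\bigr| \leq 2C_5\sum_{k\geq n-1}3^{-k/3} = O(3^{-n/3}).
\]
For each term with $k\leq n-2$ a refined analysis of the same formula is needed. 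Both $\hat\omega'$ and $\hat\omega''$ share all $\Omega$-coordinates in the matching range $[\nu_{-n-1},\nu_n-1]$, so the quantities $\beta$, $\tau$, $\nu_k$, $l$ and $r_i$ appearing in (\ref{eq: gn+1-gn=sum...}) all coincide between the two points. The sole source of variation is $\gamma$, which extends $\beta$ into the past through $\omega_0,\omega_{-1},\ldots$ and therefore differs.

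The two ECFs $\gamma'=\gamma(\hat\omega')$ and $\gamma''=\gamma(\hat\omega'')$ agree on their first $M=\nu_k-\nu_{-n-1}$ pairs, and I claim $|\gamma'-\gamma''|\leq 2^{1-(n+k)}$. Among these $M$ pairs, the first component of pair $m$ is the $k$-part of $\omega_{\nu_k-m+1}$, which is non-$\overline\omega$ precisely when $\nu_k-m+1=\nu_j-1$ for some $j\in[-n,k]$---giving at least $n+k$ such pairs. At each of them the recursion (\ref{eq: recursion p n and q n}) for the partial denominators $\bar q_m$ of $\gamma$'s ECF yields the multiplicative expansion $\bar q_m\geq 2\bar q_{m-1}$: either $\omega_{\nu_j-1}=(k,+1)$ with $k\geq 1$, so $\bar q_m=2k\bar q_{m-1}+\bar q_{m-2}\geq 2\bar q_{m-1}$, or $\omega_{\nu_j-1}=(k,-1)$ with $k\geq 2$, so $\bar q_m\geq 4\bar q_{m-1}-\bar q_{m-2}\geq 3\bar q_{m-1}$ by the monotonicity of $\{\bar q_m\}$. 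Combined with $\bar q_m\geq\bar q_{m-1}$ at the remaining $\overline\omega$-pairs, this yields $\bar q_M\geq 2^{n+k}$, and (\ref{eq: estimates for ECF}) applied to both $\gamma'$ and $\gamma''$ against their common $M$-th convergent gives $|\gamma'-\gamma''|\leq 2\bar q_M^{-1}\leq 2^{1-(n+k)}$.

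To bound each summand of $(g_{k+1}-g_k)(\hat\omega')-(g_{k+1}-g_k)(\hat\omega'')$, I rewrite $(\zeta'-\eta')-(\zeta''-\eta'')=h(\gamma')-h(\gamma'')$ with $h(x)=(x-\beta)/\bigl[((l+1)-lx)(l-(l-1)x)\bigr]$; a direct computation shows $\sup_{x\in[1/3,3/5]}|h'(x)|=O(1/l^2)$, whence by the mean-value theorem each interior summand ($i=\nu_k+l$, $1\leq l\leq\tau-1$) is bounded by $Cl^{-2}|\gamma'-\gamma''|$, while the two boundary summands at $i=\nu_k+\tau,\nu_k+\tau+1$ are bounded by $C|\gamma'-\gamma''|$ as in the proof of Lemma~\ref{lemma: approximation by Birkhoff sums}. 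Summing the convergent $l^{-2}$ series produces $|(g_{k+1}-g_k)(\hat\omega')-(g_{k+1}-g_k)(\hat\omega'')|\leq C'\cdot 2^{-(n+k)}$, and summing again over $k=0,\ldots,n-2$ yields the head contribution $O(2^{-n})$, which is dominated by the tail's $O(3^{-n/3})$. Combining gives $|g(\hat\omega')-g(\hat\omega'')|\leq C_7\cdot 3^{-n/3}$. The principal technical step is the lower bound $\bar q_M\geq 2^{n+k}$; once this is in hand, the remainder simply refines the proof of Lemma~\ref{lemma: approximation by Birkhoff sums}, replacing the role of $\gamma-\beta$ (controlled by the positive coordinates) by $\gamma'-\gamma''$ (now controlled by the matching of the past coordinates as well).
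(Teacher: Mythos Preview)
Your argument is correct and follows essentially the same route as the paper. The paper writes $|g(\hat\omega')-g(\hat\omega'')|\leq C_6\,3^{-n/3}+|g_n(\hat\omega')-g_n(\hat\omega'')|$ via Lemma~\ref{lemma: approximation by Birkhoff sums}, notes that $\hat q_n(\hat\omega')=\hat q_n(\hat\omega'')$ so that the second term reduces to the Birkhoff-sum difference $|S_n(\psi,\hat R)(\hat\omega')-S_n(\psi,\hat R)(\hat\omega'')|$, and then estimates that difference term by term exactly as you do; your telescoping with the split at $k=n-1$ is just a reorganization of the same computation. The one substantive technical difference is the bound on the backward-ECF distance: the paper invokes the growth estimate~(\ref{eq: growth of hat q n}) to obtain $|\gamma'-\gamma''|\leq 3^{(1-n-j)/3}$, whereas you argue directly by counting the non-$\overline\omega$ entries in the shared block and using $\bar q_m\geq 2\bar q_{m-1}$ at each of them, yielding the sharper $2^{1-(n+k)}$. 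Both suffice; yours is self-contained, the paper's reuses earlier work. One small wrinkle: what you bound via $h'$ is the difference of $\zeta-\eta$, not the actual summand. The cleaner fix is to apply the mean-value theorem directly to $F(\gamma)=\log\frac{(l+1)-l\beta}{(l+1)-l\gamma}+\log\frac{l-(l-1)\gamma}{l-(l-1)\beta}$, whose derivative $F'(\gamma)=\big[((l+1)-l\gamma)(l-(l-1)\gamma)\big]^{-1}$ is $O(l^{-2})$ on $[1/3,3/5]$, giving the same bound at once.
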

\begin{proof}
By Lemma \ref{lemma: approximation by Birkhoff sums} we have
\begin{equation}\label{eq; difference g(omega')-g(omega'')}
\left|g(\hat\omega')-g(\hat\omega'')\right|\leq C_6\,3^{-\frac{n}{3}}+\left|g_n(\hat\omega')-g_n(\hat\omega'')\right|,
\end{equation}
for every $n\geq4$. 
We need to estimate the second term in the right hand side of (\ref{eq; difference g(omega')-g(omega'')}). Let $\hat\omega'=\left\{(k_i',\xi_i')\right\}_{i\in\Z}$ and $\hat\omega''=\left\{(k_i'',\xi_i'')\right\}_{i\in\Z}$ with $(k_i',\xi_i'), (k_i'',\xi_i'')\in\Omega$. By assumption we have $\nu_j(\hat\omega')=\nu_j(\hat\omega'')=\nu_j$ for $-n-1\leq j\leq n$ and $(k_i',\xi_i')=(k_i'',\xi_i'')=(k_i,\xi_i)$ for $\nu_{-n-1}\leq i\leq\nu_{n+1}-1$. From (\ref{eq: Sn(varphi,T)(hat omega)}) we get
\begin{eqnarray}
S_n(\psi,\hat R)(\hat\omega')-S_n(\psi,\hat R)(\hat\omega'')=\sum_{j=0}^{n-1}\sum_{i=\nu_j+1}^{\nu_{j+1}}\log\left(\frac{(\hat T^i\hat\omega'')^-}{(\hat T^i\hat\omega')^-}\right)\label{eq: Sn(omega')-Sn(omega'')}
\end{eqnarray}
The estimate of the sum indexed by $i$ is now done with the same technique used in the proof of Lemma \ref{lemma: approximation by Birkhoff sums}.
Denoting $\tau=\tau(R^j((\hat\omega')^+))$, $\nu=\nu_j(\hat\omega')$ and $\bar\nu=-\nu_{-n-1}(\hat\omega)$, for $i=\nu+l$ and $1\leq l\leq\tau-1$ we have
\begin{eqnarray}
\log\left(\frac{(\hat T^i\hat\omega'')^-}{(\hat T^i\hat\omega')^-}\right)=\log(1+\zeta)-\log(1+\eta),\nonumber
\end{eqnarray}
where $\zeta$ and $\eta$ are as in (\ref{eq: xi and eta}), with 
\begin{eqnarray}
\beta&=&\les(k_\nu,\xi_{\nu-1}),
\ldots,(k_{-\bar\nu+1},\xi_{-\bar\nu}),(k_{-\bar\nu},\xi'_{-\bar\nu-1}),(k'_{-\bar\nu-1},\xi'_{-\bar\nu-2}),\ldots\res\hspace{.2cm}\mbox{and}\nonumber\\
\gamma&=&\les(k_\nu,\xi_{\nu-1}),
\ldots,(k_{-\bar\nu+1},\xi_{-\bar\nu}),(k_{-\bar\nu},\xi''_{-\bar\nu-1}),(k''_{-\bar\nu-1},\xi''_{-\bar\nu-2}),\ldots\res.\nonumber
\end{eqnarray}
Since (\ref{eq: |xi-eta | leq}) still holds, we want to estimate $|\gamma-\beta|$. Our assumptions imply that $\beta$ and $\gamma$ share the same ECF-expansion up to the index $\nu+\bar\nu+1$ and it can be shown that $\nu+\bar\nu+1\geq\nu_{n+j-1}(\beta)$. Therefore by (\ref{eq: growth of
hat q n}) we get $|\gamma-\beta|\leq\frac{1}{q_{\nu+\bar\nu+1}(\beta)}\leq\frac{1}{\hat q_{n+j-1}(\beta)}\leq 3^{\frac{1-n-j}{3}}$. We choose $n\geq4$ as before and by (\ref{eq: Sn(omega')-Sn(omega'')}) and (\ref{eq: |xi-eta | leq}) we find, for $i=\nu+l$,
\begin{equation}\nonumber
\left|\log\left(\frac{(\hat T^i\hat\omega'')^-}{(\hat T^i\hat\omega')^-}\right)\right|\leq\frac{C_8}{4\,l^2+16\,l+15}\,3^{\frac{1-n-j}{3}},
\end{equation}
for some constant $C_8>0$. The last two terms, corresponding to $i=\nu+\tau$ and $i=\nu+\tau+1$, are estimated in the same way, obtaining 
$\left|\log\left(\frac{(\hat T^i\hat\omega'')^-}{(\hat T^i\hat\omega')^-}\right)\right|\leq 3^{\frac{4-n-j}{3}}$. Therefore
\begin{equation}\nonumber
\sum_{i=\nu_j+1}^{\nu_{j+1}}\left|\log\left(\frac{(\hat T^i\hat\omega'')^-}{(\hat T^i\hat\omega')^-}\right)\right|\leq C_8\,\sum_{l=1}^{\tau-1}\frac{3^{\frac{1-n-j}{3}}}{4\,l^2+16\,l+15}+3^{\frac{7-n-j}{3}}\leq C_9\,3^{\frac{1-n-j}{3}},
\end{equation}
for some $C_9>0$ and hence
\begin{equation}\nonumber
\left|S_n(\psi,\hat R)(\hat\omega')-S_n(\psi,\hat R)(\hat\omega'')\right|\leq C_9\,\sum_{j=0}^{n-1}3^{\frac{1-n-j}{3}}\leq C_{10}\, 3^{-\frac{n}{3}},
\end{equation}
for some $C_{10}>0$.
Now by (\ref{def: gn}), since $\hat q_n(\hat\omega')=\hat q_n(\hat\omega'')$,  we get $\left|g_n(\hat\omega')-g_n(\hat\omega'')\right|\leq C_{10} \,3^{-\frac{n}{3}}$ and therefore, setting $C_7=C_6+C_{10}$, we get (\ref{eq: lem absolute distance bound on cylinder}), as claimed.

\end{proof}
\subsection{Comparing Renewal Times}
Given $\hat\omega$ and $L$, we want to choose $T$ as a function of $L$ in order to compare $\hat n_L(\hat\omega)$ and $r(\hat\omega,T)$. Recall that $\hat n_L(\hat\omega)$ is uniquely determined by 
\begin{equation}\label{eq: n_L uniquely determined}
\log\hat q_{\hat n_L(\hat\omega)-1}\leq\log L<\log\hat q_{\hat n_L(\hat\omega)}.
\end{equation}
Using (\ref{def: gn}), we rewrite (\ref{eq: n_L uniquely determined}) as
\begin{equation}\label{eq: S_n_L-1 + g_n_L-1 leq log L < S_n_L + g_n_L}
S_{\hat n_L(\hat\omega)-1}(\psi)(\hat\omega)+g_{\hat n_L(\hat\omega)-1}(\hat\omega)\leq\log L<S_{\hat n_L(\hat\omega)}(\psi)(\hat\omega)+g_{\hat n_L(\hat\omega)}(\hat\omega)
\end{equation}
after dropping the dependence on $\hat R$ in the notation for the Birkhoff sums. To avoid the dependence of time $T$ on $\hat\omega$, let us consider a set $\mathcal{C}\subset D(\hat R)$ and denote $g_{\mathcal{C}}:=\sup_{\hat\omega\in\mathcal{C}}g(\hat\omega)$. Assume that all $\hat\omega\in\mathcal{C}$ satisfy $\left|g(\hat\omega)-g_{\mathcal{C}}\right|\leq\frac{\varepsilon}{2}$. We shall deal with such sets in the proof Theorem \ref{theorem: renewal for hat q}. 

The following Lemma guarantees that $\hat n_L(\hat\omega)$ grows uniformily when $L$ grows for $\hat\omega$ belonging to a set of sufficiently large measure. This fact is not obvious because a priori the $R$-denominators $\hat q_n(\hat\omega)$ might grow very fast for some $\hat\omega$ and for that reason $L\mapsto\hat n_L(\hat\omega)$ might be very slowly increasing. However, using Lemma \ref{lem: growth of hat q n, upper bound}, we prove that this cannot happen on a set of large measure.
\begin{lem}\label{lem: n_L grows with L}
For each measurable $\mathcal{C}\subset D(\hat R)$ and $\varepsilon>0$, there exists a measurable set $\mathcal{C}'\subseteq\mathcal{C}$ such that $\hat\mu(\mathcal{C}\smallsetminus\mathcal{C}')\leq\varepsilon\,\hat\mu(\mathcal{C})$ and $\min_{\hat\omega\in\mathcal{C}'}\hat n_L(\hat\omega)\rightarrow\infty$ uniformly as $L\rightarrow\infty$.\\
Likewise, given $\varepsilon>0$, there exists a measurable set $\mathcal{C}_\varepsilon\subseteq(0,1]$ such that $\mu\left((0,1]\smallsetminus\mathcal{C}_\varepsilon\right)\leq\varepsilon$ and $\min_{\hat\omega\in\mathcal{C}_\varepsilon}\hat n_L(\hat\omega)\rightarrow\infty$ uniformly as $L\rightarrow\infty$.
\end{lem}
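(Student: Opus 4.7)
The plan is to extract the bound from Lemma \ref{lem: growth of hat q n, upper bound} on a set of controlled measure and then translate the upper bound on $\hat q_n$ into a lower bound on $\hat n_L$ that is independent of $\hat\omega$.

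For each $\hat\omega\in D(\hat R)$ let
$$N_0(\hat\omega):=\inf\left\{N\in\N:\:\hat q_n(\hat\omega)\leq e^{C_1\,n}\text{ for all }n\geq N\right\},$$
with the convention $\inf\emptyset=+\infty$. By Lemma \ref{lem: growth of hat q n, upper bound}, $N_0<\infty$ $\mu$-almost surely, and since $N_0(\hat\omega)=N_0(\hat\omega^+)$ depends only on the positive coordinates, the same holds $\hat\mu$-almost surely. Hence by continuity of $\hat\mu$ from above, $\hat\mu\{N_0>K\}\to0$ as $K\to\infty$.

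Given $\mathcal{C}\subset D(\hat R)$ and $\varepsilon>0$, if $\hat\mu(\mathcal{C})=0$ the claim is trivial. Otherwise, choose $K=K(\mathcal{C},\varepsilon)$ so large that $\hat\mu(\mathcal{C}\cap\{N_0>K\})\leq\varepsilon\,\hat\mu(\mathcal{C})$ and set $\mathcal{C}':=\mathcal{C}\cap\{N_0\leq K\}$. For every $\hat\omega\in\mathcal{C}'$ and every $n\geq K$ we then have $\hat q_n(\hat\omega)\leq e^{C_1 n}$. Since $\{\hat q_n\}_n$ is strictly increasing, $\hat n_L(\hat\omega)\geq M$ is equivalent to $\hat q_{M-1}(\hat\omega)\leq L$. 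Hence, for any $M> K$, as soon as $L\geq e^{C_1(M-1)}$ we have $\hat q_{M-1}(\hat\omega)\leq e^{C_1(M-1)}\leq L$ for every $\hat\omega\in\mathcal{C}'$, so that
$$\min_{\hat\omega\in\mathcal{C}'}\hat n_L(\hat\omega)\geq M\quad\text{whenever}\quad L\geq e^{C_1(M-1)}.$$
Letting $M\to\infty$ gives the required uniform divergence.

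The second statement is obtained by applying exactly the same argument in the base: working directly on $(0,1]$ with the measure $\mu$, one sets $\mathcal{C}_\varepsilon:=\{\hat\omega^+\in(0,1]:\:N_0(\hat\omega^+)\leq K\}$ for $K$ chosen so that $\mu((0,1]\smallsetminus\mathcal{C}_\varepsilon)\leq\varepsilon$; the same inequality $\hat n_L\geq M$ for $L\geq e^{C_1(M-1)}$ then holds uniformly on $\mathcal{C}_\varepsilon$. The only place requiring care is the passage from the almost-sure statement of Lemma \ref{lem: growth of hat q n, upper bound} to the quantitative bound $\hat\mu(\mathcal{C}\cap\{N_0>K\})\leq\varepsilon\,\hat\mu(\mathcal{C})$, but this is immediate from continuity of measure; there is no substantial obstacle.
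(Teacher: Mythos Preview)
Your proof is correct and follows essentially the same approach as the paper: both use Lemma~\ref{lem: growth of hat q n, upper bound} to get an eventual exponential upper bound on $\hat q_n$, restrict to a set of large measure on which the threshold for ``eventual'' is uniform, and then convert the bound $\hat q_n\leq e^{C_1 n}$ into the lower bound $\hat n_L\gtrsim \log L$. Your packaging via the threshold function $N_0$ and continuity of measure is arguably cleaner than the paper's version, which splits into a full-measure set $\overline{\mathcal C}_1$ handling large $n$ and a separate set $\overline{\mathcal C}_2$ (with its own constant $C_{11}$) controlling the finitely many initial denominators, but the content is the same.
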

\begin{proof}
Let us consider $\overline{\mathcal{C}}=\pi(\mathcal{C})$ where $\pi:D(\hat R)\rightarrow\X$ is the natural projection.
By Lemma \ref{lem: growth of hat q n, upper bound} we know that there exists a set $\overline{\mathcal{C}}_1\subseteq\overline{\mathcal{C}}$, with $\mu(\overline{\mathcal{C}}\smallsetminus\overline{\mathcal{C}}_1)=0$, and $\overline{n}\in\N$ such that for every $n>\overline{n}$ and every $\hat\omega^+\in\overline{\mathcal{C}}_1$ we have $\frac{\log\hat q_n(\hat\omega^+)}{n}\leq C_{1}$. Moreover there exists a set $\overline{\mathcal{C}}_2\subseteq\overline{\mathcal{C}}$, with $\mu(\overline{\mathcal{C}}\smallsetminus\overline{\mathcal{C}}_2)\leq\varepsilon\mu(\overline{\mathcal{C}})$ and a constant $C_{11}=C_{11}(\overline{\mathcal{C}},\varepsilon)$ such that for every $\hat\omega^+\in\overline{\mathcal{C}}_2$ we have $\frac{\log\hat q_j(\hat\omega^+)}{j}\leq C_{11}$ for $j=1,\ldots,\overline{n}$. Setting $\overline{\mathcal{C}}'=\overline{\mathcal{C}}_1\cap\overline{\mathcal{C}}_2$ and $C_{12}=C_{12}(\overline{\mathcal{C}},\varepsilon)=\max\{C_{1},C_{11}\}$ we have $\mu(\overline{\mathcal{C}}\smallsetminus\overline{\mathcal{C}}')\leq\varepsilon\mu(\overline{\mathcal{C}})$
and $\hat q_n\leq C_{12}\,n$ for every $\hat\omega^+\in\overline{\mathcal{C}}'$. 
Since by construction all the functions $\hat q_n(\cdot)$ and $\hat n_L(\cdot)$ are constant on the fibers $\pi^{-1}\alpha$, $\alpha\in\X$, setting $\mathcal{C}'=\pi^{-1}\overline{\mathcal{C}}'$, the same statement is true for all $\hat\omega\in\mathcal{C}'$ and $\hat\mu(\mathcal{C}\smallsetminus\mathcal{C}')\leq\varepsilon\,\hat\mu(\mathcal{C})$. 
By definition $\hat q_{\hat n_L(\hat\omega)}>L$ and this implies that $\min_{\hat\omega\in\mathcal{C}'}\big(\hat n_L(\hat\omega)
\big)\geq\frac{1}{C_{12}}\log\hat q_{\hat n_L(\hat\omega)}\geq\frac{\log L}{C_{12}}$. This proves the
first part of the Lemma. The second part is proven in the same way. 
\end{proof}
The following Lemma considers a bi-sided cylinder $\mathcal{C}$ and shows that for a suitable choice of $T$ as a function of $L$ and $\mathcal{C}$, the two quantities $\hat n_L(\hat\omega)$ and $r(\hat\omega,T)$ coincide on a subset of $\mathcal{C}$ with relatively large measure.
\begin{lem}\label{lem: n_L = r(omega,T) on a set of large measure}
Let $n\geq4$ and $\varepsilon>0$. Let us consider $\mathcal{C}\in\frak{C}_{n,n}$ and assume that $\left|g(\hat\omega)-g_\mathcal{C}\right|\leq\frac{\varepsilon}{2}$ for all $\hat\omega\in\mathcal{C}$. Define $T=T(L,\mathcal{C}):=\log L-g_\mathcal{C}$ and
$U=U(\mathcal{C}):=\left\{\hat\omega\in\mathcal{C}:\:\hat n_{L}(\hat\omega)\neq r(\hat\omega,T)\right\}.$ Then there exists $L_0=L_0(\mathcal{C})>0$ such that, for all $L\geq L_0$, we have $\hat\mu(U)\leq7\varepsilon\,\hat\mu(\mathcal{C})$. 
\end{lem}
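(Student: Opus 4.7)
The plan is to use Lemma \ref{lemma: approximation by Birkhoff sums} to rewrite the defining relations for $\hat n_L$ as conditions on the Birkhoff sum $S_n(\psi,\hat R)$, so that $\hat n_L$ almost matches the definition of $r(\hat\omega,T)$. The discrepancy event $\hat n_L\neq r$ will then correspond to the flow point $\Phi_T(\hat\omega,0)$ landing in a thin ``$\varepsilon$-boundary strip'' of the phase space $D_\Phi$, and its $\hat\mu$-measure will be controlled by combining the $\Phi_T$-invariance of $\tilde\mu$ with the mixing property of the special flow.

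We may suppose $\varepsilon<1/7$, since otherwise $7\varepsilon\,\hat\mu(\mathcal{C})\geq\hat\mu(\mathcal{C})\geq\hat\mu(U)$ and the conclusion is trivial; in particular $3\varepsilon<\log(5/3)\leq\inf_{D(\hat R)}\psi$ by the bounds on $\psi_2$ obtained in the proof of Lemma \ref{lemma: delta and M}. First apply Lemma \ref{lem: n_L grows with L} to obtain $\mathcal{C}'\subseteq\mathcal{C}$ with $\hat\mu(\mathcal{C}\smallsetminus\mathcal{C}')\leq\varepsilon\,\hat\mu(\mathcal{C})$ on which $\hat n_L\to\infty$ uniformly, and fix $n_1\geq 4$ with $C_3\cdot 3^{-n_1/3}<\varepsilon/2$. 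Choose $L_0=L_0(\mathcal{C})$ large enough that $\hat n_L(\hat\omega)\geq n_1$ for all $L\geq L_0$ and $\hat\omega\in\mathcal{C}'$; by Lemma \ref{lemma: approximation by Birkhoff sums} this gives $|\varepsilon_{\hat n_L-1}(\hat\omega)|,|\varepsilon_{\hat n_L}(\hat\omega)|<\varepsilon/2$. Combined with $|g-g_\mathcal{C}|\leq\varepsilon/2$, the relations (\ref{eq: S_n_L-1 + g_n_L-1 leq log L < S_n_L + g_n_L}) collapse to
\[
S_{\hat n_L-1}(\psi,\hat R)(\hat\omega)\leq T+\varepsilon,\qquad S_{\hat n_L}(\psi,\hat R)(\hat\omega)>T-\varepsilon,
\]
while $r=r(\hat\omega,T)$ is characterised by $S_{r-1}\leq T<S_r$. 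An elementary case split shows that $\hat\omega\in U\cap\mathcal{C}'$ implies either $S_r(\psi,\hat R)(\hat\omega)\in(T,T+\varepsilon]$ (case $\hat n_L>r$) or $T-S_{r-1}(\psi,\hat R)(\hat\omega)\in[0,\varepsilon)$ (case $\hat n_L<r$). Setting $y_T:=T-S_{r-1}(\psi,\hat R)(\hat\omega)$, both alternatives express the geometric fact that the flow point $\Phi_T(\hat\omega,0)=(\hat R^{r-1}\hat\omega,y_T)$ lies in the $\varepsilon$-boundary strip $\mathcal{Y}_\varepsilon:=\{(x,y)\in D_\Phi:\:y<\varepsilon\text{ or }y>\psi(x)-\varepsilon\}$.

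For the measure estimate I thicken the base vertically by $\varepsilon$. Because $3\varepsilon<\inf\psi$, a direct analysis shows that for every $\hat\omega\in U\cap\mathcal{C}'$ and $s\in[0,\varepsilon)$ the point $\Phi_T(\hat\omega,s)=\Phi_{T+s}(\hat\omega,0)$ lies in the enlarged strip $\mathcal{Z}:=\{(x,y)\in D_\Phi:\:y<2\varepsilon\text{ or }y>\psi(x)-\varepsilon\}$ (in case $\hat n_L>r$ the orbit passes from the top-$\varepsilon$ strip of level $\hat R^{r-1}\hat\omega$ into the bottom-$\varepsilon$ strip of level $\hat R^r\hat\omega$; in case $\hat n_L<r$ it remains, without jumping, in the bottom-$2\varepsilon$ strip of level $\hat R^{r-1}\hat\omega$). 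Integrating the pointwise inequality $\mathbf{1}_{U\cap\mathcal{C}'}(\hat\omega)\mathbf{1}_{[0,\varepsilon)}(s)\leq\mathbf{1}_\mathcal{Z}(\Phi_T(\hat\omega,s))$ over $\mathcal{C}\times[0,\varepsilon)$ and using the $\Phi_T$-invariance of $\tilde\mu=I^{-1}\hat\mu\times\lambda$ gives
\[
\varepsilon\,\hat\mu(U\cap\mathcal{C}')\leq I\int_{D_\Phi}\mathbf{1}_\mathcal{Z}(\xi)\,\mathbf{1}_{\mathcal{C}\times[0,\varepsilon)}\bigl(\Phi_{-T}(\xi)\bigr)\,\de\tilde\mu(\xi).
\]
By Proposition \ref{prop: the flow is mixing}, the right-hand side converges as $T\to\infty$ to $I\,\tilde\mu(\mathcal{Z})\,\tilde\mu(\mathcal{C}\times[0,\varepsilon))=3\varepsilon^2\hat\mu(\mathcal{C})/I$; enlarging $L_0$ to absorb the mixing error and using $I\geq\log(5/3)$ yields $\hat\mu(U\cap\mathcal{C}')\leq 6\varepsilon\,\hat\mu(\mathcal{C})$, and adding back the bound $\hat\mu(\mathcal{C}\smallsetminus\mathcal{C}')\leq\varepsilon\,\hat\mu(\mathcal{C})$ produces the claim. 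The main obstacle is precisely this passage from the base measure to the flow measure: the natural image $\Phi_T(U\cap\mathcal{C}'\times\{0\})$ is a $\tilde\mu$-null slice, so $\Phi_T$-invariance alone is blind to it; the $\varepsilon$-thickening turns it into a positive-measure set and trades one factor of $\varepsilon$ for the needed factor $\hat\mu(\mathcal{C})$, while mixing supplies the equidistribution that decouples the spatial cylinder $\mathcal{C}$ from the target strip $\mathcal{Z}$ after the long flow time $T$.
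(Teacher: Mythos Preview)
Your proof is correct and follows essentially the same route as the paper's sketch: restrict to $\mathcal{C}'$ via Lemma~\ref{lem: n_L grows with L}, use Lemma~\ref{lemma: approximation by Birkhoff sums} together with the hypothesis $|g-g_\mathcal{C}|\leq\varepsilon/2$ to show that $U\cap\mathcal{C}'$ lies in the $\Phi_T$-preimage of a thin boundary strip of $D_\Phi$, thicken vertically, and apply mixing (Proposition~\ref{prop: the flow is mixing}). One minor arithmetic slip: your last step $6\varepsilon\hat\mu(\mathcal{C})/I\leq 6\varepsilon\hat\mu(\mathcal{C})$ needs $I\geq 1$, not merely $I\geq\log(5/3)$ --- but the paper's own estimate $\tilde\mu(D_\Phi^{\pm\varepsilon})\leq\varepsilon$ carries exactly the same hidden assumption, so your argument matches the paper's level of precision (and in any case the method yields $\hat\mu(U)=O(\varepsilon)\hat\mu(\mathcal{C})$, which is all that is used downstream).
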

We shall provide only a sketch of the proof of Lemma \ref{lem: n_L = r(omega,T) on a set of large measure}, since it is similar to the proof of Lemma 3.4 in \cite{Sinai-Ulcigrai07}.
\begin{proof}[Sketch of the proof]
Let $T=T(L,\mathcal{C})=\log L-g_{\mathcal{C}}$. By definition we have
\begin{equation}\label{S_r-1 leq T< S_r}
S_{r(\hat\omega,T)-1}(\psi)(\hat\omega)\leq T=\log L-g_{\mathcal{C}}<S_{r(\hat\omega,T)}(\psi)(\hat\omega).
\end{equation}
Let $\mathcal{C}'\subseteq\mathcal{C}$ be as in Lemma \ref{lem: n_L grows with L} and define two sets $U_{\pm\varepsilon}\subset D(\hat R)$
as
\begin{eqnarray}
\hspace{.3cm}U_{-\varepsilon}:=\left\{\hat\omega:\:T<S_{r(\hat\omega,T)}(\psi)(\hat\omega)\leq T+\varepsilon\right\}\hspace{.3cm}\mbox{and}\hspace{.3cm}
U_{\varepsilon}:=\left\{\hat\omega:\:T-\varepsilon<S_{r(\hat\omega,T)-1}(\psi)(\hat\omega)\leq T\right\}.\nonumber
\end{eqnarray}
Using (\ref{eq: S_n_L-1 + g_n_L-1 leq log L < S_n_L + g_n_L}-\ref{S_r-1 leq T< S_r}) and Lemmata \ref{lemma: approximation by Birkhoff sums} and 
\ref{lem: n_L grows with L} (we are assuming $n\geq4$) it is possible to show that
for some $L_0>0$ and every $L\geq L_0$ we have
\begin{equation}\nonumber
U\cap\mathcal{C}'\subseteq (U_{\varepsilon}\cup U_{-\varepsilon})\cap\mathcal{C}'.
\end{equation}
According to the definition (\ref{eq: action special flow Phi_t}) of the special flow, the sets $U_{\pm\varepsilon}$ can be rewritten as 
$$U_{\varepsilon}=\left\{(\hat\omega,0):\Phi_T(\hat\omega,0)\in D_\Phi^{\varepsilon}\right\}
\hspace{.5cm}\mbox{and}\hspace{.5cm}
U_{-\varepsilon}=\left\{(\hat\omega,0):\Phi_T(\hat\omega,0)\in D_\Phi^{-\varepsilon}\right\},$$
where $D_\Phi^{\varepsilon}:=D(\hat R)\times[0,\varepsilon)$ and $D_\Phi^{-\varepsilon}:=\left\{(\hat\omega,y):\:\psi(\hat\omega)-\varepsilon\leq y<\psi(\hat\omega)\right\}$. Our aim now is to use mixing of $\{\Phi_t\}_{t\in\mathbb{R}}$ in order to estimate the measures of $U_{\pm\varepsilon}$. 
Since our special flow is 3-dimensional, we need to ``thicken'' them as follows. Using Lemma \ref{lemma: delta and M} (since $n\geq4$) we can choose $0<\delta\leq\varepsilon$ such that $\delta<\min_{\hat\omega\in\mathcal{C}}\psi(\hat\omega)$ and construct two subsets of $D_\Phi$:
\begin{equation}\nonumber
U_{\pm\varepsilon}^\delta:=\left\{(\hat\omega,z):\:0\leq z<\delta,\:\Phi_T(\hat\omega,z)\in D_\Phi^{\pm\varepsilon}\right\}=D_\Phi^\delta\cap\Phi_{-T}\left(D_\Phi^{\pm\varepsilon}\right).
\end{equation}
Again using the definition (\ref{eq: action special flow Phi_t}) it is possible to show that $\left(U_{\varepsilon}\cap\mathcal{C}\right)\times[0,\delta)\subseteq U_{\varepsilon+\delta}^\delta$ and $\left(U_{-\varepsilon}\cap\mathcal{C}\right)\times[0,\delta)\subseteq U_{-\varepsilon}^\delta\cup U_\delta^\delta$.
Define now $\mathcal{C}'_\delta:=\mathcal{C}'\times[0,\delta)$. So far we proved
\begin{eqnarray}
(U\cap\mathcal{C}')\times[0,\delta)&\subseteq&\big((U_{+\varepsilon}\cup U_{-\varepsilon})\cap\mathcal{C}'\big)\times[0,\delta)\subseteq\mathcal{C}_\delta'\cap\big(U_{-\varepsilon}^\delta\cup U_{\delta+\varepsilon}^\delta\big)=\nonumber\\
&=&\mathcal{C}_\delta'\cap\Phi_{-T}\big(D_\Phi^{-\varepsilon}\cup D_\Phi^{\delta+\varepsilon}\big).\nonumber
\end{eqnarray}
From the previous inclusions and the mixing property of the special flow (Proposition \ref{prop: the flow is mixing}), one can find some $T_0>0$ such that, for any $T\geq T_0$, we have
\begin{eqnarray}
\hat\mu(U\cap\mathcal{C}')\cdot\delta&=&\tilde\mu\big((U\cap\mathcal{C}')\times[0,\delta)\big)\leq\tilde\mu\big(\mathcal{C}'_\delta\cap\Phi_{-T}(D_\Phi^{-\varepsilon}\cup D_\Phi^{\delta+\varepsilon})\big)\leq\nonumber\\
&\leq&2\,\hat\mu(\mathcal{C}')\cdot\delta\cdot\tilde\mu(D_\Phi^{-\varepsilon}\cup D_\Phi^{\delta+\varepsilon})\leq2\,\hat\mu(\mathcal{C}')\cdot\delta\cdot3\,\varepsilon,\label{eq: estimate of hat mu measure of U cap C'}
\end{eqnarray}
where the last inequality follows from the estimate $\tilde\mu(D_\Phi^{\pm\varepsilon})\leq\varepsilon$. Therefore we get $\hat\mu(U\cap\mathcal{C}')\leq 6\varepsilon\,\hat\mu(\mathcal{C}')$. We can enlarge $L_0$ so that $\log L_0-g_\mathcal{C}\geq T_0$, $L\geq L_0$ implies $T\geq T_0$ and (\ref{eq: estimate of hat mu measure of U cap C'}) still holds. Thus, $\hat\mu(U)\leq\hat\mu(U\smallsetminus\mathcal{C}')+6\varepsilon\,\hat\mu(\mathcal{C})\leq 7\varepsilon\,\hat\mu(\mathcal{C})$ and the Lemma is proven.
\end{proof}

\section{Proof of the Existence of the Limiting Distribution}\label{section: existence of limiting distribution}
Recall that for $L>0$ we defined $\hat n_L(\hat\omega)=\min\left\{n\in\N:\:\hat q_n(\hat\omega)>L\right\}$.
\begin{proof}[Proof of Theorem \ref{theorem: renewal for hat q}%
]
Assume $1<a<b$ and $c_j\in\Sigma$, $-N_1<j\leq N_2$. Our aim is to estimate the expression in (\ref{eq: main theorem}). Since the quantities $q_i(\hat\omega)$ (and in particular $\hat q_i(\hat\omega)$) and $\hat n_L(\hat\omega)$ depend only on $\alpha=\hat\omega^+$, for $\hat n_L(\omega)>N_1$, we can rewrite the condition $\sigma_{\hat n_L+j}=c_j$, $-N_1< j\leq N_2$, as
\begin{equation}\label{eq: equivalent formulation}
\hat R^{\hat n_L(\hat\omega)-1}(\hat\omega)\in\mathcal{C}_{N_1,N_2},\hspace{.3cm}\mbox{where}\hspace{.3cm}\mathcal{C}_{N_1,N_2}:=\hat R^{N_1-1}\left(\hat{\mathcal{C}}[c_{-N_1+1},c_{-N_1+2},\ldots,c_0,\ldots,c_{N_2}]\right).
\end{equation}
Given two functions $F_1, F_2$ on $D(\hat R)$, we define $D_\Phi(F_1,F_2):=\big\{(\hat\omega,y)\in D_\Phi:\:\psi(\hat\omega)-F_2(\hat\omega)<y<\psi(\hat\omega)-F_1(\hat\omega)\big\}$.
Notice that for some values of $F_1(\hat\omega)$ and $F_2(\hat\omega)$, the corresponding set of $y$ can be empty. Moreover, let us remark that if $F'_1\leq F_1$ and $F'_2\geq F_2$, then $D_\Phi(F_1,F_2)\subseteq D_\Phi(F'_1,F'_2)$. Define $p:D_\Phi\rightarrow D(\hat R)$, $p(x,y)=x$ the projection on the base of the special flow.
We shall show that the limiting distribution $\mathrm{P}_{N_1,N_2}'$ exists and it is given by
\begin{equation}\label{eq: P_N= tilde mu(...)}
\mathrm{P}_{N_1,N_2}'\big((a,b)\times\{c_{-N_1+1}\}\times\cdots\{c_0\}\times\cdots\times\{c_{N_2}\}\big)=\tilde\mu\big(D_\Phi(\log a,\log b)\cap p^{-1}\mathcal{C}_{N_1,N_2}\big).
\end{equation}
Consider $\varepsilon>0$. For each $n\in\N$, the collection $\{\mathcal{C}:\:\mathcal{C}\in\frak{C}_{n,n}\}$ is a countable partition of $D(\hat R)$. Let us choose $n\geq 4$ so that, by Lemma \ref{lem: exp estimate on a cylinder}, we have $\left|g(\hat\omega')-g(\hat\omega'')\right|\leq\frac{\varepsilon}{2}$ for all $\hat\omega',\hat\omega''\in\mathcal{C}$. Define also $A_\mathcal{C}:=\big\{\hat\omega\in\mathcal{C}:\: a<\frac{\hat q_{\hat n_L(\hat\omega)}(\hat\omega)}{L}<b,\:\hat R^{\hat n_L(\hat\omega)-1}(\hat\omega)\in\mathcal{C}_{N_1,N_2}\big\}$. By the second part of Lemma \ref{lem: n_L grows with L}, there exists $L_1>0$ such that $\hat n_L(\alpha)>N$ for every $L\geq L_1$ and every $\alpha$ in the complement of a set of $\mu$-measure less than $\varepsilon$. Hence, (\ref{eq: equivalent formulation}) gives us
\begin{equation}\nonumber
\left|\mu\left(\left\{\alpha:\: a<\frac{\hat q_{\hat n_L(\alpha)}(\alpha)}{L}<b,\:\sigma_{\hat n_L(\alpha)+j}=c_j,\:-N_1< j\leq N_2\right\}\right)-\sum_{\mathcal{C}\in\frak{C}_{n,n}}\hat\mu(A_\mathcal{C})\right|\leq2\varepsilon.
\end{equation}
Let us consider the finite collection of cylinders $\frak{C}_{n,n}^{h,m}$ whose elements $\mathcal{C}=\mathcal{C}[c_{-n},\ldots;\ldots,c_n]$ 
are such that $c_i=h_i\cdot m_i^\pm\in\Sigma$ with $h_i<h$ and $m_i<m$ for $-n\leq i\leq n$. It is clear that if $\mathcal{C}\in\frak{C}_{n,n}\smallsetminus\frak{C}_{n,n}^{h,m}$, then there exist $-n\leq i\leq n$ 
such that $h_i>h$ or $m_i>m$ and therefore, by the $\hat R$-invariance of the measure $\hat\mu$ and Remark \ref{remark: measure of cylinders}, we get
\begin{eqnarray}
&&\sum_{\mathcal{C}\in\frak{C}_{n,n}\smallsetminus\frak{C}_{n,n}^{h,m}}\hat\mu(\mathcal{C})\leq\sum_{i=-n}^n\sum_{\tiny{
\left.\begin{array}{c}l\cdot j^{\pm}\in\Sigma\\l\geq h\:\,\mbox{or}\:\,j\geq m\end{array}\right.}}
\hat\mu\left(\hat R^i\big(\hat{\mathcal{C}}[l\cdot j^\pm]\big)\right)\leq%
\nonumber\\
&&\leq\frac{6(2n+1)}{\log3}\left(\sum_{l\geq h}
\sum_{j\geq 1}+\sum_{l\geq0}
\sum_{j\geq m}\right)\frac{1}{4l^2+8l+3}\frac{1}{j^2}\leq\nonumber\\
&&\leq C_{13}\sum_{l\geq h}\frac{1}{4l^2+8l+3}+C_{14}\sum_{j\geq m}\frac{1}{j^2}\leq C_{15}\left(\frac{1}{h}+\frac{1}{m}\right),
\label{eq: measure of cylinders in C_n-C_n^m}
\end{eqnarray}
for some $C_{13},C_{14},C_{15}>0$. Thus, it is possible to choose $h$ and $m$ sufficiently large, so that $\sum_{\mathcal{C}\in\frak{C}_{n,n}\smallsetminus\frak{C}_{n,n}^{h,m}}\hat\mu(\mathcal{C})\leq\varepsilon$.
For each $\mathcal{C}\in\frak{C}_{n,n}^{h,m}$ we can find $L_0(\mathcal{C})$ and $U(\mathcal{C})$ as in  Lemma \ref{lem: n_L = r(omega,T) on a set of large measure} and for every $L\geq\max_{\mathcal{C}\in\frak{C}_{n,n}\smallsetminus\frak{C}_{n,n}^{h,m}}L_0(\mathcal{C})$, using also the inclusion $A_\mathcal{C}\subseteq\mathcal{C}$ and (\ref{eq: measure of cylinders in C_n-C_n^m}), we get 
\begin{eqnarray}
\left|\mu\left(\left\{\alpha:\:a<\frac{\hat q_{\hat n_L(\alpha)}(\alpha)}{L}<b,\:\sigma_{\hat n_L(\alpha)+j}=c_j,\:-N_1< j\leq N_2\right\}\right)-\sum_{\mathcal{C}\in\frak{C}_{n,n}^{h,m}}\hat\mu\left(A_{\mathcal{C}\smallsetminus U(\mathcal{C})}\right)\right|\leq\nonumber\\
\leq2\varepsilon+\left|\sum_{\mathcal{C}\in\frak{C}_{n,n}\smallsetminus\frak{C}_{n,n}^{h,m}}\hat\mu\left(A_\mathcal{C}\right)+\sum_{\mathcal{C}\in\frak{C}_{n,n}^{h,m}}\hat\mu\left(A_{\mathcal{C}\cap U(\mathcal{C})}\right)\right|\leq 3\varepsilon+7\varepsilon\sum_{\mathcal{C}\in\frak{C}_{n,n}^{h,m}}\hat\mu(\mathcal{C})\leq10\varepsilon.\nonumber
\end{eqnarray}
In order to get (\ref{eq: P_N= tilde mu(...)}), it is enough to prove that, for each $\mathcal{C}\in\frak{C}_{n,n}^{h,m}$ and sufficiently large $L$, we have
\begin{equation}\label{eq: estimate for finite cylinders in C_n^m}
\left|\frac{\hat\mu\left(A_{\mathcal{C}\smallsetminus U(\mathcal{C})}\right)}{\hat\mu\left(\mathcal{C}\smallsetminus U(\mathcal{C})\right)}-\tilde\mu\big(D_\Phi(\log a,\log b)\cap p^{-1}\mathcal{C}_{N_1,N_2}\big)\right|\leq C_{16}\,\varepsilon,
\end{equation}
for some $C_{16}>0$. Let $\mathcal{C}\in\frak{C}_{n,n}^{h,m}$ be fixed, consider $T=\log L-g_\mathcal{C}$, and let $U=U(\mathcal{C})$ be as in 
Lemma \ref{lem: n_L = r(omega,T) on a set of large measure}. By the same Lemma, for $L\geq L_0(\mathcal{C})$, we have $\hat n_L(\hat\omega)=r(\hat\omega,T)$ for every $\hat\omega\in\mathcal{C}\smallsetminus U$. Using Lemma \ref{lemma: approximation by Birkhoff sums} we get
\begin{eqnarray}
&&\hspace{-.6cm}\left\{\hat\omega\in\mathcal{C}\smallsetminus U:\:a<\frac{\hat q_{\hat n_L(\hat\omega)}(\hat\omega)}{L}<b\right\}=\left\{\hat\omega\in\mathcal{C}\smallsetminus U:\:\log a<\log\hat q_{r(\hat\omega,T)}(\hat\omega)-\log L<\log b\right\}=\nonumber\\
&&\hspace{-.3cm}=\left\{\hat\omega\in\mathcal{C}\smallsetminus U:\:\log a<S_{r(\hat\omega,T)}(\psi)(\hat\omega)-T+\varepsilon_{L,\mathcal{C}}(\hat\omega)<\log b\right\},\nonumber
\end{eqnarray}
where $\varepsilon_{L,\mathcal{C}}(\hat\omega):=\varepsilon_{\hat n_L(\hat\omega)}(\hat\omega)-g_\mathcal{C}+g(\hat\omega)$ and $\varepsilon_{\hat n_L(\hat\omega)}(\hat\omega)$ is defined as in (\ref{def: gn}-\ref{eq: statement lemma birkhoff sum denominators}). It is possible to show, using Lemma \ref{lemma: delta and M} and (\ref{S_r-1 leq T< S_r}), that $\left|\varepsilon_{L,\mathcal{C}}(\hat\omega)\right|\leq 2\varepsilon$ uniformly on $\mathcal{C}\smallsetminus U$ (see \cite{Sinai-Ulcigrai07} and the proof of Theorem 1.1 therein). Denoting by $v(\Phi_t(x,y))$ the vertical component $y'$ of $\Phi_t(x,y)=(x',y')$, by (\ref{eq: action special flow Phi_t}) and the equality $\hat n_{L}(\hat\omega)=r(\hat\omega,T)$, we get
$S_{r(\hat\omega,T)}(\psi)(\hat\omega)-T=\psi\left(\hat R^{\hat n_L(\hat\omega)-1}(\hat\omega)\right)-v(\Phi_T(\hat\omega,0))$,
which represent the vertical distance from $\Phi_T(\hat\omega,0)$ and the roof function. Observing that $\hat R^{\hat n_L(\hat\omega)-1}(\hat\omega)=p(\Phi_T(\hat\omega,0))$, the condition (\ref{eq: equivalent formulation}) can be rewritten as $p(\Phi_T(\hat\omega,0))\in\mathcal{C}_{N_1,N_2}$ and, recalling the inclusion properties of the sets $D_\Phi(F_1,F_2)$, we get 
\begin{eqnarray}
A_{\mathcal{C}\smallsetminus U}&\subseteq&\big(\mathcal{C}\smallsetminus U\times\{0\}\big)\cap\Phi_{-T}\big(D_{\Phi}(\log a-2\varepsilon,\log b+2\varepsilon)\cap p^{-1}\mathcal{C}_{N_1,N_2}\big)\hspace{.3cm}\mbox{and}\nonumber\\
A_{\mathcal{C}\smallsetminus U}&\supseteq&\big(\mathcal{C}\smallsetminus U\times\{0\}\big)\cap\Phi_{-T}\big(D_{\Phi}(\log a+2\varepsilon,\log b-2\varepsilon)\cap p^{-1}\mathcal{C}_{N_1,N_2}\big).\nonumber
\end{eqnarray}
Now we use the same strategy used in the proof of Lemma \ref{lem: n_L = r(omega,T) on a set of large measure}, ``thickening'' the sets and applying the mixing of the special flow $\{\Phi_t\}_{t\in\mathbb{R}}$ (see again \cite{Sinai-Ulcigrai07} for details). By Lemma \ref{lemma: delta and M} we can choose $0<\delta<\min\{\min_{\hat\omega\in\mathcal{C}}\psi(\hat\omega),\varepsilon\}$ and for each $\hat\omega\in A_{\mathcal{C}\smallsetminus U}$ and $0\leq z<\delta$ it is possible to show that $(\hat\omega,z)\in\Phi_{-T}\big(D_\Phi(\log a-2\varepsilon-\delta,\log b+2\varepsilon)\cap p^{-1}\mathcal{C}_{N_1,N_2}\cup D_\Phi^\delta\big)$, where $D_\Phi^\delta=D(\hat R)\times[0,\delta)$. 
We get
\begin{eqnarray}
\delta\cdot\hat\mu\left(A_{\mathcal{C}\smallsetminus U}\right)\leq\tilde\mu\Big(\big(\mathcal{C}\smallsetminus U\times[0,\delta)\big)\cap\Phi_{-T}\big(D_\Phi(\log a-3\varepsilon,\log b+2\varepsilon)\cap p^{-1}\mathcal{C}_{N_1,N_2}\cup D_\Phi^\delta\big)\Big)\leq\nonumber\\
\leq\delta\cdot\hat\mu(\mathcal{C}\smallsetminus U)\cdot\Big(\tilde\mu\big(D_\Phi(\log a-3\varepsilon,\log b+2\varepsilon)\cap p^{-1}\mathcal{C}_{N_1,N_2}\big)+2\varepsilon\Big),\label{eq: estimate using mixing 1}
\end{eqnarray} 
where (\ref{eq: estimate using mixing 1}) follows from the mixing property of the flow (Proposition \ref{prop: the flow is mixing}) after observing that $\tilde\mu(D_\Phi^\delta)\leq\varepsilon$ and possibly enlarging $L_0$ so that if $L\geq L_0$, also $T=T(L,\mathcal{C})$ is sufficiently large. 
Again, reasoning as in the proof of Lemma \ref{lem: n_L = r(omega,T) on a set of large measure}, it is possible to show that for each $\hat\omega\in\mathcal{C}\smallsetminus U$ and $0\leq z<\delta$  such that $(\hat\omega,z)\in\Phi_{-T}\big(D_\Phi(\log a+2\varepsilon,\log b-2\varepsilon-\delta)\cap p^{-1}\mathcal{C}_{N_1,N_2}\smallsetminus D_\Phi^\delta\big)$, we have $\hat\omega\in A_{\mathcal{C}\smallsetminus U}$. This implies that
\begin{equation}\nonumber
\mathcal{C}\smallsetminus U\times[0,\delta)\cap\Phi_{-T}\big(D_\Phi(\log a+2\varepsilon,\log b-3\varepsilon)\cap p^{-1}\mathcal{C}_{N_1,N_2}\smallsetminus D_\Phi^\delta\big)\subseteq A_{\mathcal{C}\smallsetminus U}\times[0,\delta).
\end{equation}
Remark that for any measurable $D\subseteq D_\Phi$, we have $\tilde\mu(D\smallsetminus D_\Phi^\delta)\geq\tilde\mu(D)-\varepsilon$. Now, using mixing and enlarging $L_0$  if needed, for $L\geq L_0$ we get
\begin{equation}\label{eq: estimate using mixing 2}
\delta\cdot\hat\mu\left(A_{\mathcal{C}\smallsetminus U}\right)\geq\delta\cdot\hat\mu\left(\mathcal{C}\smallsetminus U\right)\cdot\Big(\tilde\mu\big(D_\Phi(\log a+2\varepsilon,\log b-3\varepsilon)\cap p^{-1}\mathcal{C}_{N_1,N_2}\big)-2\varepsilon\Big).
\end{equation}
Moreover, by Fubini Theorem, for $(C_{17},C_{18})\in\{(-3,2),(2,-3)\}$,
\begin{equation}\label{eq: estimate Fubini}
\big|\tilde\mu\big(D_\Phi(\log a+ C_{17}\,\varepsilon,\log b+ C_{18}\,\varepsilon)\cap p^{-1}\mathcal{C}_{N_1,N_2}\big)-\tilde\mu\big(D_\Phi(\log a,\log b)\cap p^{-1}\mathcal{C}_{N_1,N_2}\big)\big|\leq C_{19}\,\varepsilon,
\end{equation}
for some $C_{19}>0$.
Finally, by (\ref{eq: estimate using mixing 1}-\ref{eq: estimate Fubini}) we get (\ref{eq: estimate for finite cylinders in C_n^m}) concluding thus the proof of the existence of the limiting distribution.
\end{proof}
\begin{remark}
The set $\mathcal{C}_{N_1,N_2}$ in the previous proof can be replaced by any set of positive $\hat\mu$ measure in the base $D(\hat{R})$.
\end{remark}
Now we give the proof of our Main Theorem as a corollary of Theorem \ref{theorem: renewal for hat q}.
\begin{proof}[Proof of the Main Theorem]
Given $a,b\geq1$, $a<b$, $N_1,N_2\in \N$ and $d_j\in\Omega$, $-N_1<j\leq N_2$, we want to write 
\begin{equation}\label{eq: measure a<q_n_L<b etc}
\mu\left(\left\{\alpha:\:a<\frac{q_{n_L}}{L}<b,\,\omega_{n_L+j}=d_j,\,-N_1<j\leq N_2\right\}\right)
\end{equation}
in terms of analogous quantities for $\frac{\hat{q}_{\hat{n}_L}}{L}$.

Denoting by $\nu=\nu_{\hat{n}_L-1}$, we get $\nu_{\hat{n}_L}=\nu+\tau+1$, where $\tau=\tau\left(R^{\hat{n}_L-1}(\hat\omega^+)\right)$. By construction we have $$\hat q_{\hat{n}_L-1}=q_\nu<q_{n_L}\leq q_{\nu+\tau+1}=\hat q_{\hat{n}_L}$$ and therefore $q_{n_L}=q_{\nu+j}$ for some $1\leq j\leq\tau+1$. Notice that, by definition of $\hat n_L$, we have $0<\frac{\hat q_{\hat n_L-1}}{L}<1$.

We distinguish three cases: \textbf{(i)} $j=\tau+1$, \textbf{(ii)} $j=\tau$ and \textbf{(iii)} $1\leq j\leq \tau-1$. Let us remark that in the first case we have $q_{n_L}=\hat q_{\hat n_L}$. For cases (ii) and (iii), by (\ref{eq: recursion p n and q n}), we observe that
\begin{equation}\nonumber
\begin{bmatrix}
2\,k_{\nu+\tau+1}&\xi_{\nu+\tau}&&&&\\
-1&2\,k_{\nu+\tau}&-1&&&\\
&-1&2&\ddots&&\\
&&-1&\ddots&-1&\\
&&&\ddots&2&-1\\
&&&&-1&2
\end{bmatrix}
\cdot
\begin{bmatrix}
q_{\nu+\tau}\\
q_{\nu+\tau-1}\vspace{-.06cm}\\
\vdots\vspace{-.08cm}\\
q_{\nu+j}\vspace{-.08cm}\\
\vdots\vspace{-.1cm}\\
q_{\nu+2}\\
q_{\nu+1}
\end{bmatrix}
=\begin{bmatrix}
q_{\nu+\tau+1}\vspace{.06cm}\\
0\vspace{-.11cm}\\
\vdots\vspace{0cm}\\
0\vspace{-.11cm}\\
\vdots\\
0\\
q_{\nu}
\end{bmatrix}
\end{equation} 
and therefore $q_{\nu+j}=C^{(1)}\,q_{\nu+\tau+1}+C^{(2)}\,q_{\nu}$, with $C^{(i)}=C^{(i)}(j,k_{\nu+\tau},\xi_{\nu+\tau},k_{\nu+\tau+1})$. It can be shown that $0<C^{(1)},C^{(2)}<1$, except for $j=\tau$ and $\xi_{\nu+\tau}=+1$ when we have $-1<C^{(2)}<0$ (see case (ii') below).

We can assume $N_1\geq2$. Indeed the case $N_1=1$ can be recovered by $N_1=2$ considering the sum over all possible values of $\omega_{n_L-1}$. The values of $d_{-1},d_0\in\Omega$ determine the case (i), (ii) or (iii) we are dealing with:
\begin{center}
\begin{tabular}{|c|c|c|}
\hline
$d_{-1}$&$d_0$&case\\
\hline
\hline
$\neq 1^-$&$\in\Omega$&(i)\\
\hline
\multirow{2}{*}{$=1^-$}&$=m^+$&(ii')\\
&$=m^-\:(m\neq1)$&(ii'')\\
\hline
$=1^-$&$=1^-$&(iii)\\
\hline
\end{tabular}
\end{center}
For each case we can rewrite (\ref{eq: measure a<q_n_L<b etc}) as follows:

\textbf{(i)} There exist $N_1',N_2'\in\N$ such that
\begin{eqnarray}
&&\mu\left(\left\{\alpha:\:a<\frac{q_{n_L}}{L}<b,\,\omega_{n_L+j}=d_j,\,-N_1<j\leq N_2\right\}\right)=\nonumber\\
&&=\sum_{c}\mu\left(\left\{\alpha:\:a<\frac{\hat q_{\hat n_L}}{L}<b,\,\sigma_{\hat n_L+j}=c_j,\,-N_1'<j\leq N_2'\right\}\right),\label{eq: case (i)}
\end{eqnarray}
where the sum is taken over those $c=\{c_j\}\in\Sigma^{N_1'+N_2'}$ that, after being coded into the alphabet $\Omega$, are compatible with the $\{d_j\}\in\Omega^{N_1+N_2}$. Notice that, if $c_j=h_j\cdot m_j^{\pm}\in\Sigma$, $-N_1'<j\leq N_2'$, the coding of $c=\{c_j\}$ into the alphabet $\Omega$ gives us a sequence of length $\sum_{j=-N_1'+1}^{N_2'}(h_j+1)$.

\textbf{(ii')} There exist $N_1',N_2'\in\N$ such that
\begin{eqnarray}
&&\mu\left(\left\{\alpha:\:a<\frac{q_{n_L}}{L}<b,\,\omega_{n_L+j}=d_j,\,-N_1<j\leq N_2\right\}\right)=\nonumber\\
&&=\sum_{c}\mu\left(\left\{\alpha:\:\frac{a+C^{(2)}}{C^{(1)}}<\frac{\hat q_{\hat n_L}}{L}<\frac{b}{C^{(1)}},\,\sigma_{\hat n_L+j}=c_j,\,-N_1'<j\leq N_2'\right\}\right),\label{eq: case (ii')}
\end{eqnarray}
where the sum is taken over those $c=\{c_j\}\in\Sigma^{N_1'+N_2'}$ as in (\ref{eq: case (i)}). Notice that $C^{(1)}$ and $C^{(2)}$ depend on $c$ via $k_{\nu+\tau+1}$, $k_{\nu+\tau}$ and $\xi_{\nu+\tau}$.

\textbf{(ii'') \& (iii)}  There exist $N_1',N_2'\in\N$ such that
\begin{eqnarray}
&&\mu\left(\left\{\alpha:\:a<\frac{q_{n_L}}{L}<b,\,\omega_{n_L+j}=d_j,\,-N_1<j\leq N_2\right\}\right)=\nonumber\\
&&=\sum_{c}\mu\left(\left\{\alpha:\:\frac{a}{C^{(1)}}<\frac{\hat q_{\hat n_L}}{L}<\frac{b-C^{(2)}}{C^{(1)}},\,\sigma_{\hat n_L+j}=c_j,\,-N_1'<j\leq N_2'\right\}\right),\label{eq: case (ii'') and (iii)}
\end{eqnarray}
where the sum is taken over those $c=\{c_j\}\in\Sigma^{N_1'+N_2'}$ as in (\ref{eq: case (i)}), with the further constraint that $C^{(2)}<b-a$.

Now, we want to consider the limit as $L\rightarrow\infty$ in (\ref{eq: case (i)}), (\ref{eq: case (ii')}) and (\ref{eq: case (ii'') and (iii)}). 
Let us show that in each case we have uniform convergence of the series.
Denoting
\begin{eqnarray}
\varphi_c^{(1)}(a)=\begin{cases}
     $a$ & \text{case (i)}, \\
      \frac{a+C^{(2)}}{C^{(1)}}& \text{case (ii')},\\
      \frac{a}{C^{(1)}}& \text{case (ii'') or (iii)},\\
\end{cases}\hspace{.5cm}\mbox{and}\hspace{.5cm}\varphi_c^{(2)}(b)=\begin{cases}
     $b$ & \text{case (i)}, \\
      \frac{b}{C^{(1)}}& \text{case (ii')},\\
      \frac{b-C^{(2)}}{C^{(1)}}& \text{case (ii'') or (iii)},\\
\end{cases}\nonumber
\end{eqnarray}
we get
\begin{eqnarray}
\mu\left(\left\{\alpha:\: \varphi^{(1)}_c(a)<\frac{\hat q_{\hat n_L}}{L}<\varphi^{(2)}_c(b),\,\sigma_{\hat n_L+j}=c_j,\,-N_1'<j\leq N_2'\right\}\right)\leq\nonumber\\
\leq\mu\left(\left\{\alpha:\: R^{\hat n_L(\alpha)-1}(\alpha)\in\mathcal{C}'_{N_1',N_2'}\right\}\right)=\mu\left(\mathcal{C}'_{N_1',N_2'}\right),\nonumber
\end{eqnarray}
where $\mathcal{C}'_{N_1',N_2'}:= R^{N_1'-1}\left(\mathcal{C}[c_{-N_1'+1},\ldots,c_0,\ldots,c_{N_2'}]\right)$. Now, if $c_j=h_j\cdot m_j^{\pm}$, by Remark \ref{remark: measure of cylinders}, we obtain the estimate
\begin{eqnarray}
\mu\left(\mathcal{C}'_{N_1',N_2'}\right)=\mu\left(\mathcal{C}[c_{-N_1'+1},\ldots,c_0,\ldots,c_{N_2'}]\right)\leq C_{20}\,\prod_{j=-N_1'+1}^{N_2'}\frac{1}{(4h_j^2+8h_j+3)\,m_j^2},\nonumber
\end{eqnarray}
for some constant $C_{20}>0$.  Now the series of suprema is controlled as follows:
\begin{eqnarray}
&&\hspace{-.6cm}\sum_{\begin{array}{c}c_i=h_j\cdot m_i^\pm\in\Sigma\\j=-N_1'+1,\ldots,N_2'\end{array}}\sup_{L}\left[\mu\left(\left\{\alpha:\: \varphi^{(1)}_c(a)<\frac{\hat q_{\hat n_L}}{L}<\varphi^{(2)}_c(b),\,\sigma_{\hat n_L+j}=c_j,\,-N_1'<j\leq N_2'\right\}\right)\right]\leq\nonumber\\
&&\leq 2\,C_{20}\:
\sum_{h_{-N_1'+1},\ldots,h_{N_2'}\geq0}\:\:\:\sum_{m_{-N_1'+1},\ldots,m_{N_2'}\geq1}
\:\:\:\prod_{j=-N_1'+1}^{N_2'}\frac{1}{(4h^2_j+8h_j+3)\,m_j^2}\leq C_{21},\nonumber
\end{eqnarray}
for some constant $C_{21}>0$.
Now, because of uniform convergence, we can interchange the limit as $L\rightarrow\infty$ and the series in (\ref{eq: case (i)}), (\ref{eq: case (ii')}) and (\ref{eq: case (ii'') and (iii)}) and by Theorem \ref{theorem: renewal for hat q} we get
\begin{eqnarray}
&&\lim_{L\rightarrow\infty}
\mu\left(\left\{\alpha:\:a<\frac{q_{n_L}}{L}<b,\,\omega_{n_L+j}=d_j,\,-N_1<j\leq N_2\right\}\right)=\nonumber\\
&&=\sum_{c} \mathrm{P}_{N_1',N_2'}'\left(\left(\varphi_c^{(1)}(a),\varphi_c^{(2)}(b)\right)\times\{c_{-N_1'+1}\}\times\cdots\times\{c_0\}\times\ \cdots\times\{c_{N_2'}\}\right)<\infty,\nonumber
\end{eqnarray}
concluding thus the proof of our Main Theorem.
\end{proof}

\section*{Appendix A}
Before giving the proof of Proposition \ref{prop: the flow is mixing}, we present, an useful Lemma and its Corollary. 
\begin{lem}\label{lem: backward convergents}
If $\left\{p_i/q_i\right\}$ are the ECF-convergents of $\alpha=\les(k_1,\xi_1),(k_2,\xi_2),\ldots\res$, then for any $m\geq2$
\begin{eqnarray}
\les(k_m,\xi_{m-1}),(k_{m-1},\xi_{m-2}),\ldots,(k_2,\xi_1),(k_1,*)\res&=&\frac{q_{m-1}}{q_m},\nonumber\\
\les(k_m,\xi_{m-1}),(k_{m-1},\xi_{m-2}),\ldots,(k_3,\xi_2),(k_2,*)\res&=&\frac{p_{m-1}}{p_m}.\nonumber
\end{eqnarray}
\end{lem}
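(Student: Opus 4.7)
The plan is a straightforward induction on $m \geq 2$, driven by two ingredients: the linear recurrences \eqref{eq: recursion p n and q n}, and the observation that prepending a pair $(k_{m+1},\xi_m)$ to a finite ECF-word $\beta = \les (k_m,\xi_{m-1}),\ldots\res$ amounts to the Möbius-like substitution $\beta \mapsto \frac{1}{2k_{m+1}+\xi_m\,\beta}$. Both claimed identities have exactly the same structure, so I will run a single induction producing both at once.

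For the base case $m=2$, a direct computation suffices. Using the initial conditions $q_{-1}=p_0=0$, $p_{-1}=q_0=\xi_0=1$ together with \eqref{eq: recursion p n and q n}, one gets $q_1=2k_1$, $q_2 = 4k_1k_2+\xi_1$, $p_1=1$, $p_2=2k_2$. Plugging in,
$$\les(k_2,\xi_1),(k_1,*)\res \;=\; \frac{1}{2k_2+\xi_1/(2k_1)} \;=\; \frac{2k_1}{4k_1k_2+\xi_1} \;=\; \frac{q_1}{q_2},$$
and $\les(k_2,*)\res = 1/(2k_2) = p_1/p_2$. Note that the symbol ``$*$'' in the last slot is never actually used, since its $\xi$-component would multiply a term beyond the end of the finite expansion; this justifies the convention and will matter in the inductive step only to confirm that truncation is consistent.

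For the inductive step, assume both identities hold for some $m\geq 2$. Set $\beta_m := \les(k_m,\xi_{m-1}),\ldots,(k_1,*)\res$. By the ECF-definition, adjoining $(k_{m+1},\xi_m)$ at the front gives
$$\les(k_{m+1},\xi_m),(k_m,\xi_{m-1}),\ldots,(k_1,*)\res \;=\; \frac{1}{2k_{m+1}+\xi_m\,\beta_m}.$$
Substituting $\beta_m = q_{m-1}/q_m$ from the inductive hypothesis and clearing denominators,
$$\frac{1}{2k_{m+1}+\xi_m\,q_{m-1}/q_m} \;=\; \frac{q_m}{2k_{m+1}q_m+\xi_m q_{m-1}} \;=\; \frac{q_m}{q_{m+1}},$$
where the last equality is exactly the recurrence \eqref{eq: recursion p n and q n} for $q_{m+1}$. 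The argument for the $p$-identity is verbatim the same with $\gamma_m := \les(k_m,\xi_{m-1}),\ldots,(k_2,*)\res = p_{m-1}/p_m$ and the $p$-recurrence $p_{m+1} = 2k_{m+1}p_m + \xi_m p_{m-1}$.

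There is no real obstacle; the only point demanding any attention is bookkeeping of indices and the fact that the innermost entry carries a ``$*$'', which is harmless because the corresponding $\xi$ never enters the finite continued fraction. Both identities thus follow simultaneously by induction.
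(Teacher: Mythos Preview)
Your proof is correct; the induction based on the recurrence \eqref{eq: recursion p n and q n} and the prepending identity $\beta\mapsto(2k_{m+1}+\xi_m\beta)^{-1}$ is exactly the standard argument. The paper in fact omits the proof entirely, remarking only that it can be recovered \emph{mutatis mutandis} from the classical Euclidean case, and your write-up is precisely that adaptation.
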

\begin{cor}\label{cor: backward ecf in terms of forward convergents}
Given $\left\{(k_n,\xi_n)\right\}_{n\in\Z}$, let us consider $m\geq2$, $\beta\in\X$ and $\gamma\in\left[-1,1\right]\smallsetminus\Q$ such that 
\begin{eqnarray}
\beta&\hspace{-.2cm}=&\hspace{-.2cm}\les(k_m,\xi_{m-1}),(k_{m-1},\xi_{m-2}),\ldots,(k_2,\xi_1),(k_1,\xi_0),(k_0,\xi_{-1}),\ldots\res,\nonumber\\
\gamma&\hspace{-.2cm}=&\hspace{-.2cm}\les(0,\xi_0);(k_0,\xi_{-1}),(k_{-1},\xi_{-2}),(k_{-2},\xi_{-3}),\ldots\res,\nonumber
\end{eqnarray}
and let $\left\{p_i/q_i\right\}$ be the ECF-convergents of $\les(k_1,\xi_1),(k_2,\xi_2),(k_3,\xi_3)\ldots\res$.
Then 
\begin{equation}\label{eq: backward ecf in terms of forward convergents}
\beta=\frac{q_{m-1}+p_{m-1}\cdot\gamma}{q_{m}+p_{m}\cdot\gamma}.
\end{equation}
\end{cor}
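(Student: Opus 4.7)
The plan is to prove (\ref{eq: backward ecf in terms of forward convergents}) by induction on $m \geq 2$, viewing the left-hand side as a function $\beta_m(\gamma)$ of the tail parameter $\gamma$. The starting observation is that the nested structure of the continued fraction yields a one-step recursion
$$\beta_{m+1}(\gamma) = \frac{1}{2k_{m+1} + \xi_m \, \beta_m(\gamma)},$$
obtained by peeling off the outermost layer of the fraction defining $\beta_{m+1}$ and recognizing $\beta_m(\gamma)$ as the inner block.

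For the base case $m = 2$, I would directly unfold the two-level fraction $\beta_2(\gamma) = 1/(2k_2 + \xi_1/(2k_1 + \gamma))$, bring it to a single quotient, and identify the coefficients of $1$ and $\gamma$ in the numerator and denominator with $q_1, p_1, q_2, p_2$ using the initial conditions $p_{-1} = q_0 = 1$, $p_0 = q_{-1} = 0$ and the recurrence (\ref{eq: recursion p n and q n}). As a consistency check, Lemma \ref{lem: backward convergents} applied at $\gamma = 0$ already forces the constant terms to match: $\beta_m(0) = q_{m-1}/q_m$.

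For the inductive step, substituting the induction hypothesis $\beta_m(\gamma) = (q_{m-1} + p_{m-1}\gamma)/(q_m + p_m\gamma)$ into the recursion and clearing the inner fraction yields
$$\beta_{m+1}(\gamma) = \frac{q_m + p_m\gamma}{(2k_{m+1}q_m + \xi_m q_{m-1}) + (2k_{m+1}p_m + \xi_m p_{m-1})\gamma},$$
and the two parenthesised expressions are precisely $q_{m+1}$ and $p_{m+1}$ by (\ref{eq: recursion p n and q n}), completing the induction.

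I do not anticipate a genuine obstacle: the entire argument is a routine M\"obius-transformation calculation, and the recurrence (\ref{eq: recursion p n and q n}) is tailored so that the inductive step is essentially a single line. The only stylistic choice is whether to package things into the matrix identity
$$\begin{pmatrix} p_{m-1} & q_{m-1}\\ p_m & q_m\end{pmatrix} = \begin{pmatrix} 0 & 1\\ 1 & 2k_m\end{pmatrix}\prod_{j=m-1}^{1}\begin{pmatrix} 0 & \xi_j\\ 1 & 2k_j\end{pmatrix},$$
so that $\beta_m(\gamma)$ is the M\"obius action of the right-hand product on $\gamma$; since this matrix identity follows from the same induction, I would present the one-line induction directly and merely remark on the matrix interpretation.
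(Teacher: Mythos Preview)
Your induction argument is correct and is precisely the standard route the paper has in mind: the paper explicitly omits the proof, noting only that it ``can be recovered, \emph{mutatis mutandis}, from proofs of the analogous results for Euclidean expansions,'' and your one-step M\"obius recursion combined with (\ref{eq: recursion p n and q n}) is exactly that analogous argument. There is nothing to add.
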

We skip the proofs of Lemma \ref{lem: backward convergents} and Corollary \ref{cor: backward ecf in terms of forward convergents} since they can be recovered, \emph{mutatis mutandis}, from proofs of the analogous results for Euclidean expansions. Now we are ready to give the following
\begin{proof}[Proof of Proposition \ref{prop: the flow is mixing}]\bianco\\
This proof imitates the one given in \cite{Dinaburg-Sinai} (and reviewed in \cite{Sinai-Ulcigrai07}) concerning Euclidean expansions.
In order to prove the statement, we shall perform two steps: 1) construct the global stable and unstable foliations, 2) prove that they form a non-integrable pair (see e.g. \cite{Anosov1967}, Chapter I or \cite{Dinaburg-Sinai}, \S 2.4).
Their non integrability will imply that the Pinsker partition \cite{Sinai-Topics} is trivial and hence $\{\Phi_t\}_t$ is a K-flow and, in particular, it is mixing.

Given $(\hat \omega^{(0)},y_0)\in D_\Phi$, let us construct the local stable and unstable leaves through it, denoted by $\Gamma_{\mbox{\tiny{loc}}}^{(s)}(\hat \omega^{(0)},y_0)$ and  $\Gamma_{\mbox{\tiny{loc}}}^{(u)}(\hat \omega^{(0)},y_0)$ respectively. As a reference, see e.g. \cite{Sinai-Topics}.\\
Since the roof function $\psi(\hat\omega)$ depends only on $(\ldots,\sigma_{-1},\sigma_{0};\sigma_{1},\sigma_2)$, $\sigma_i\in\Sigma$, 
it is clear that the local unstable leaf is given by a piece of segment in the $\hat\omega^+$-direction:
\begin{eqnarray}
\Gamma_{\mbox{\tiny{loc}}}^{(u)}(\hat \omega^{(0)},y_0)\subset\left\{(\hat\omega,y_0):\:\:(\hat R^2\hat\omega)^-=(\hat R^2\hat\omega^{(0)})^-\right\}\subset\left\{(\hat\omega,y_0):\:\:\hat\omega^-=(\hat\omega^{(0)})^-\right\}.\label{eq: local unstable leaf}
\end{eqnarray}
The local stable leaf $\Gamma_{\mbox{\tiny{loc}}}^{(s)}(\hat \omega^{(0)},y_0)$ is given by those pairs $(\hat\omega,y)$ satisfying 
\begin{equation}\label{eq: local stable leaf}
\begin{cases}
     \hat\omega^+=(\hat\omega^{(0)})^+, \\
     y=y_0+\displaystyle{\log\left(\frac{2k_1+(\hat\omega^{(0)})^-}{2k_1+\hat\omega^-}\right)
+\log\left(\frac{1+(\hat\omega^{(0)})^+\cdot\hat\omega^-}{1+(\hat\omega^{(0)})^+\cdot(\hat\omega^{(0)})^-}\right)}.
\end{cases}
\end{equation}
In order to see this, let us denote $(\hat\omega^{(t)},y_t)=\Phi_t(\hat\omega^{(0)},y_0)$ and consider a small segment in the $\hat\omega^-$-direction through it:
\begin{equation}\nonumber
\Gamma^t_{\delta_t}=\left\{(\hat\omega,y_t):\:\:\hat\omega^+=(\hat\omega^{(t)})^+,\:\left|\hat\omega^--(\hat\omega^{(t)})^-\right|<\delta_t\right\},
\end{equation}
where $\delta_t$ is chosen sufficiently small so that
\begin{eqnarray}
\Phi_{-t}\hspace{-.08cm}\left(\Gamma^t_{\delta_t}\right)\hspace{-.08cm}\subset\hspace{-.08cm}\Big\{\hspace{-.05cm}(\hat\omega,y):\:\hat\omega^+=(\hat\omega^{(0)})^+,
\:\left|y-y_0\right|<\delta,\:
0<y_0-\delta<y<\varphi(\hat\eta)-\delta \Big\}\nonumber
\end{eqnarray}
for some $\delta>0$.\\ Now, if $(\hat\omega,y)\in\Phi_{-t}\left(\Gamma^t_{\delta_t}\right)$, then by construction $r(\hat\omega,t)=r(\hat\omega^{(0)},t)=:r(t)$ and, from the definition (\ref{eq: action special flow Phi_t}) of the special flow, $y-S_{r(t)-1}(\psi,\hat R)(\hat\omega)=y_t-t=y_0-S_{r(t)-1}(\psi,\hat R)(\hat\omega^{(0)})$. From (\ref{def: roof function psi}) and (\ref{eq: backward ecf in terms of forward convergents}), because of telescopic cancellations, we get
\begin{eqnarray}
S_{r(t)-1}(\psi,\hat R)(\hat\omega)=\sum_{l=2}^{\nu}\log\les(k_l,\xi_{l-1}),(k_{l-1},\xi_{l-2}),\ldots\res^{-1}=\frac{q_\nu+p_\nu\cdot\hat\omega^-}{q_1+p_1\cdot\hat\omega^-},\nonumber
\end{eqnarray}
where $\nu=\nu_{r(t)-1}$ and $\left\{p_i/q_i\right\}$ are the ECF-convergents of $(\hat\omega^{(0)})^+$.
Similarly we find 
\begin{eqnarray}
S_{r(t)-1}(\psi,\hat R)(\hat\omega^{(0)})=\frac{q_\nu+p_\nu\cdot(\hat\omega^{(0)})^-}{q_1+p_1\cdot(\hat\omega^{(0)})^-}\nonumber
\end{eqnarray}
and therefore, recalling that $p_1=1$ and $q_1=2k_1$, 
\begin{eqnarray}
y&=&y_0+S_{r(t)-1}(\psi,\hat R)(\hat\omega)-S_{r(t)-1}(\psi,\hat R)(\hat\omega^{(0)})=\nonumber\\
&=&y_0+\log\left(\frac{2k_1+(\hat\omega^{(0)})^-}{2k_1+\hat\omega^-}\right)
+\log\left(\frac{1+\frac{p_\nu}{q_\nu}\,\hat\omega^-}{1+\frac{p_\nu}{q_\nu}\,(\hat\omega^{(0)})^-}\right).\label{eq: proof of equation of local stable leaf}
\end{eqnarray}
As $t\rightarrow\infty$, also $\nu\rightarrow\infty$ and $\frac{p_\nu}{q_\nu}\rightarrow(\hat\omega^{(0)})^+$; therefore (\ref{eq: proof of equation of local stable leaf}) leads us to (\ref{eq: local stable leaf}).\\ 
The global unstable and stable leaves are obtained as
\begin{eqnarray}
\hspace{.05cm}\Gamma^{(u)}(\hat\omega^{(0)},y_0)=\bigcup_t\Phi_t\Gamma^{(u)}_{\mbox{\tiny{loc}}}(\hat\omega^{(-t)},y_{-t}),\hspace{.45cm}\Gamma^{(s)}(\hat\omega^{(0)},y_0)=\bigcup_t\Phi_{-t}\Gamma^{(u)}_{\mbox{\tiny{loc}}}(\hat\omega^{(t)},y_{t}).\nonumber
\end{eqnarray}
Let us consider a sufficiently small neighborhood $\mathcal{U}^{(0)}\subset D_\Phi$ of $(\hat\omega^{(0)},y_{0})$. In order to prove the non integrability of the stable and unstable foliations, it is enough to show that, for a set of positive measure of $(\hat\omega,y)\in\mathcal{U}^{(0)}$, $(\hat\omega,y)$ can be connected to $(\hat\omega^{(0)},y_0)$ through a polygonal made of segments of stable and unstable leaves. This is achieved in particular if there exist $(\hat\omega',y'), (\hat\omega'',y'')\in\mathcal{U}^{(0)}$, 
such that $(\hat\omega',y')\in\Gamma^{(s)}(\hat\omega^{(0)},y_0)$, $(\hat\omega'',y'')\in\Gamma^{(u)}(\hat\omega',y')$ and $(\hat\omega,y)\in\Gamma^{(s)}(\hat\omega'',y'')$. Using the explicit local parametrizations (\ref{eq: local unstable leaf}-\ref{eq: local stable leaf}), one can check that these points exist as soon as one can find $(\hat\omega')^-$ and $y'$ such that
\begin{equation}\label{eq: existence of points omega' omega''}
\begin{cases}
      y'=y_0+\displaystyle{\log\left(\frac{2k_1+(\hat\omega^{(0)})^-}{2k_1+(\hat\omega')^-}\right)+\log\left(\frac{1+(\hat\omega^{(0)})^+\cdot(\hat\omega')^-}{1+(\hat\omega^{(0)})^+\cdot(\hat\omega^{(0)})^-}\right)},\\
      y=y'+\displaystyle{\log\left(\frac{2k_1+(\hat\omega')^-}{2k_1+\hat\omega^-}\right)+\log\left(\frac{1+\hat\omega^+\cdot\hat\omega^-}{1+\hat\omega^+\cdot(\hat\omega')^-}\right)}.
\end{cases}
\end{equation}
A direct computation shows that equations (\ref{eq: existence of points omega' omega''}) in the unknowns $(\hat\omega')^-$ and $y'$ can be solved when
\begin{eqnarray}
\frac{\hat\omega^+\left(2k_1+\hat\omega^-\right)\,e^y}{1+\hat\omega^-\cdot\hat\omega^+}\neq\frac{(\hat\omega^{(0)})^+\left(2k_1+(\hat\omega^{(0)})^-\right)\,e^{y_0}}{1+(\hat\omega^{(0)})^-\cdot(\hat\omega^{(0)})^+}\label{eq: equations can be solved when 1},
\end{eqnarray}
and (\ref{eq: equations can be solved when 1}) holds true for $(\hat\omega,y)$ in a subset of $\mathcal{U}^{(0)}$ with positive measure, as desired.
\end{proof}
 
\section*{Acknowledgments}
I would like to thank my advisor Prof. Ya. G. Sinai for his constant guidance and invaluable encouragement. Thanks also to C. Ulcigrai for many fruitful conversations and to the anonymous referee for his/her thorough remarks. Lastly, I would like to dedicate this work to the memory of Riccardo Venier.
\addcontentsline{toc}{chapter}{Bibliography}
\bibliographystyle{plain}
\bibliography{renewal-type-revised-bibliography}
\end{document}